\documentclass[12pt]{amsart}
\usepackage{tikz-cd}
\usepackage{enumitem}
\setlist[description]{leftmargin=\parindent,labelindent=\parindent}
\usetikzlibrary{matrix,arrows,decorations.pathmorphing}
\usepackage{amssymb}
\usepackage{amsmath,amscd}
\usepackage{mathrsfs}
\usepackage{url}
\usepackage{cite}
\usepackage{fullpage}
\usepackage{hyperref}
\usepackage{verbatim}
\usepackage{comment}
\usepackage{fancyvrb}
\usepackage{fvextra}
\usepackage{caption}
\usepackage{colonequals}

\newtheorem{thm}{Theorem}[section]
\newtheorem{prop}[thm]{Proposition}
\newtheorem{lem}[thm]{Lemma}
\newtheorem{cor}[thm]{Corollary}
\newtheorem{conj}[thm]{Conjecture}

\theoremstyle{definition}
\newtheorem{definition}[thm]{Definition}
\newtheorem{example}[thm]{Example}
\newtheorem{rem}[thm]{Remark}
\newtheorem{question}[thm]{Question}

\numberwithin{equation}{section}

\newcommand{\B}{\mathcal{B}}

\newcommand{\zz}{\mathbb{Z}}
\newcommand{\cc}{\mathbb{C}}
\newcommand{\qq}{\mathbb{Q}}

\newcommand{\M}{\mathcal{M}}

\newcommand{\pp}{\mathbb{P}}

\DeclareMathOperator{\cl}{cl}
\newcommand{\et}{\mathrm{\acute{e}t}}

\renewcommand{\O}{\mathcal{O}}

\DeclareMathOperator{\Aut}{Aut}

\DeclareMathOperator{\SL}{SL}

\DeclareMathOperator{\Pic}{Pic}
\DeclareMathOperator{\spec}{Spec}

\DeclareMathOperator{\im}{Im}

\DeclareMathOperator{\tr}{Tr}

\DeclareMathOperator{\Sym}{Sym}

\DeclareMathOperator{\gr}{gr}

\DeclareMathOperator{\BGL}{BGL}

\newcommand{\ff}{\mathbb{F}}
\renewcommand{\hat}{\widehat}

\usepackage{wrapfig}
\renewcommand{\aa}{\mathbb{A}}

\newcommand{\Mb}{\overline{\M}}

\title{Chow rings, cohomology rings, and point counts of moduli spaces of curves}
\author{Hannah Larson}

\begin{document}

\maketitle

\begin{abstract}
In this expository article, we present on state-of-the art results regarding three closely related invariants of moduli spaces of curves: their Chow rings, cohomology rings, and point counts over finite fields. We study the moduli space $\M_{g,n}$, parameterizing smooth genus $g$ curves with $n$ marked points, as well as its compactification by stable curves $\Mb_{g,n}$. After explaining the relationship between these different invariants, we survey what is know regarding the following related questions: When are the Chow rings tautological? When are the cohomology groups tautological? And when are the point counts over fields of size $q$ given by a polynomial in $q$?
\end{abstract}

\section{Introduction}

First conceived of by Riemann in 1857 \cite{Riemann}, the moduli space $\M_g$ of smooth genus $g$ curves is a central object of study in algebraic geometry. Its compactification $\Mb_{g}$, parameterizing stable genus $g$ curves, was introduced by Deligne and Mumford in 1969 \cite{DM}.
The points in the boundary of $\Mb_{g}$ correspond to nodal curves with finite automorphism group. Such curves are built by gluing together smooth curves along marked points. It is therefore natural to study all together the system of moduli spaces $\M_{g,n}$ parameterizing smooth genus $g$ curves with $n$ marked points and their compactifications $\Mb_{g,n}$ parameterizing stable curves of genus $g$ with $n$ marked points.

There are natural maps between these moduli spaces corresponding to gluing
\begin{equation} \label{glue}
\Mb_{g_1,n_1+1} \times \Mb_{g_2,n_2+1} \to \Mb_{g_1+g_2,n_1+n_2} \qquad \text{and} \qquad \Mb_{g-1,n+2} \to \Mb_{g,n}
\end{equation}
and forgetting marked points
\begin{equation} \label{forget}
\Mb_{g,n+1} \to \Mb_{g,n}.
\end{equation}
The boundary of $\Mb_{g,n}$ is stratified according to the topological type of the stable curve, which is encoded in a dual graph $\Gamma$. For example the topological type pictured on the left below corresponds to the dual graph on the right.
\begin{center}
\includegraphics[width=4in]{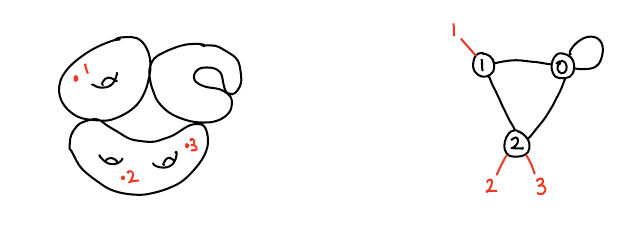}
\end{center}

Given a dual graph $\Gamma$, we define
\begin{equation} \label{gd}
\Mb_{\Gamma} \colonequals \prod_{v \in \Gamma} \Mb_{g_v, n_v}
\end{equation}
where the product ranges over the vertices of $\Gamma$, $g_v$ is the genus of the vertex, and $n_v$ is the number of half-edges incident to the vertex.
The closure of the stratum of curves with topological type $\Gamma$ is the image of the gluing map 
\[\xi_\Gamma \colon \Mb_{\Gamma} \to \Mb_{g,n}\]
given by gluing marked points that are joined by an edge in $\Gamma$. We refer the reader to \cite[Chapter X]{ACG} for further definitions and background on stable curves.

\smallskip
In this article, we study different aspects of the geometry of these moduli spaces of curves, as captured by three closely related invariants.
 The Chow ring, denoted $A^*(\M)$, is an algebraic invariant, capturing information about the algebraic cycles and how they intersect each other. 
The cohomology ring, denoted $H^*(\M)$, is an invariant of the topological space formed by the complex points of the moduli space. Throughout, Chow and cohomology rings will be taken with rational coefficients.
Finally, 
because these moduli spaces are defined over the integers, we may also consider arithmetic invariants. In particular, upon base changing to any finite field $\ff_q$, the moduli space becomes a finite set of points, and we can count the number of such points, denoted $\# \M(\ff_q)$.\footnote{The moduli spaces in question are smooth Deligne--Mumford stacks. For such spaces, one can also define ``stacky" point counts. However, these agree with the usual point counts of their coarse moduli spaces, see Section \ref{sec:glt} for more details.} One can ask to compute each of these invariants for $\M = \M_{g,n}$ or $\M = \Mb_{g,n}$. In Section \ref{sec:rel}, we will explain how these different invariants relate to each other.

The natural maps \eqref{glue} and \eqref{forget} between moduli spaces of curves give rise to a distinguished collection of classes, called tautological classes, which will help to guide our study of their Chow and cohomology rings.

\begin{definition} \label{taut_chow}
The tautological subrings $R^*(\Mb_{g,n}) \subset A^*(\Mb_{g,n})$ are the smallest system of subrings closed under push forward along the gluing and forgetful maps. The tautological subring $R^*(\M_{g,n}) \subset A^*(\M_{g,n})$ is the image of $R^*(\Mb_{g,n})$ under the restriction map $A^*(\Mb_{g,n}) \to A^*(\Mb_{g,n})$.
\end{definition}

\begin{definition}
For either $\M = \Mb_{g,n}$ or $\M_{g,n}$, the
tautological cohomology ring, denoted $RH^*(\M) \subset H^*(\M)$, is the image of $R^*(\M)$ under the cycle class map $A^*(\M) \to H^*(\M)$.
\end{definition}

Compared with the entire Chow or cohomology rings, the tautological subrings are relatively well-understood. They admit additive generators called \emph{decorated boundary strata}. To define them, we first recall the kappa and psi classes. Let $f \colon \Mb_{g,n+1} \to \Mb_{g,n}$ be the map defined by forgetting the last marking and stabilizing. In fact, $f \colon \Mb_{g,n} \to \Mb_{g,n}$ is the universal curve over $\Mb_{g,n}$ and comes 
equipped with $n$ disjoint sections 
\[\sigma_1, \ldots, \sigma_n\colon \Mb_{g,n} \to \Mb_{g,n+1}\]
corresponding to the $n$ markings.
We define
\footnote{To see that the classes $\psi_i$ (and hence also $\kappa_j$) are tautological according to Definition \ref{taut_chow}, we realize the section $\sigma_i$ as a gluing map
$\sigma_i \colon \Mb_{g,n} = \Mb_{g,\{1,\ldots, \hat{i}, \ldots ,n\}} \times \Mb_{0,\{p',i,n+1\}} \to \Mb_{g,n+1}$
which glues $p$ to $p'$. By Definition \ref{taut_chow}, the class $f_*(\sigma_{i*}1)^2$ must be tautological. On the other hand, using the self-intersection formula, we have
$(\sigma_{i*}1)^2 = \sigma_{i*}(c_1(N_{\sigma_i(\Mb_{g,n})/\Mb_{g,n+1})}) = \sigma_{i*}(c_1(\sigma_i^*\omega_f^\vee)) = \sigma_{i*}(-\psi_i)$.
The second equality above follows from identifying the normal bundle to the section with the restriction of the relative tangent bundle to the section. Finally, since $\sigma_i$ is a section of $f$, we have
$R^1(\Mb_{g,n}) \ni f_*(\sigma_{i*}1)^2 = f_*\sigma_{i*}(-\psi_i) = -\psi_i$,
showing that $\psi_i$ is tautological.}
\[\psi_i = c_1(\sigma_i^* \omega_f) \in A^1(\Mb_{g,n}) \qquad \text{and} \qquad \kappa_j = f_*(\psi_{n+1}^{j+1}) \in A^j(\Mb_{g,n}).\]

Given any stable graph $\Gamma$, classes of the following form are tautological by Definition \ref{taut_chow}:
\begin{equation} \label{decst} \xi_{\Gamma*
}\left( \prod_{v \in \Gamma} \mathrm{pr}_v^*(\text{monomial in $\psi_i$ and $\kappa_j$})\right).
\end{equation}
A class of the form \eqref{decst} is called a \emph{decorated boundary stratum}, and can be represented by a decorated stable graph. For example, the decorated stable graph below represents a tautological class in $R^4(\Mb_{6,2})$.
\begin{center}
\includegraphics[width=1.3in]{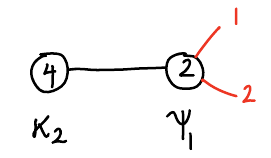}
\end{center}
The name comes from the fact that $\xi_{\Gamma*}(1)$ is the fundamental class of a boundary stratum, but we also allow $\kappa$ and $\psi$ ``decorations". The graph $\Gamma$ is allowed to be the trivial graph with one vertex of genus $g$, which recovers the usual $\psi$ and $\kappa$ classes.
Using excess intersection, an algorithm to express a product of decorated boundary strata as a sum of decorated boundary strata is given in \cite[Appendix A]{GP}. Consequently, one concludes that decorated boundary strata in fact form an additive set of generators for $R^*(\Mb_{g,n})$.
 
There is a systematic set of known relations among decorated boundary strata, known as Pixton's relations, which are conjectured to be all relations \cite{PP, PPZ}. We refer the reader to the excellent survey article \cite{calc} for a description of these tautological relations, their history, and open questions about them.

\medskip
Computing the full Chow ring, cohomology ring, or exact point counts of $\Mb_{g,n}$ or $\M_{g,n}$ are difficult problems which have only been answered in low genera. These low genus results are of interest in their own right, but also play a key role in answering more qualitative questions, in ranges where exact calculation of these invariants seems out of reach. In this article, we will present the current progress on the following related questions.

\begin{question}
For which $g$ and $n$ is the Chow/cohomology ring of $\Mb_{g,n}$ or $\M_{g,n}$ generated by tautological classes?
\end{question}

\noindent
In Section \ref{sec:taut}, we survey a large body of work on this question, which leaves it unanswered in a small, finite number of cases.

\medskip
In our first question above, we fix $g$ and $n$ and ask if the entire Chow/cohomology ring is tautological. A more refined question would be to also fix a codimension in Chow or a cohomological degree.  In Chow, there are relatively few results in this direction. It is known that $ A^1(\Mb_{g,n}) = \Pic(\Mb_{g,n})$ is tautological for all $g$ and $n$. Moreover, if $\Mb_{g,n}$ is rationally connected, then the recent paper \cite[Theorem 1.4]{lu} shows that $A^2(\Mb_{g,n})$ is tautological.
 In Section \ref{sec:coh}, we will present progress on this question when fixing a cohomological degree.

\begin{question}
For which $k, g, n$ is $H^k(\Mb_{g,n})$ generated by tautological classes?
\end{question}

\noindent
Going beyond tautological classes, in Section \ref{sec:coh} we also explain some recent results that describe odd cohomology groups of $\Mb_{g,n}$. These groups are non-tautological, but admit decorated graph generators of a similar flavor.
An understanding of $H^k(\Mb_{g,n})$ can also be used to shed light on the weight $k$ part of the compactly supported cohomology of the interior $\M_{g,n}$, using Deligne's weight spectral sequence. Much progress has been made in this direction as well, see Section \ref{41} for more detailed references.

Finally, in Section \ref{sec:pts}, we connect results on the cohomology of $\Mb_{g,n}$ and $\M_{g,n}$ to their point counts over finite fields. Our motivating qualitative question is the following.

\begin{question}
For which $g, n$ is $\#\Mb_{g,n}(\ff_q)$ or $\#\M_{g,n}(\ff_q)$ a polynomial in $q$?
\end{question}

\noindent
We also explain results of Bergstr\"om--Faber \cite{BF} where exact point counts are known in low genera, including in ranges where they are not polynomial.

\subsection*{Acknowledgements} I am grateful to Samir Canning, Sam Payne, and an anonymous referee for valuable comments on an earlier version of this article. 
This article was written during the period the author served as a Clay Research Fellow.

\section{Relationships between the different invariants} \label{sec:rel}
In this section we collect results comparing the Chow rings, cohomology rings, and point counts of $\Mb_{g,n}$ and $\M_{g,n}$. Specifically, we are interested in the following invariants, and aim to explain connections between them as indicated by edges below:
\begin{center}
\begin{tikzcd}
A^*(\Mb_{g,n}) \arrow[dash]{d} \arrow[dash]{r} & A^*(\M_{g,n})  \arrow[dash]{d} \\
 H^*(\Mb_{g,n}) \arrow[dash]{d} \arrow[dash]{r} & H^*(\M_{g,n}) \text{ or } H^*_c(\M_{g,n}) \arrow[dash]{d} \\
\#\Mb_{g,n}(\ff_q) \arrow[dash]{r} & \#\M_{g,n}(\ff_q).
\end{tikzcd}
\end{center}
Since $\M_{g,n}$ is smooth, $H^*(\M_{g,n})$ and $H^*_c(\M_{g,n})$ are dual to each other.

In nice situations, the invariants in the left column all line up. For example, as we shall see, if $A^*(\Mb_{g,n}) = R^*(\Mb_{g,n})$, then the cycle class map is an isomorphism and $\#\Mb_{g,n}(\ff_q)$ is a polynomial in $q$ whose coefficients are the dimensions of the even cohomology groups. However, a similar statement does not hold for $\M_{g,n}$.
While reading this section, it may help to keep in mind the following simple example.

\begin{example} \label{proj}
Projective space $\pp^n$ is smooth and proper over $\spec \zz$.
The Chow and cohomology rings of $\pp^n$ are both generated by the first Chern class of the tautological line bundle, denoted $\zeta = c_1(\O_{\pp^n}(1))$. In fact, we have
\[A^*(\pp^n) \cong H^*(\pp^n) = \qq[\zeta]/(\zeta^{n+1}).\]
By writing $\pp^n = \aa^n \sqcup \aa^{n-1} \sqcup \cdots \sqcup \aa^1 \sqcup \aa^0$,
one also sees that
\[\#\pp^n(\ff_q) = q^n + q^{n-1} + \ldots + q + 1,\]
which is a polynomial in $q$. Notice that the odd cohomology of $H^*(\pp^n)$ vanishes and the ranks of the even degree cohomology groups are the coefficients appearing in the polynomial above. This is an example of how the invariants line up nicely for a smooth, proper space.

Now consider $X = \pp^2 \smallsetminus E$ for $E$ an elliptic curve. By excision, $A^*(X)$ is still generated by $\zeta$. In fact, since we work with rational coefficients, $A^*(X) = \qq$. However, the Chow and cohomology rings are no longer isomorphic. Moreover,
\[\#X(\ff_q) = \#\pp^2(\ff_q) - \#E(\ff_q)\]
is not a polynomial, as $ \#E(\ff_q)$ is not a polynomial. Although the Chow ring of $X$ is still relatively simple, its cohomology and point counts are more sensitive to the removal of $E$. In general, we will see that comparing the different invariants is less straightforward for spaces that are not proper.

\end{example} 
\subsection{Comparing Chow of $\Mb_{g,n}$ and $\M_{g,n}$}
The excision sequence for Chow groups says there is a right-exact sequence
\[A^{*-1}(\partial \M_{g,n}) \to A^*(\Mb_{g,n}) \to A^*(\M_{g,n}) \rightarrow 0,\]
where $\partial \M_{g,n} \colonequals \Mb_{g,n} \smallsetminus \M_{g,n}$ denotes the boundary.
Thus, by Definition \ref{taut_chow}, we immediately see the following.
\begin{lem} \label{easy}
If $A^*(\Mb_{g,n}) = R^*(\Mb_{g,n})$ then $A^*(\M_{g,n}) = R^*(\M_{g,n})$.
\end{lem}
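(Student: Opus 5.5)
The plan is to use only the right-exact excision sequence just displayed, together with the definition of $R^*(\M_{g,n})$ as the image of $R^*(\Mb_{g,n})$ under restriction (Definition~\ref{taut_chow}). The key input is the surjectivity of the restriction map
\[ j^* \colon A^*(\Mb_{g,n}) \to A^*(\M_{g,n}), \]
where $j \colon \M_{g,n} \hookrightarrow \Mb_{g,n}$ is the open inclusion. This is the right-hand end of the exact sequence $A^{*-1}(\partial \M_{g,n}) \to A^*(\Mb_{g,n}) \to A^*(\M_{g,n}) \to 0$.

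Concretely, I would argue as follows. Let $\alpha \in A^*(\M_{g,n})$ be arbitrary. By surjectivity of $j^*$, choose $\beta \in A^*(\Mb_{g,n})$ with $j^*\beta = \alpha$. By hypothesis $A^*(\Mb_{g,n}) = R^*(\Mb_{g,n})$, so $\beta$ is tautological. Hence
\[ \alpha = j^*\beta \in j^*R^*(\Mb_{g,n}) = R^*(\M_{g,n}). \]
Since $R^*(\M_{g,n}) \subset A^*(\M_{g,n})$ by definition, this gives equality.

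The only point needing care, and the only plausible obstacle, is that $\M_{g,n}$ and $\Mb_{g,n}$ are Deligne--Mumford stacks rather than schemes. So one should confirm that the excision sequence, and in particular the surjectivity of restriction to an open substack, holds for rational Chow groups of such stacks. I would justify this by appealing to Vistoli's (or Kresch's) intersection theory on DM stacks with $\qq$-coefficients, where excision holds exactly as for schemes. Alternatively, one can pass to coarse moduli spaces, whose rational Chow groups agree with those of the stack, and apply excision for schemes there. Since the paper has already stated the excision sequence in this setting, I would simply take it as given. The proof is then the two-line argument above, and the boundary term $A^{*-1}(\partial\M_{g,n})$ plays no role beyond the exactness statement.
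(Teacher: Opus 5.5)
Your proposal is correct and is essentially the paper's argument: the paper deduces the lemma immediately from the surjectivity of restriction $A^*(\Mb_{g,n}) \to A^*(\M_{g,n})$ in the excision sequence, together with the definition of $R^*(\M_{g,n})$ as the image of $R^*(\Mb_{g,n})$. Your remark that excision holds for rational Chow groups of Deligne--Mumford stacks is a reasonable check; the paper simply takes it as given.
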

The converse need not hold. For a concrete example, we will see in Section \ref{sec:taut} that
$A^*(\M_{2,10}) = R^*(\M_{2,10})$ but $A^*(\Mb_{2,10}) \neq R^*(\Mb_{2,10})$.

\smallskip
Nevertheless, the excision sequence shows that if
$A^*(\M_{g,n}) = R^*(\M_{g,n})$ and all classes in the image of $A^{*-1}(\partial \M_{g,n}) \to A^*(\Mb_{g,n}) $ are tautological, then
$A^*(\Mb_{g,n}) = R^*(\Mb_{g,n})$. 
Let $\widetilde{\partial \M_{g,n}}$ denote the normalization of the boundary.
Since push forward along proper surjective maps is surjective on Chow groups, the
image of  $A^{*-1}(\partial \M_{g,n}) \to A^*(\Mb_{g,n})$ is the same as the image of $A^{*-1}(\widetilde{\partial \M_{g,n}}) \to A^*(\Mb_{g,n})$.
Moreover, $\widetilde{\partial \M_{g,n}}$ is the disjoint union of all possible gluing maps
\[\Mb_{g_1,n_1+1} \times \Mb_{g_2,n_2+1} \to \Mb_{g_1+g_2,n_1+n_2} \qquad \text{and} \qquad \Mb_{g-1,n+2} \to \Mb_{g,n}.\]
At this point, to compute the Chow ring of $\widetilde{\partial \M_{g,n}}$,
we need to understand the Chow ring of a product of two moduli spaces. In general, the K\"unneth formula does not hold for Chow rings; instead, we make the following definition.
\begin{definition}
We say $X$ has the \emph{Chow--K\"unneth generation Property (CKgP)} if for all $Y$, the tensor product map 
\[A^*(X) \otimes A^*(Y) \to A^*(X \times Y)\]
is surjective.
\end{definition}
The CKgP was introduced in work of Totaro \cite{Totaro} and Bae--Schmitt \cite{BS}, and plays a key role in the author's work with Canning \cite{ckgp} on the Chow rings of moduli spaces of curves. The following result
illustrates what additional input may yield a converse to Lemma \ref{easy}.

\begin{lem}[Lemma 4.1 of \cite{ckgp}] \label{assemble}
Let $g$ and $n$ be given. Suppose $A^*(\M_{g',n'}) = R^*(\M_{g',n'})$ and $\M_{g',n'}$ has the CKgP for all $g',n'$ with $g' \leq g$ and $2g' + n' \leq 2g + n$. Then $A^*(\Mb_{g,n}) = R^*(\Mb_{g,n})$ and $\Mb_{g,n}$ has the CKgP.
\end{lem}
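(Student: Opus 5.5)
We argue by induction on $2g+n$ (with $g$ bounded), using the stratification of $\Mb_{g,n}$ by stable graphs. The excision sequence for the closed stratum of a fixed graph and its open complement gives, in general, that if $X$ is stratified by finitely many locally closed pieces $S_i$, then $A^*(X)$ is generated by pushforwards of classes from the closures $\overline{S_i}$, and in fact from $A^*(S_i)$ lifted to closures. Concretely, $\Mb_{g,n}$ is the disjoint union of the open strata $\M_\Gamma^\circ$, one for each stable graph $\Gamma$ of type $(g,n)$. Each such stratum is the quotient $\bigl(\prod_{v}\M_{g_v,n_v}\bigr)/\Aut(\Gamma)$. Since we use rational coefficients, its Chow ring is the $\Aut(\Gamma)$-invariants of $A^*\bigl(\prod_v \M_{g_v,n_v}\bigr)$.

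\textbf{Step 1: the open strata.} Every vertex satisfies $g_v\le g$ and $2g_v+n_v\le 2g+n$. This holds because $2g-2+n=\sum_v(2g_v-2+n_v)$ and each summand is positive. So by hypothesis each $\M_{g_v,n_v}$ has tautological Chow ring and the CKgP.

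\textbf{Step 2: products.} The CKgP is stable under products: if $X$ and $X'$ have it, then for any $Y$,
\[
A^*(X)\otimes A^*(X')\otimes A^*(Y)\twoheadrightarrow A^*(X)\otimes A^*(X'\times Y)\twoheadrightarrow A^*(X\times X'\times Y).
\]
Hence $\prod_v\M_{g_v,n_v}$ has the CKgP, and its Chow ring is generated by exterior products of tautological classes.

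\textbf{Step 3: pushing forward.} Each tautological class on $\M_{g_v,n_v}$ is the restriction of a tautological class on $\Mb_{g_v,n_v}$. So any class on the open stratum extends to a class on $\Mb_\Gamma$ that is an exterior product of tautological classes. Its pushforward under $\xi_\Gamma$ is tautological, because decorated strata, and more generally pushforwards of products of tautological classes, are tautological. The excision sequence for the stratification gives
\[
A^*(\Mb_{g,n})=\sum_\Gamma \xi_{\Gamma*}\bigl(\text{lifts of }A^*(\M_\Gamma^\circ)\bigr).
\]
To make this precise, induct over strata ordered by dimension: remove the closed union of the deepest strata and use right exactness. This yields $A^*(\Mb_{g,n})=R^*(\Mb_{g,n})$.

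\textbf{Step 4: the CKgP for $\Mb_{g,n}$.} Run the same excision argument on $\Mb_{g,n}\times Y$, stratified by $\M_\Gamma^\circ\times Y$. By Step 2, $A^*(\M_\Gamma^\circ\times Y)$ is spanned by classes $\alpha\otimes\beta$ with $\alpha$ pulled back from $\prod_v\M_{g_v,n_v}$ and then invariantized. Lifting $\alpha$ to $\Mb_\Gamma$ and pushing forward under $\xi_\Gamma\times\mathrm{id}$ gives $\xi_{\Gamma*}(\tilde\alpha)\otimes\beta$, which lies in the image of $A^*(\Mb_{g,n})\otimes A^*(Y)$.

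\textbf{Main obstacle.} The main obstacle is making the stratification excision argument rigorous. Two points need care. First, the lift to the closure must only be correct modulo classes supported on deeper strata, and those are handled by induction on strata; this is where the ordering matters. Second, the quotient by $\Aut(\Gamma)$ must be handled, which rational coefficients make harmless via invariants and pushforward from $\prod_v\M_{g_v,n_v}$.
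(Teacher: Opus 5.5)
Your proof is correct, but it is organized differently from the paper's.

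\textbf{The paper's route.} The paper inducts on $(g,n)$ and uses only the boundary divisors. The normalization of $\partial\M_{g,n}$ is the disjoint union of $\Mb_{g-1,n+2}$ and the products $\Mb_{g_1,n_1+1}\times\Mb_{g_2,n_2+1}$. The paper applies the lemma's \emph{conclusion} (tautological Chow ring and CKgP for the compactified spaces) to these smaller spaces. The CKgP of one factor then shows that every class supported on the boundary is a pushforward of exterior products of tautological classes. A single excision against the interior $\M_{g,n}$ finishes the Chow statement. The CKgP of $\Mb_{g,n}$ is left to the general principle that the CKgP is compatible with stratifications and surjections.

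\textbf{Your route.} You use the full stratification by stable graphs, with open strata $[\prod_v\M_{g_v,n_v}/\Aut(\Gamma)]$. You apply the \emph{hypothesis} on the open spaces directly to every vertex. This is legitimate because $\sum_v(2g_v-2+n_v)=2g-2+n$ with all summands positive. As a consequence, the only induction you actually use is the descending one over the codimension of strata. Your opening ``induction on $2g+n$'' is therefore superfluous. The key compatibility, which you use implicitly, is that $\xi_\Gamma^{-1}$ of the union of codimension-$|E(\Gamma)|$ strata is exactly $\M_\Gamma$. This holds because a curve in the image of $\Mb_\Gamma$ with exactly $|E(\Gamma)|$ nodes has dual graph $\Gamma$. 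It guarantees that $\xi_{\Gamma*}\tilde\alpha$ restricts to the prescribed class on that stratum.

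\textbf{What each buys.} Your approach gives a self-contained proof of the CKgP half, making explicit its stability under products, finite quotients with $\qq$-coefficients, and stratifications. It also shows transparently why the range $g'\le g$, $2g'+n'\le 2g+n$ is exactly what is needed. The paper's approach is shorter and reuses the recursive structure of the boundary. The cost is that it invokes the statement for the smaller $\Mb_{g',n'}$ and cites the CKgP bookkeeping rather than proving it.
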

\begin{proof}
The boundary of $\Mb_{g,n}$ is surjected onto by the disjoint union of $\Mb_{g_1,n_1+1} \times \Mb_{g_2,n_2+1}$ and $ \Mb_{g-1,n+2}$. Each of the moduli spaces involved has either genus or number of markings smaller than $g$ or $n$. Thus, we can inductively assume that they all have the CKgP and tautological Chow rings. The CKgP implies that the Chow ring of $\Mb_{g_1,n_1+1} \times \Mb_{g_2,n_2+1}$ is generated by pullbacks of tautological classes from the two factors. Since the tautological rings are closed under pushforward along gluing maps, it follows that all classes supported on the boundary of $\Mb_{g,n}$ are tautological, which implies $A^*(\Mb_{g,n}) = R^*(\Mb_{g,n})$. To see that $\Mb_{g,n}$ also has the CKgP, we use the fact that the CKgP plays well with stratifications and surjective maps, see \cite[Lemma 4.1]{ckgp} for more details.
\end{proof}

\begin{rem}
When $g$ and $n$ are small, the moduli spaces $\M_{g,n}$ are unirational, and many of them can be studied very explicitly.
More precisely, in the cases where it has been shown that $A^*(\M_{g,n}) = R^*(\M_{g,n})$, it is because the moduli space can be stratified into pieces, each of which is surjected onto by a space whose Chow ring is not hard to calculate, such as an open substack of a Grassmann bundle over $\BGL_r$. Such tractable spaces tend to have the CKgP. Moreover, the CKgP plays well with stratifications and surjective maps, so an argument that  $A^*(\M_{g,n}) = R^*(\M_{g,n})$ tends to also show
that $\M_{g,n}$ has the CKgP, cf Remark \ref{rem:ckgp}. When $g$ and $n$ are large however, the moduli spaces $\M_{g,n}$ are of general type, so in particular do not admit such nice unirational parameterizations. While this means the techniques used for small $g$ and $n$ necessarily break down for large $g$ and $n$, we do not know of direct arguments in general for the existence of non-tautological classes or failure of CKgP just based on the birational geometry of the moduli space.
\end{rem}

\subsection{Comparing Chow and cohomology} \label{sec:cl}
The cycle class map $\cl: A^*(\M) \to H^*(\M)$ is a ring homomorphism whose image lies in even degree. In general, the cycle class map need not be injective or surjective, but we have the following lemmas.

\begin{lem}[Lemma 3.11 of \cite{ckgp}]
If $X$ is a smooth, proper Deligne--Mumford stack over $\cc$ and $X$ has the CKgP, then $\cl$ is an isomorphism.
\end{lem}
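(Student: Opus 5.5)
The plan is to apply the CKgP to $Y = X$ and use the diagonal: a Bloch--Srinivas style decomposition of the diagonal, followed by its action as a correspondence. Let $d = \dim X$. By the CKgP for $X$ (taking $Y = X$), the exterior product map $A^*(X)\otimes A^*(X) \to A^*(X\times X)$ is surjective. In particular, the class of the diagonal can be written as
\[
[\Delta] = \sum_i \alpha_i \times \beta_i \in A^d(X \times X), \qquad \alpha_i, \beta_i \in A^*(X).
\]
Applying the cycle class map, which is compatible with exterior products, gives $\cl[\Delta] = \sum_i \cl(\alpha_i)\times \cl(\beta_i)$ in $H^{2d}(X\times X)$. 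Since $X$ is smooth and proper, cohomology with rational coefficients satisfies Poincar\'e duality and intersection theory on Deligne--Mumford stacks with $\qq$-coefficients behaves as for schemes (Vistoli). Hence $\Delta$ acts as the identity correspondence on both $A^*(X)$ and $H^*(X)$.

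For surjectivity, take $\gamma \in H^*(X)$. The correspondence action gives
\[
\gamma = \mathrm{pr}_{2*}\big(\cl[\Delta]\cdot \mathrm{pr}_1^*\gamma\big) = \sum_i \Big(\int_X \gamma\cdot \cl(\alpha_i)\Big)\, \cl(\beta_i),
\]
which lies in the image of $\cl$. In particular, odd cohomology vanishes.

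For injectivity, take $z \in A^*(X)$ with $\cl(z)=0$. The same computation in Chow gives
\[
z = \sum_i \deg(z\cdot \alpha_i)\,\beta_i .
\]
The degree of a zero-cycle is detected by cohomology, since $\deg(z\cdot\alpha_i) = \int_X \cl(z)\cl(\alpha_i) = 0$. Hence $z = 0$.

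The main technical point is justifying the correspondence formalism on a proper smooth DM stack with rational coefficients. This includes the identity $\mathrm{pr}_{2*}([\Delta]\cdot \mathrm{pr}_1^* z) = z$ and the compatibility of $\cl$ with pullback, pushforward, products and degree. I would handle this by citing Vistoli's intersection theory on stacks and the corresponding properties of rational cohomology of the coarse space, or of the stack itself. I would also note that properness is essential: it is needed so that $\mathrm{pr}_{2*}$ is defined and degrees make sense. Without properness the statement indeed fails, as the example $X = \pp^2 \smallsetminus E$ above shows.
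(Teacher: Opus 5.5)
Your proposal is correct and follows the same approach as the paper. The paper applies the CKgP with $Y = X$ to obtain an algebraic decomposition of the diagonal, and you make explicit the correspondence computation it leaves implicit, which gives both surjectivity and injectivity of $\cl$; read $[\Delta]$ as $\Delta_*[X]$, since the diagonal of a DM stack need not be a closed embedding.
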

\begin{proof}
Since $X$ has the CKgP, the tensor product map $A^*(X) \otimes A^*(X) \to A^*(X \times X)$ is surjective. In particular,
we obtain an algebraic decomposition of the class of the diagonal in $X \times X$, which implies that the cycle class map is an isomorphism.
\end{proof}

In Example \ref{proj}, $\pp^n$ has the CKgP.

\smallskip
Somewhat surprisingly, another sufficient criterion for the cycle class map to be an isomorphism is that the Chow ring is a countable $\qq$ vector space. This is essentially a consequence of work of Kimura and Totaro. The precise references and form stated below can be found in \cite[Lemma 7.1]{h11}.
\begin{lem}[Lemma 7.1 of \cite{h11}] \label{cl_iso}
If $X$ is a smooth, proper Deligne--Mumford stack over $\cc$ and $A^*(X)$ is a countable $\qq$ vector space, then $\cl$ is an isomorphism.
\end{lem}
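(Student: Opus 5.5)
The plan is to reduce to the diagonal-decomposition argument used in the proof of the preceding lemma (Lemma 3.11 of \cite{ckgp}), replacing the CKgP with a Bloch--Srinivas style spreading-out argument that uses only the countability hypothesis. Let $X$ be smooth, proper, of dimension $d$. We work with rational coefficients throughout, so we may pass to a coarse moduli space or a finite cover by a scheme; intersection theory on $X$ behaves like that of a smooth proper variety.

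\textbf{Step 1: triviality of Chow groups over extensions.} First I would show that $A^*(X) \to A^*(X_{\Omega})$ is surjective for every algebraically closed extension $\Omega \supset \cc$. Since $X$ is defined over a countable subfield $k_0 \subset \cc$, any cycle on $X_\Omega$ is defined over a finitely generated extension, i.e.\ over the function field of some variety $T$ over $\cc$. Specializing to the $\cc$-points of $T$ produces a family of cycles on $X$. These are uncountably many, but $A^*(X)$ is countable, so the classes must be constant on a very general locus. A standard argument (Kimura, Totaro; this is essentially the content of the cited \cite[Lemma 7.1]{h11}) then shows that the generic cycle is rationally equivalent to a constant cycle, up to cycles supported over a proper closed subset of $T$.

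\textbf{Step 2: decomposing the diagonal.} Apply Step 1 with $\Omega = \overline{\cc(X)}$ to the generic point $\eta$ of $X$, and more generally to the classes on $X_{\cc(X)}$ obtained by restricting the diagonal. Running this inductively over a stratification, in the manner of Bloch--Srinivas and Laterveer, gives a decomposition
\[[\Delta_X] = \sum_i [Z_i \times W_i] \quad \text{in } A^d(X\times X)\]
into products of cycles. Equivalently, $A^*(X)\otimes A^*(X) \to A^*(X\times X)$ hits $[\Delta_X]$.

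\textbf{Step 3: conclusion.} With this decomposition in hand, I would conclude exactly as in the previous lemma. Letting $[\Delta_X]$ act as a correspondence on $H^*(X)$, it acts as the identity, but it also factors through the classes $\cl(Z_i)$ and $\cl(W_i)$. Hence $H^*(X)$ is spanned by algebraic classes, which gives surjectivity of $\cl$. For injectivity, take $\alpha$ with $\cl(\alpha)=0$. Acting on $A^*(X)$, the correspondence gives $\alpha = \sum_i \deg(\alpha\cdot W_i)\, Z_i$, and each degree $\deg(\alpha\cdot W_i)$ is computed cohomologically, so each vanishes. Therefore $\alpha = 0$.

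The main obstacle is Step 2. Step 1 only yields the decomposition one piece at a time: first the point class, $\Delta$ restricted to $X \times \eta$, then successively larger strata. Upgrading this to a full decomposition into products needs the countability hypothesis in every codimension, which is why we need all of $A^*(X)$ countable. I would handle the induction by following Laterveer's argument, or I would simply cite Kimura's theorem that such $X$ has finite-dimensional motive of Tate type.
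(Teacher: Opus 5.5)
Your proposal is correct, and it is essentially the argument behind the paper's appeal to Kimura and Totaro. Countability of $A^*(X)$ makes the Chow groups insensitive to algebraically closed field extensions. The routine Bloch--Srinivas/Laterveer induction on the codimension of the support then gives a decomposition of the diagonal into products, so your Step 2 is less of an obstacle than you expect. From there one concludes exactly as in the proof of the preceding lemma.

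The one correction is in the reduction from stacks. Pass to the coarse moduli space, whose rational Chow groups agree with those of $X$, rather than to a finite cover by a scheme. Countability of Chow groups is not inherited by finite covers: for example, $\pp^2$ has countable Chow groups, but a K3 double cover of $\pp^2$ does not.
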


Because decorated boundary strata generate $R^*(\Mb_{g,n})$, the ring is finitely generated. Thus, we have the following corollary.
\begin{cor} \label{rhcor}
If $A^*(\Mb_{g,n}) = R^*(\Mb_{g,n})$, then $H^*(\Mb_{g,n}) = RH^*(\Mb_{g,n})$.
\end{cor}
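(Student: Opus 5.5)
The plan is to combine Lemma \ref{cl_iso} with the finite generation of the tautological ring, and then read off the conclusion from the definition of $RH^*$.

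First, I note that $\Mb_{g,n}$ is a smooth, proper Deligne--Mumford stack over $\cc$ (after base change from $\zz$). This puts us in the setting of Lemma \ref{cl_iso}.

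Second, I check the countability hypothesis. By the discussion following \eqref{decst}, $R^*(\Mb_{g,n})$ is additively spanned by decorated boundary strata. There are only finitely many stable graphs $\Gamma$ of type $(g,n)$. In each fixed codimension there are only finitely many monomials in $\psi$ and $\kappa$ classes on the vertices, and in codimension above $\dim \Mb_{g,n} = 3g-3+n$ everything vanishes. Hence $R^*(\Mb_{g,n})$ is a finite-dimensional $\qq$-vector space, and in particular countable. Under the hypothesis $A^*(\Mb_{g,n}) = R^*(\Mb_{g,n})$, the whole Chow ring is therefore countable over $\qq$.

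Third, Lemma \ref{cl_iso} now gives that $\cl \colon A^*(\Mb_{g,n}) \to H^*(\Mb_{g,n})$ is an isomorphism, and in particular surjective. This yields
\[
H^*(\Mb_{g,n}) = \cl(A^*(\Mb_{g,n})) = \cl(R^*(\Mb_{g,n})) = RH^*(\Mb_{g,n}),
\]
where the last equality is the definition of the tautological cohomology ring. As a byproduct, the odd cohomology vanishes.

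The only step requiring care is the second, namely confirming finite (hence countable) dimensionality of $R^*$. This rests on the fact, quoted from \cite[Appendix A]{GP}, that decorated boundary strata span $R^*(\Mb_{g,n})$ additively, together with the finiteness of stable graphs and of decorations in each degree. The rest is direct.
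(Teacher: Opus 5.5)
Your proposal is correct and follows the paper's own argument: under the hypothesis, $A^*(\Mb_{g,n}) = R^*(\Mb_{g,n})$ is spanned by the finitely many decorated boundary strata in each codimension, so it is countable. Lemma \ref{cl_iso} then makes $\cl$ an isomorphism, and you conclude $H^*(\Mb_{g,n}) = \cl(R^*(\Mb_{g,n})) = RH^*(\Mb_{g,n})$.
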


We note that the analogue of Corollary \ref{rhcor} is not true for $\M_{g,n}$. For example, we have that $A^*(\M_{0,4}) = R^*(\M_{0,4}) = \qq$ but $H^1(\M_{0,4}) \neq 0$, so $H^*(\M_{0,4}) \neq RH^*(\M_{0,4})$.

\begin{rem}
For $\M = \Mb_{g,n}$ or $\M_{g,n}$, it is conjectured that Pixton's relations are complete in Chow in \cite[Conjecture 2]{Pixton}. It has also been conjectured that Pixton's relations are complete in cohomology. This conjecture would imply in particular that 
the cycle class map restricts to an isomorphism on $R^*(\M) \to RH^*(\M)$. While this is known to hold in some cases, such as in the situation of Corollary \ref{rhcor}, it remains open in general.
\end{rem}

For the open moduli spaces $\M_{g,n}$, the cycle class map is rarely an isomorphism. We write $W_k H^i(X)$ for the weight $k$ part of Deligne's weight filtration. The image of the cycle class map is always contained in the lowest weight subspace $W_kH^k(X)$. 
We say that the cohomology is \emph{Tate type} if its semisimplification is a sum of Tate twists (see Section \ref{sec:glt}).
The following lemma can be thought of as a weaker replacement of Lemma \ref{cl_iso} for spaces which are not necessarily proper.
\begin{lem}[Lemma 4.3 of \cite{ste}] \label{open-ckgp}
Let $X$ be an open substack of a smooth, proper Deligne--Mumford stack over $\cc$.
If $X$ has the CKgP, then
\[\cl: A^*(X) \to \bigoplus_{k} W_k H^k(X)\]
is surjective. In particular, if $k$ is odd, then $W_k H^k(X) = 0$ and if $k$ is even, then $W_k H^k(X)$ is Tate type.
\end{lem}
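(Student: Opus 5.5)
The plan is to compactify and use the diagonal trick from the proof of Lemma 3.11 of \cite{ckgp}, applied to $\overline{X}\times X$ rather than $X\times X$. Let $\overline{X}$ be a smooth, proper Deligne--Mumford stack containing $X$ as an open substack, with complement $Z$. First I will use the CKgP of $X$ with $Y=\overline{X}$: the map $A^*(\overline{X})\otimes A^*(X)\to A^*(\overline{X}\times X)$ is surjective. Let $\Gamma\subset \overline{X}\times X$ be the graph of the inclusion $j\colon X\hookrightarrow \overline{X}$. It is closed, since it is the pullback of the diagonal of the separated stack $\overline{X}$. Then
\[[\Gamma]=\sum_i a_i\otimes b_i, \qquad a_i\in A^*(\overline{X}),\ b_i\in A^*(X),\]
in $A^*(\overline{X}\times X)$.

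Next I will apply the cycle class map and let $[\Gamma]$ act as a correspondence from $H^*(\overline{X})$ to $H^*(X)$. Concretely, for $\alpha\in H^*(\overline{X})$ set $\alpha\mapsto p_{2*}(p_1^*\alpha\cdot \cl[\Gamma])$. The pushforward $p_2$ is proper because $\overline{X}$ is proper, so this is defined on ordinary cohomology of $X$. Using the graph, this operation is $j^*$. Using the decomposition, it is $\sum_i\big(\int_{\overline{X}}\alpha\cdot \cl(a_i)\big)\,\cl(b_i)$. Hence the image of $j^*\colon H^*(\overline{X})\to H^*(X)$ lies in the image of $\cl\colon A^*(X)\to H^*(X)$.

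Then I will invoke Deligne's theory. For $\overline{X}$ smooth and proper (the coarse space is proper and the cohomology is rationally that of a smooth proper stack), $H^k(\overline{X})$ is pure of weight $k$. Moreover, the image of $j^*\colon H^k(\overline{X})\to H^k(X)$ is exactly $W_kH^k(X)$. Combining this with the previous step gives surjectivity of $\cl$ onto $\bigoplus_k W_kH^k(X)$. The image of $\cl$ lies in even degrees and consists of Tate classes $\qq(-k/2)$. Therefore, for $k$ odd, $W_kH^k(X)=0$, and for $k$ even, $W_kH^k(X)$ is spanned by algebraic classes and so is of Tate type.

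I expect the main obstacle to be justifying the weight statement $\im(j^*)=W_kH^k(X)$ in the Deligne--Mumford stack setting. I would reduce it to coarse spaces with rational coefficients, or alternatively cite Deligne's Hodge II/III, which give $W_kH^k(X)=\im(H^k(\overline{X})\to H^k(X))$ for any smooth compactification. A secondary check is that the correspondence computation goes through with the projection formula for stacks using rational coefficients.
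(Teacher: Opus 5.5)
Your proposal is correct and is essentially the argument behind \cite[Lemma 4.3]{ste}. The CKgP of $X$ with $Y=\overline{X}$ decomposes the class of the graph of $j\colon X\hookrightarrow\overline{X}$ in $A^*(\overline{X}\times X)$, letting it act as a correspondence gives $\im\bigl(j^*\colon H^*(\overline{X})\to H^*(X)\bigr)\subset\cl(A^*(X))$, and Deligne's identification of this image with $\bigoplus_k W_kH^k(X)$ finishes the proof. One small precision: for Deligne--Mumford stacks the graph morphism $\Gamma_j\colon X\to\overline{X}\times X$ is finite but need not be a closed immersion, so you should decompose $\Gamma_{j*}[X]$ rather than the class of its image, after which your projection-formula computation goes through unchanged.
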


\subsection{Comparing cohomology of $\Mb_{g,n}$ and $\M_{g,n}$} \label{wt}
In contrast with Lemma \ref{easy}, the restriction map $H^*(\Mb_{g,n}) \to H^*(\M_{g,n})$ is usually not surjective. For example, $H^1(\Mb_{0,4}) = 0$ but $H^1(\M_{0,4}) \neq 0$. The key to comparing the cohomology of $\Mb_{g,n}$ and $\M_{g,n}$ is Deligne's weight spectral sequence. In this section, we explain how the graded pieces of the weight filtration are determined by the cohomology of the $\Mb_{g',n'}$ with $2g' + n' \leq 2g + n$, along with their $\mathbb{S}_{n'}$ actions and pullback maps. For the later applications, it will be convenient to state results for compactly supported cohomology $H^*_c(\M_{g,n})$ and its weight filtration, but since $\M_{g,n}$ is smooth, this is dual to ordinary cohomology.

We write 
\[\gr_k H^*_c(\M_{g,n}) \colonequals W_k H^*_c(\M_{g,n})/W_{k-1}H_c^*(\M_{g,n})\]
for the weight $k$ graded piece. We now explain how the weight $k$ graded piece of cohomology is determined by the pullback maps on the $k$th cohomology of all boundary strata of $\Mb_{g,n}$, following \cite[Section 2.3]{PW}.
This is a special case of a more general method for computing compactly supported cohomology groups of an open stratum in stratified spaces, explained in \cite{Pstrat}, which also makes precise the idea of taking the Poincar\'e dual of Deligne's weight spectral sequence.

Let $\Gamma$ be a dual graph and let $\Mb_{\Gamma}$ be the product of moduli spaces as in \eqref{gd}. Then $\Aut \Gamma$ acts on $H^k(\Mb_{\Gamma})$ by permuting the point in a way corresponding to the action of $\Aut \Gamma$ on half-edges. We also require the representation $\det E(\Gamma)$, where each element of $\Aut \Gamma$ acts by the sign of the corresponding permutation of the edges of $\Gamma$. The Poincar\'e dual of Deligne's weight spectral sequence gives rise to a complex
\[H^k(\Mb_{g,n}) \xrightarrow{d_0} \bigoplus_{|E(\Gamma)| = 1} H^k(\Mb_{\Gamma} \otimes \det E(\Gamma))^{\Aut \Gamma} \xrightarrow{d_1} 
\bigoplus_{|E(\Gamma)| = 2} H^k(\Mb_{\Gamma} \otimes \det E(\Gamma))^{\Aut \Gamma}  \xrightarrow{d_2}  \dots.
\]
Here, $d_0$ is the pullback to the normalization of the boundary. 
When there is an edge contraction $\Gamma \to \Gamma'$, the factor of $d_1$ between those corresponding components is given by the associated pullback, with an appropriate sign.
The $j$th homology of this complex computes
\[\gr_k H_c^{j+k}(\M_{g,n}) = \frac{\ker d_j}{\im d_{j-1}}.\]

\subsection{Comparing cohomology and point counts} \label{sec:glt}
Given a Deligne--Mumford stack $X$ of finite type over $\zz$, one defines its number of points over $\ff_q$ as
\[\#X(\ff_q) \colonequals \sum_{\xi \in [X(\ff_q)]} \frac{1}{|\Aut(\xi)|},\]
where the sum ranges over representatives of isomorphism classes of objects in $X(\ff_q)$. This ``stacky" point count agrees with the regular point count of the coarse moduli space of $X$ \cite{Beh}.

The essential tool relating point counts and cohomology is the Grothendieck--Lefschetz trace formula. 
The inspiration and name comes from the classical Lefschetz fixed point theorem in topology, which computes the size of the fixed locus of an endomorphism in terms of the trace of its pullback acting on cohomology. If $\phi: X \to X$ is an endomorphism, then the fixed locus of $\phi$ is the intersection of the diagonal $\Delta \subset X \times X$ with the graph $\Gamma_\phi \subset X \times X$. We say that the fixed locus is \emph{simple} if the intersection of $\Delta$ and $\Gamma_\phi$ is transverse, equivalently if the fixed locus consists of reduced, isolated fixed points.

\begin{thm}[Lefschetz fixed point theorem]
Let $X$ be a compact, oriented manifold. If $\phi: X \to X$ is an endomorphism with simple fixed locus, then
\[\#\text{(fixed points of $\phi$)} = \sum_i (-1)^i \tr(\phi^* | H^i(X)).\]
\end{thm}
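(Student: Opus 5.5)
The approach is the standard intersection-theoretic one. I will express the number of fixed points as the intersection number $\Delta\cdot\Gamma_\phi$ in $X\times X$ and compute this number in cohomology using the Künneth decomposition of the diagonal class. Write $n=\dim X$ and work with rational coefficients. For a closed oriented submanifold $Z\subset X\times X$, let $[Z]\in H^{n}(X\times X)$ denote its Poincaré dual class.

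\textbf{Step 1 (local count).} Because the fixed locus is simple, $\Delta$ and $\Gamma_\phi$ meet transversely at finitely many points. This gives
\[\#\text{(fixed points)}=\int_{X\times X}[\Delta]\smile[\Gamma_\phi],\]
provided each transverse intersection point contributes $+1$. In general the local sign is the sign of $\det(1-d\phi_p)$. So in the oriented-manifold setting, ``simple'' has to be read as including the condition that all of these signs are positive, or else the fixed points are counted with sign. This holds automatically in the complex-analytic or algebraic situations the paper cares about, where the local index is $+1$ at every transverse point. I will state this reading explicitly.

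\textbf{Step 2 (Künneth decomposition of the diagonal).} Choose a homogeneous basis $\{e_a\}$ of $H^*(X)$. Let $\{e_a^\vee\}$ be the Poincaré dual basis, defined by $\int_X e_a\smile e_b^\vee=\delta_{ab}$. By Künneth and Poincaré duality,
\[[\Delta]=\sum_a (-1)^{\deg e_a}\,\mathrm{pr}_1^*e_a\smile \mathrm{pr}_2^*e_a^\vee.\]
To verify this, pair both sides against $\mathrm{pr}_1^*\alpha\smile\mathrm{pr}_2^*\beta$ and use that $\int_{\Delta}\mathrm{pr}_1^*\alpha\smile\mathrm{pr}_2^*\beta=\int_X\alpha\beta$.

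\textbf{Step 3 (pairing with the graph).} Write $\gamma\colon X\to X\times X$ for $x\mapsto(x,\phi(x))$, so that $\Gamma_\phi=\gamma(X)$. Then
\[\int_{X\times X}[\Delta]\smile[\Gamma_\phi]=\int_X\gamma^*[\Delta]=\sum_a(-1)^{\deg e_a}\int_X e_a\smile\phi^*e_a^\vee.\]
Expand $\phi^*e_a^\vee$ in the dual basis. The coefficient $\int_X e_a\smile\phi^*e_a^\vee$ is then the $a$-th diagonal entry of the matrix of $\phi^*$ on $H^{n-\deg e_a}(X)$, up to a sign coming from graded commutativity. Collecting these terms by degree gives $\sum_i(-1)^i\tr(\phi^*|H^i(X))$.

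\textbf{Main obstacle.} The main difficulty is the sign bookkeeping, which comes in two places. The first is making the signs in Steps 2 and 3 come out consistently: the Künneth sign $(-1)^{\deg e_a}$ combines with the graded-commutativity sign from reordering $e_a\smile\phi^*e_a^\vee$, and after summing over the two degrees $i$ and $n-i$ the result is exactly $(-1)^i$. The second is the local orientation sign in Step 1. I will fix conventions carefully once, for example by using $H^*(X)\cong H^{n-*}(X)^\vee$, and check the formula against the case $\phi=\mathrm{id}$ after a small perturbation. In that case the right-hand side is the Euler characteristic, which is the expected answer and serves as a sanity check on all signs.
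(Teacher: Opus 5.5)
Your proposal is correct, and it is the standard proof of exactly the picture the paper invokes: the paper states this classical theorem without proof and only notes that the fixed locus is $\Delta\cap\Gamma_\phi$. Your caveat about signs is also genuinely needed, since over an oriented real manifold transversality alone does not suffice (e.g.\ $z\mapsto z^3$ on $S^1$ has two transverse fixed points but Lefschetz number $1-3=-2$), whereas for holomorphic $\phi$ one has $\det_{\mathbb{R}}(1-d\phi_p)=|\det_{\mathbb{C}}(1-d\phi_p)|^2>0$. One bookkeeping correction: with your conventions $\int_X e_a\smile\phi^*e_a^\vee$ is exactly the diagonal entry in the basis $\{e_b^\vee\}$, with no commutativity sign, so Step 3 gives $\sum_j(-1)^j\tr(\phi^*|H^{n-j}(X))=(-1)^n\sum_i(-1)^i\tr(\phi^*|H^i(X))$, and likewise the local sign of $\Delta\cdot\Gamma_\phi$ is $\mathrm{sign}\det(d\phi_p-1)$ rather than $\mathrm{sign}\det(1-d\phi_p)$; these two factors of $(-1)^n$ cancel (or compute $\Gamma_\phi\cdot\Delta$ instead), but your $\phi=\mathrm{id}$ sanity check cannot detect this in odd dimensions, where $\chi(X)=0$.
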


To explain the connection with point counts over finite fields, recall that given a variety $X$ over $\mathbb{F}_q$, the $\ff_q$-points of $X$ are precisely the fixed locus of the Frobenius endomorphism. Although $X$ is not longer a compact, oriented manifold, a similar formula for the size of the fixed locus holds in an appropriate cohomology theory. We write $H^i_{c, \et}(-, \qq_\ell)$ for compactly supported $\ell$-adic cohomology.  We refer the reader to \cite{milne} for background on \'etale cohomology. 

\begin{thm}[Behrend--Grothendiek--Lefschetz trace formula \cite{Beh}]
Let $X$ be a smooth Deligne--Mumford stack over $\spec \zz$ and let $\ell$ be a prime with $(\ell, q) = 1$. Then
\begin{equation} \label{trace} \#X(\ff_q) = \sum_i (-1)^i \tr(\mathrm{Frob}_q^* | H^i_{c, \et}(X_{\overline{\ff}_q}, \qq_\ell)).
\end{equation}
\end{thm}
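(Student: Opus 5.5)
This is the Grothendieck--Lefschetz trace formula for stacks, due to Behrend. In an expository setting the natural plan is to reduce to the classical trace formula for schemes and then handle the stackiness.

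\emph{Step 1: the scheme case.} First take $X$ a separated scheme of finite type over $\ff_q$. I would invoke Grothendieck's trace formula for compactly supported $\ell$-adic cohomology, $\#X(\ff_q)=\sum_i(-1)^i\tr(\mathrm{Frob}_q^*\mid H^i_{c,\et}(X_{\overline{\ff}_q},\qq_\ell))$, as in \cite{milne}. Recall its structure: by excision, i.e.\ the long exact sequence for $U\subset X$ open with closed complement $Z$, both sides are additive over stratifications. One reduces to smooth affine curves and then to a local computation; the geometric Frobenius is used, whose graph meets the diagonal transversally because its differential vanishes. This is the genuine analogue of the ``simple fixed locus'' hypothesis in the Lefschetz fixed point theorem.

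\emph{Step 2: reduction for the stack.} For $X$ a smooth Deligne--Mumford stack (base changed to $\ff_q$), both sides are again additive under decomposition into a locally closed substack and its complement. Additivity holds for the stacky count by definition, and for the cohomological side by the excision sequence for $H^*_c$ together with multiplicativity of trace in long exact sequences. Hence I would stratify $X$ into pieces of the form $[V/G]$, with $V$ a quasi-projective scheme and $G$ a finite group. This is possible for Deligne--Mumford stacks of finite type, for instance by using generic gerbiness and a Noetherian induction.

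\emph{Step 3: quotient stacks.} For $[V/G]$ we have $H^*_c([V/G])=H^*_c(V)^G$, since $G$ is finite and we use rational $\ell$-adic coefficients. Compute $\#[V/G](\ff_q)$ by a groupoid count: its objects are $G$-torsors $P$ over $\ff_q$ with an equivariant map $P\to V$. Sorting by the Frobenius twist $g\in G$, this gives
\[
\#[V/G](\ff_q)=\frac{1}{|G|}\sum_{g\in G}\#\{v\in V(\overline{\ff}_q): \mathrm{Frob}(v)=g v\}.
\]
Each term is the fixed-point count of the twisted Frobenius $g^{-1}\circ\mathrm{Frob}$. This twist is itself a Frobenius for the $\ff_q$-form of $V$ twisted by $g$, so Step 1 gives $\sum_i(-1)^i\tr(g^{-1}\mathrm{Frob}^*\mid H^i_c(V))$. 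Averaging over $g$ projects onto $G$-invariants, yielding $\sum_i(-1)^i\tr(\mathrm{Frob}^*\mid H^i_c(V)^G)$, as desired.

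\emph{Main obstacle.} The hardest step is the bookkeeping in Step 3. One must match the stacky groupoid count, including the $1/|\Aut\xi|$ weights and the twisted forms arising from nontrivial torsors, with the averaged twisted fixed-point counts. One must also verify that the twisted Frobenius really is the Frobenius of a twisted form, which requires $V$ to be quasi-projective so that the descent is effective. The smoothness assumption is not needed for the compactly supported statement; it only enters later, when Poincar\'e duality is used to convert to ordinary cohomology.
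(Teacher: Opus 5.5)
The paper does not prove this statement; it quotes it from Behrend. So the relevant comparison is with Behrend's argument, and your proposal is a correct but genuinely different route.

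Behrend proves the formula for smooth algebraic stacks that need not be Deligne--Mumford. He phrases it with ordinary cohomology and the arithmetic Frobenius $\Phi_q$: $\#X(\ff_q)=q^{\dim X}\sum_i(-1)^i\tr(\Phi_q\mid H^i(X_{\overline{\ff}_q},\qq_\ell))$. The compactly supported version stated in the paper is its Poincar\'e dual, which is why the statement carries a smoothness hypothesis. What his approach buys is generality. It covers stacks like $B\mathrm{GL}_n$ or $\mathrm{Bun}_G$, which have positive-dimensional stabilizers and infinitely many nonzero cohomology groups, and so admit no stratification by finite-group quotients.

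Your route is sound and more elementary for Deligne--Mumford stacks: additivity over stratifications, strata of the form $[V/G]$, the count $\#[V/G](\ff_q)=\frac{1}{|G|}\sum_{g}\#\{v\in V(\overline{\ff}_q):\mathrm{Frob}(v)=gv\}$, the scheme case applied to the twisted forms, and averaging to project onto $H^*_c(V)^G$. It also shows, as you correctly say, that smoothness is irrelevant for the $H^*_c$ form.

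Your orbit count in fact proves more than you use. Each Frobenius-stable $G$-orbit in $V(\overline{\ff}_q)$ contributes exactly $|G|$ pairs $(v,g)$ with $\mathrm{Frob}(v)=gv$, and the other orbits contribute none. Hence $\#[V/G](\ff_q)=\#(V/G)(\ff_q)$, which is the comparison with the coarse moduli space that the paper also attributes to Behrend. Combined with $R\pi_*\qq_\ell=\qq_\ell$ for the coarse map $\pi$ of a separated Deligne--Mumford stack, this gives an even shorter proof for $\M_{g,n}$ and $\Mb_{g,n}$: both sides of the formula can be computed on the coarse space, where the scheme case applies.

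Two minor remarks. First, in Step 2 the cleanest justification of the stratification runs as follows. A Deligne--Mumford stack of finite type has a dense open substack $\mathcal{U}$ with a finite \'etale cover of constant degree $d$ by an affine scheme. The scheme $V$ of orderings of its fibers is then an $\mathbb{S}_d$-torsor, so $\mathcal{U}\cong[V/\mathbb{S}_d]$; generic gerbiness alone needs further work to give this. Second, the excision sequence you invoke presupposes a definition of $H^*_c$ for stacks. For separated Deligne--Mumford stacks this can be taken via the coarse space, and that foundational input is where the real content of the stacky theory lies.
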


The compactly supported \'etale cohomology groups also admit a weight filtration.
Work of Deligne \cite{deligne} states that the eigenvalues of $\mathrm{Frob}_q^*$ acting on the weight $k$ graded piece are all Weil numbers of of weight $k$, meaning that under any embedding $\overline{\qq_\ell} \hookrightarrow \cc$,
they are algebraic integers of absolute value $q^{k/2}$. Moreover, the characteristic polynomial of $\mathrm{Frob}_q^*$ is independent of the choice of the auxiliary prime $\ell$ that goes into the construction of $\ell$-adic \'etale cohomology.

\begin{example}
Let us count $\ff_q$-points on $\pp^1$ using the trace formula. The endomorphism $\mathrm{Frob}_q: \pp^1 \to \pp^1$ is given by $[x:y] \mapsto [x^q:y^q]$.
We have $H^0_{c,\et}(\pp^1_{\overline{\ff}_q}, \qq_\ell)$ is spanned by the fundamental class of $\pp^1$, which pulls back to the fundamental class of $\pp^1$ under $\mathrm{Frob}_q$. Meanwhile, $H^1_{c,\et}(\pp^1_{\overline{\ff}_q}, \qq_\ell) = 0$, and $H^2_{c,\et}(\pp^1_{\overline{\ff}_q}, \qq_\ell)$ is spanned by the class of a point.
Since $\mathrm{Frob}_q$ is degree $q$, the pullback of a point is $q$ times the class of a point. Thus
\[\#\pp^1(\ff_q) = \sum_i (-1)^i \tr(\mathrm{Frob}_q^* | H^i_{c, \et}(\pp^1_{\overline{\ff}_q}, \qq_\ell)) = 1 - 0 + q = 1 + q.\]
\end{example}

In what follows, we write $\mathsf{L} \colonequals H^2_{c,\et}(\pp^1, \qq_\ell)$
 and $\mathsf{L}^i \colonequals \mathsf{L}^{\otimes i}$, called the $i$th Tate twist (often also denoted $\qq_\ell(-i)$). The $i$th Tate twist is a $1$-dimensional Galois representation on which $\mathrm{Frob}_q^*$ acts by $q^i$.
If the semi-simplification of $H^*_{c,\et}(X, \qq_\ell)$ is a sum of Tate twists, we say the cohomology of $X$ is \emph{Tate type}.
Generalizing the above example, the following lemma is an immediate consequence of the trace formula.

\begin{lem} \label{tate}
Let $X$ be a smooth Deligne--Mumford stack over $\spec \zz$.
If the cohomology of $X$ is Tate type, then $\#X(\ff_q)$ is a polynomial in $q$.
\end{lem}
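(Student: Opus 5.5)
The plan is to apply the Behrend--Grothendieck--Lefschetz trace formula \eqref{trace} directly and compute each trace from the Tate-type hypothesis. Fix a prime $\ell$ coprime to $q$. Because $X$ is of finite type, the groups $H^i_{c,\et}(X_{\overline{\ff}_q},\qq_\ell)$ are finite-dimensional and vanish outside finitely many degrees $i$. Hence the alternating sum in \eqref{trace} is a finite sum of traces, and it is enough to show that each trace $\tr(\mathrm{Frob}_q^*\mid H^i_{c,\et})$ is a polynomial in $q$.

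Trace is additive in short exact sequences, so it depends only on the semisimplification of the Galois representation. Choose a composition series of $H^i_{c,\et}$ whose successive quotients are $\mathsf{L}^{j}$ with multiplicities $m_{i,j}$; the Tate-type hypothesis gives exactly this. Since $\mathrm{Frob}_q^*$ acts on $\mathsf{L}^j$ by $q^j$, we get $\tr(\mathrm{Frob}_q^*\mid H^i_{c,\et}) = \sum_j m_{i,j}q^j$. Summing over $i$ then gives
\[\#X(\ff_q) = \sum_{i,j} (-1)^i m_{i,j}\, q^j,\]
which is a polynomial in $q$ with integer coefficients.

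The main point needing care is making sure this is one polynomial valid for all $q$, not a separate polynomial for each $q$. In other words, the multiplicities $m_{i,j}$ must not depend on the characteristic. I would interpret ``Tate type'' as a statement about the cohomology of $X$ with Galois action, so that these multiplicities are fixed data. If needed, one can pass to a base of good reduction and use smooth and proper base change, or the comparison with Betti cohomology over $\cc$, to see that the $m_{i,j}$ are the same for all fibers. Deligne's theorem that the characteristic polynomial of Frobenius is independent of $\ell$ ensures the answer does not depend on the auxiliary prime. With the multiplicities fixed, the formula above is a single polynomial in $q$.
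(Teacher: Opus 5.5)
This is exactly the paper's argument: the paper records the lemma as an immediate consequence of the trace formula \eqref{trace}, because traces depend only on the semisimplification and $\mathrm{Frob}_q^*$ acts on $\mathsf{L}^j$ by $q^j$. You are right that uniformity in $q$ has to be built into the meaning of ``Tate type'' (multiplicities independent of the fiber), but your suggested fallback (good reduction plus proper base change or Betti comparison) cannot supply it for non-proper $X$: for instance $X = \aa^1_\zz \smallsetminus V(2x-1)$ is smooth over $\spec \zz$ with Tate-type cohomology on every fiber, yet $\#X(\ff_q)$ equals $q$ when $q$ is a power of $2$ and $q-1$ otherwise, so for $\M_{g,n}$ the uniformity must instead come from the compactification $\Mb_{g,n}$, which is smooth and proper over $\spec \zz$ with boundary a normal crossings divisor relative to $\spec \zz$.
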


As we saw in the example with the class of a point on $\pp^1$, 
algebraic cycles on $X$ defined over $\zz$ always give classes of Tate type. In particular, we have the following corollary.
\begin{cor}[Proposition 3.15 of \cite{ckgp}] \label{hrh}
If $H^*(\Mb_{g,n}) = RH^*(\Mb_{g,n})$ then $\#\Mb_{g,n}(\ff_q)$ is a polynomial in $q$.
\end{cor}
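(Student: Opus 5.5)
The strategy is to show that the $\ell$-adic cohomology of $\Mb_{g,n}$ is of Tate type and then invoke Lemma \ref{tate}. By hypothesis, every class in $H^*(\Mb_{g,n})$ is the image under the cycle class map of a tautological cycle. Each tautological cycle is a $\qq$-linear combination of decorated boundary strata \eqref{decst}. These strata are built from pushforwards along the gluing and forgetful maps and from the classes $\psi_i,\kappa_j$, and all of these are defined over $\spec \zz$. So each decorated boundary stratum is the class of an algebraic cycle on $\Mb_{g,n}$ defined over $\zz$. In particular, $RH^*(\Mb_{g,n})$ vanishes in odd degrees and is spanned in degree $2k$ by cycle classes of codimension $k$.

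The first step is to transport the hypothesis, which is stated for Betti cohomology over $\cc$, to $\ell$-adic cohomology over $\overline{\ff}_q$. Since $\Mb_{g,n}$ is smooth and proper over $\spec \zz$, the smooth and proper base change theorems give comparison isomorphisms
\[H^i(\Mb_{g,n}(\cc),\qq)\otimes \qq_\ell \cong H^i_{\et}(\Mb_{g,n,\overline{\qq}},\qq_\ell) \cong H^i_{\et}(\Mb_{g,n,\overline{\ff}_q},\qq_\ell).\]
These isomorphisms are compatible with the cycle class maps, via specialization of cycles defined over $\zz$. For Deligne--Mumford stacks one can reduce to the coarse space or to a finite cover by a scheme, using rational coefficients. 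Consequently the $\ell$-adic cohomology over $\overline{\ff}_q$ is spanned by cycle classes of the reductions mod $p$ of the tautological cycles. I expect this compatibility of the comparison isomorphisms with cycle classes to be the main technical point, and I would cite the standard specialization formalism for it rather than reprove it.

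Next, I compute the Frobenius action on these classes. If $Z\subset \Mb_{g,n,\ff_q}$ is a codimension $k$ cycle defined over $\ff_q$, then $\mathrm{Frob}_q^*\cl(Z) = q^k\cl(Z)$, which generalizes the point on $\pp^1$ example. One way to see this is that $\mathrm{Frob}_q$ preserves $Z$ and has degree $q^k$ in the normal directions. Equivalently, the cycle class map lands in $H^{2k}(-,\qq_\ell(k))$, and $\mathrm{Frob}_q^*$ acts on $\qq_\ell(k)$ by $q^{-k}$ twisted back. Hence $H^{2k}_{\et}$ is isomorphic to $\mathsf{L}^k$ tensored with a vector space of dimension $\dim H^{2k}(\Mb_{g,n})$, and the odd cohomology is zero. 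Since $\Mb_{g,n}$ is proper, compactly supported cohomology equals ordinary cohomology, so the cohomology is Tate type.

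Finally, I apply Lemma \ref{tate}, or equivalently the trace formula \eqref{trace} directly, to get
\[\#\Mb_{g,n}(\ff_q) = \sum_k \dim H^{2k}(\Mb_{g,n})\, q^k,\]
which is a polynomial in $q$. This holds for every prime power $q$, including those dividing small primes, because $\Mb_{g,n}$ is smooth and proper over all of $\spec\zz$.
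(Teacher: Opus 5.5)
Your proposal is correct and follows the paper's argument. The paper observes, as in the $\pp^1$ example, that algebraic cycles defined over $\zz$ give classes of Tate type, so the hypothesis makes all cohomology Tate type and Lemma \ref{tate} applies. You add the details the paper leaves implicit: the comparison and base-change isomorphisms compatible with cycle classes, and the fact that Frobenius acts by $q^k$ on codimension $k$ cycle classes. This also identifies the coefficients of the polynomial as the even Betti numbers.
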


Less obvious is that the converse of Lemma \ref{tate} holds when $X$ is also assumed to be proper. In fact, having polynomial point count determines the cohomology of $X$ as a Galois representation.
\begin{thm}[van den Bogaart--Edixhoven \cite{edix}] \label{be}
Let $X$ be a smooth, proper Deligne--Mumford stack over $\spec \zz$.
If $\#X(\ff_q) = a_d q^d + \cdots + a_1 q + a_0$ is a polynomial in $q$ , then all odd cohomology groups of $X$ vanish and the even cohomology groups are pure Tate
\[H^{2i}_{\et}(X, \qq_\ell) = (\mathsf{L}^i)^{\oplus a_{i}}.\]
\end{thm}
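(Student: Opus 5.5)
The plan is to combine the trace formula \eqref{trace} with Deligne's purity and a density argument on Frobenius eigenvalues. Since $X$ is smooth and proper over $\spec \zz$, smooth and proper base change identifies $H^i_{\et}(X_{\overline{\ff}_p},\qq_\ell)$ with $H^i_{\et}(X_{\overline{\qq}},\qq_\ell)$ for every prime $p \neq \ell$, compatibly with the action of the decomposition group. We may also drop the subscript $c$ because $X$ is proper. By Deligne, the eigenvalues $\alpha_{i,j}$ of $\mathrm{Frob}_p^*$ on $H^i$ are Weil numbers of weight exactly $i$, so $|\alpha_{i,j}| = p^{i/2}$. For $q = p^r$, the trace formula then reads
\[\sum_i (-1)^i \sum_j \alpha_{i,j}^r = \sum_k a_k p^{rk} \qquad \text{for all } r \geq 1.\]

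The first step is to separate the weights. Both sides are exponential sums in $r$, and exponential functions $r \mapsto \beta^r$ with distinct bases are linearly independent over $\cc$. Hence the multiset of $\alpha_{i,j}$, counted with sign $(-1)^i$, agrees after cancellation with the multiset consisting of $p^k$ taken with multiplicity $a_k$. Cancellation can only happen between eigenvalues of equal absolute value, so purity tells us that no cancellation occurs between $H^i$ and $H^{i'}$ for $i \neq i'$. Therefore, for each $i$, the signed eigenvalue multiset of $H^i$ equals the part of the right-hand side of absolute value $p^{i/2}$. For odd $i$ there is no such part, since $p^{i/2}$ is not an integer power of $p$, so every odd eigenvalue must cancel. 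But within a single $H^i$ all the eigenvalues carry the same sign, so they cannot cancel among themselves. This forces $H^{i}=0$ for $i$ odd. For $i = 2k$ we obtain that $\mathrm{Frob}_p$ acts on $H^{2k}$ with every eigenvalue equal to $p^k$, with multiplicity $a_k$; in particular $\dim H^{2k} = a_k$.

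The main obstacle is upgrading "all Frobenius eigenvalues equal $p^k$" to the statement that $H^{2k} \cong (\mathsf{L}^k)^{\oplus a_k}$ as a Galois representation. I would proceed in two stages. First, the semisimplification of the $\mathrm{Gal}(\overline{\qq}/\qq)$-representation $V = H^{2k}\otimes \mathsf{L}^{-k}$ is unramified outside $\ell$, and $\mathrm{Frob}_p$ has characteristic polynomial $(t-1)^{a_k}$ for every $p \neq \ell$. By Chebotarev density and Brauer--Nesbitt, this semisimplification is trivial. Second, to remove the semisimplification, I would need semisimplicity of the Frobenius action, which is not known in general. Here I would try to work in the Grothendieck group of Galois representations and note that an iterated extension of trivial representations is unipotent. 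To get a genuine direct sum, I would invoke the result of van den Bogaart--Edixhoven, whose argument uses $p$-adic Hodge theory: the representation is crystalline, and an unramified-type argument then shows that the unipotent extensions split. If that step proves too hard, I would at least state the conclusion up to semisimplification, which already gives the vanishing of odd cohomology and the dimension count $\dim H^{2i} = a_i$.
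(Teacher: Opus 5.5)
Your first two stages are correct. The trace formula, Deligne's purity, and the linear independence of the functions $r\mapsto\beta^r$ give vanishing of odd cohomology, all Frobenius eigenvalues on $H^{2i}$ equal to $p^i$, and $\dim H^{2i}=a_i$. Chebotarev plus Brauer--Nesbitt then show that $V=H^{2i}_{\et}(X_{\overline{\qq}},\qq_\ell)\otimes\mathsf{L}^{-i}$ has trivial semisimplification. (The survey gives no argument of its own; it only quotes van den Bogaart--Edixhoven.)

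The gap is the final step. The statement asserts an actual isomorphism $H^{2i}\cong(\mathsf{L}^i)^{\oplus a_i}$; elsewhere the paper writes ${}^{\mathrm{ss}}$ explicitly when only the semisimplification is meant. For this step you appeal to ``the result of van den Bogaart--Edixhoven'', which is the theorem you are proving, so the step is circular. Your fallback proves only the weaker semisimplified statement. As written, the Galois-module part is not established.

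The missing idea is that non-split unipotent extensions are ruled out by $p$-adic Hodge theory at $\ell$, not by Frobenius semisimplicity. Choose a $G_\qq$-stable filtration $0=V_0\subset\cdots\subset V_a=V$ with $V_j/V_{j-1}\cong\qq_\ell$. By Faltings' Hodge--Tate comparison, $V$ is Hodge--Tate at $\ell$, and so is every subquotient. For the stack, pull back to a smooth proper variety mapping generically finitely onto $X_{\qq}$; this pullback is injective and Galois-equivariant.

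By class field theory, $H^1(G_\qq,\qq_\ell)=\mathrm{Hom}_{\mathrm{cont}}(G_\qq,\qq_\ell)$ is spanned by $\log\chi_\ell$. By Tate's theorem, $H^1(G_{\qq_\ell},\mathbb{C}_\ell)$ is spanned by the image of $\log\chi_\ell$, so the corresponding extension of $\qq_\ell$ by $\qq_\ell$ is not Hodge--Tate. Now induct. Suppose $V_{j-1}\cong\qq_\ell^{j-1}$. The class of $V_j$ lies in $H^1(G_\qq,V_{j-1})\cong H^1(G_\qq,\qq_\ell)^{j-1}$. Its coordinates are the classes of the pushouts $V_j/\ker\pi_m$ along the coordinate projections $\pi_m$. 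These are quotients of $V_j$, hence Hodge--Tate, hence split, so $V_j\cong\qq_\ell^{j}$.

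Alternatively: $V$ is Hodge--Tate with all weights $0$, so by Sen's theorem inertia at $\ell$ acts through a finite quotient. That quotient is trivial, because the image of $G_\qq$ is unipotent and unipotent groups in characteristic $0$ are torsion-free. Thus $V$ is unramified at every prime ($p\neq\ell$ by good reduction), and Minkowski's theorem forces $G_\qq$ to act trivially. With this step supplied, your argument is complete.
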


\begin{rem}
As we shall see later on, in every case where $\#\Mb_{g,n}(\ff_q)$ is known to be a polynomial in $q$, it is also known that $H^*(\Mb_{g,n}) = RH^*(\Mb_{g,n})$. Theorem \ref{be} says that  if $\#\Mb_{g,n}(\ff_q)$ is a polynomial in $q$, then the cohomology of $X$ is pure Tate. The Tate conjecture then predicts that the cycle class map is surjective. However, the statement that $H^*(\Mb_{g,n}) = RH^*(\Mb_{g,n})$ is stronger than the claim that the cycle class map is surjective.
\end{rem}

One of the key ways properness is used in Theorem \ref{be} is that, when $X$ is proper, weight $k$ eigenvalues only occur in $H^k$. When $X$ is not proper, the converse to Lemma \ref{tate} need not hold because, a priori, some non-Tate eigenvalues could occur equally often in odd and even cohomological degrees, and cancel each other in the alternating sum \eqref{trace}.
Motivated by the possibility of such cancellation, we define the \emph{weight $k$ Euler characteristic of $X$} to be
\begin{equation} \label{chik-def} \chi_k(X) = \sum_i (-1)^i \dim \gr_k H^i_c(X).
\end{equation}
With this notation, we have the following result, even if $X$ is not proper. (If $X$ is proper, then the result follows immediately from the contrapositive of Theorem \ref{be}.)
\begin{lem}[Proposition 2.3 of \cite{h13}] \label{chik}
Let $X$ be a smooth Deligne--Mumford stack over $\spec \zz$. If $\#X(\ff_q)$ is a polynomial in $q$, then $\chi_k(X) = 0$ for all odd $k$.
\end{lem}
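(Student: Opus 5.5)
We would prove this by comparing, for every power $q^m$, the trace formula \eqref{trace} over $\ff_{q^m}$ with the polynomial count, and then separating contributions by weight. Fix a prime power $q$, and choose an auxiliary $\ell$ prime to $q$. Since the weight filtration on $H^i_{c,\et}(X_{\overline{\ff}_q},\qq_\ell)$ is Frobenius-stable, the trace on $H^i_c$ equals the sum of the traces on the graded pieces $\gr_k H^i_c$. Write $\alpha_{k,i,1},\dots,\alpha_{k,i,d_{k,i}}$ for the Frobenius eigenvalues on $\gr_k H^i_c$, with multiplicity, where $d_{k,i}=\dim \gr_k H^i_c$. Then for every $m\ge 1$ we have
\[
\#X(\ff_{q^m}) = \sum_{k}\sum_i (-1)^i \sum_{j} \alpha_{k,i,j}^m .
\]
By hypothesis, the left side is $P(q^m)$ for a polynomial $P$ with rational (in fact integer) coefficients. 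We therefore obtain an identity of power sums, valid for all $m\ge1$:
\[
\sum_{k,i,j} (-1)^i \alpha_{k,i,j}^m = \sum_{r} a_r (q^r)^m .
\]

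Next we use linear independence of exponential sequences. If $\sum_\beta c_\beta \beta^m = 0$ for all $m\ge1$, with finitely many distinct complex numbers $\beta$, then every $c_\beta$ vanishes; this is a Vandermonde argument. Fix an embedding $\overline{\qq_\ell}\hookrightarrow\cc$ and group the terms on both sides by the value of $\beta$. For each $\beta$, the signed multiplicity with which $\beta$ occurs on the left must equal the coefficient with which it occurs on the right. By Deligne's theorem, every eigenvalue $\alpha_{k,i,j}$ has absolute value $q^{k/2}$. So an eigenvalue coming from an odd weight $k$ has absolute value $q^{k/2}\notin q^{\zz_{\ge0}}$. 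In particular it is never equal to any $q^r$, and it can never coincide with an eigenvalue from a different weight. Hence, for each odd $k$ and each $\beta$ with $|\beta|=q^{k/2}$, we get
\[
\sum_i (-1)^i\,\#\{j : \alpha_{k,i,j}=\beta\} = 0 .
\]
Summing over all such $\beta$ gives $\sum_i(-1)^i d_{k,i}=0$, which is exactly $\chi_k(X)=0$ as defined in \eqref{chik-def}.

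Two points need care. First, $\chi_k$ is defined using the weight filtration on the (Betti or $\ell$-adic) cohomology of $X$. We must make sure that the dimensions $\dim \gr_k H^i_c$ agree with those of the $\ell$-adic cohomology of $X_{\overline{\ff}_q}$, for some prime $p$ of good behavior. Since $X$ is a smooth Deligne--Mumford stack over $\spec\zz$, we would invoke the standard comparison and specialization theorems: we would choose $p$ such that $X$ has a good compactification with normal crossings boundary near $p$ (or is otherwise tame), so that the compactly supported cohomology is unramified, and then identify the weight filtration with Deligne's. I expect this identification, especially for stacks, to be the main technical obstacle, and I would handle it by citing Behrend's trace formula setup together with Deligne's Weil II. Second, the weights of $H^i_c$ are $\le i$ and need not be pure. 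This causes no difficulty, because the argument above only uses the absolute values of the eigenvalues, never their relation to $i$.
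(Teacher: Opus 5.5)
Your proof is correct and follows essentially the route the paper has in mind. The paper gives no proof of its own here: it cites Proposition 2.3 of the source and motivates $\chi_k$ by the remark that odd-weight (non-Tate) eigenvalues could a priori cancel between cohomological degrees. You make that remark precise: you apply the trace formula over every $\ff_{q^m}$, use Deligne's bound $|\alpha|=q^{k/2}$ on $\gr_k$, and invoke linear independence of distinct exponentials, which forces the signed multiplicity of each odd-weight eigenvalue to vanish and hence gives $\chi_k(X)=0$. The only outside input is the one you flag, namely choosing a good prime $p$ so that $\dim \gr_k H^i_c(X_{\overline{\ff}_p})$ agrees with the weight-graded dimensions that define $\chi_k$; citing the standard specialization and weight-comparison results for this is appropriate.
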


\begin{example}
The first example of an interesting odd cohomology group of moduli spaces of curves is 
\[H^{11}(\Mb_{1,11}) \otimes \cc = H^{11,0}(\Mb_{1,11}) \oplus H^{0,11}(\Mb_{1,11}).\]
Let
\[\Delta(q) = q\prod_{m=1}^\infty (1 - q^m)^{24} = \sum_{n=1}^\infty \tau(n) q^n\]
be the weight $12$ cusp form for $\SL_2\zz$. (We apologize for the  clash of notation; in this one instance above, $q = e^{2\pi i z}$ for a complex variable $z$.)
From the weight $12$ cusp form, one can construct an explicit holomorphic form on $\Mb_{1,11}$, see \cite[p. 306]{FP}.
Meanwhile, having odd weight, the Galois representation $H_{\et}^{11}(\Mb_{1,11},\qq_\ell)$ is not Tate type. In fact, it is a $2$-dimensional irreducible Galois representation which satisfies
\[\tr(\mathrm{Frob}_p^* | H^{11}_{\et}(\Mb_{1,11}, \qq_\ell)) = \tau(p).\]
It turns out that all other odd cohomology groups of $\Mb_{1,11}$ vanish and the even cohomology groups are Tate type.
Consequently, the point counts of $\Mb_{1,11}$ have the form
\[\#\Mb_{1,11}(\ff_p) = P(p) - \tau(p)\]
where $P(p)$ is a polynomial in $p$, see \cite[Section 2.2]{FP} for more details.
\end{example}

More generally, for each Hecke eigenform $f$ of weight $k$ for $\SL_2\zz$, there is an associated $2$-dimensional irreducible Galois representation of weight $k-1$ on which $\mathrm{Frob}_p$ acts by the $p$th Fourier coefficient of $f$ \cite{Del71}. Following the notation of \cite{BF}, for the eigenforms of weights $k = 12, 16, 18$ and $20$, we denote this associated Galois representation by $\mathsf{S}_k$.
There are also irreducible Galois representations, denoted $\mathsf{S}_{j,k}$ of weight $j + 2k - 3$, which come from genus $2$ and
are associated to Siegel cusp forms for $\mathrm{Sp}_4(\zz)$ \cite{Lau05,Wei05}.

This association is part of a larger web of conjectures in the Langlands program which says roughly that there should be a bijection between algebraic automorphic representations and irreducible geometric Galois representations. A result of 
Chenevier and Lannes classifies all cuspidal automorphic representations of conductor $1$ and weight up to $22$. In \cite{BF}, Bergstr\"om and Faber explain how, in combination with this classification result, these Langlands conjectures would then restrict the Galois representations of weight at most $22$ that appear in the cohomology of Deligne--Mumford stacks that are smooth and proper over $\spec \zz$.
 
 \begin{conj} \label{lc}
 Let $X$ be a smooth, proper Delgne--Mumford stack over $\spec \zz$, such as $\Mb_{g,n}$.
 Suppose $k \leq 22$. Then, the semi-simplification
 $H^k(X, \qq_\ell)^{\mathrm{ss}}$ is a sum of Tate twists of the following Galois representations
 \[ \mathbf{1}, \ \mathsf{S}_{12}, \  \mathsf{S}_{16}, \  \mathsf{S}_{18}, \  \mathsf{S}_{20}, \mathsf{S}_{6,8}, \ \mathsf{S}_{22}, \ \mathsf{S}_{4,10},\  \mathsf{S}_{8,8}, \ \mathsf{S}_{12,6}, \ \Sym^2 \mathsf{S}_{12}.\]
\end{conj}

For $X$ a smooth and proper Delgne--Mumford stack of dimension $d$, we have $H^k(X, \qq_\ell)$ is pure weight $k$, meaning the eigenvalues of $\mathrm{Frob}_q$ acting on $H^k(X, \qq_\ell)$ all have absolute value $q^{k/2}$. Thus the leading order terms in the point counts $\#X(\ff_q)$ come from the highest degree cohomology groups.
Poincar\'e duality shows that, as Galois representations, the high degree cohomology groups differ from corresponding low degree cohomology groups by a Tate twist: $H^{2d - k}(X, \qq_\ell) \cong \mathsf{L}^{d - k} \otimes H^k(X, \qq_\ell)$.
Thus, using the trace formula, Conjecture \ref{lc} implies loosely speaking that $\#X(\ff_q)$ is well-approximated by a polynomial in $q$, with the leading order correction terms coming from coefficients of moduli forms and Siegel modular forms.

\subsection{Comparing point counts of $\Mb_{g,n}$ and $\M_{g,n}$} \label{equiv}
The point counts of $\Mb_{g,n}$ and $\M_{g,n}$ are related by the equation
\[\#\Mb_{g,n}(\ff_q) = \#\M_{g,n}(\ff_q) + \#\partial \M_{g,n}(\ff_q).\]
Furthermore, $\#\partial \M_{g,n}(\ff_q)$ can be computed by summing up point counts for each locally closed boundary stratum. Each boundary stratum is a finite group quotient of a product of $\M_{g',n'}$ with $2g' + n' \leq 2g + n$. The point count of a product is the product of the point counts, so one might hope to translate between the following two sets of information
\[\{\#\M_{g',n'}(\ff_q) : 2g'+n' \leq c\} \overset{?}{\longleftrightarrow} \{\#\Mb_{g',n'}(\ff_q) : 2g'+n' \leq c\}.\]
However, some additional care is needed because point counts do not always play well with quotients by finite groups. Instead, we shall see that the sets of information above become equivalent when we pass to their equivariant versions.

The reason we need equivariant point counts is because, although it holds for  \emph{connected} linear algebraic groups, the naive equality $\#[X/G](\ff_q) = \#X(\ff_q)/\#G(\ff_q)$ need not hold for finite groups $G$. For example, consider $G = \zz/2$ acting on $\aa^1$, where the non-trivial element acts by $t \mapsto -t$. The quotient stack $[\aa^1/G]$ has coarse moduli space $\aa^1$. Thus, we find
\[\#\aa^1(\ff_q) = q \qquad \text{and} \qquad \#[\aa^1/G](\ff_q) = \#\aa^1(\ff_q) = q.\]
The resolution to this seemly strange statement is that an $\ff_q$-point of the quotient could come either from two $\ff_q$ points on $\aa^1$ that form a $G$ orbit, or from two conjugate $\ff_{q^2}$ points whose orbit is still defined over $\ff_q$.

In general, equivariant point counts are defined using \emph{twisted forms}.
Given a variety $X$ over $\ff_q$ and $\sigma \in \Aut(X)$, there is a unique twisted form of $X$, denoted $X^\sigma$ with an isomorphism $X^\sigma_{\overline{\ff}_q} \to X_{\overline{\ff}_q}$ that identifies the geometric Frobenius action on $X^\sigma_{\overline{\ff}_q}$ with the composition of $\sigma$ and the Frobenius on $X_{\overline{\ff}_q}$. 
For $X = \Mb_{g,n}$ or $\M_{g,n}$ and $\sigma \in G = \mathbb{S}_n$ and some fixed $q$,
there is a twisted form $X^\sigma$, which is a smooth Deligne--Mumford stack over $\ff_q$ representing the functor taking a scheme $S$ over $\ff_q$ to the set of isomorphism classes of families of (stable) curves over $S$ with $n$ disjoint labeled geometric sections such that Frobenius acts on the sections via the permutation $\sigma$.
 Given a finite group $G$ acting on $X$, conjugate elements of $G$ induce isomorphic twisted forms of $X$, so $\#X^\sigma(\ff_q)$ is a class function on $G$. The \emph{$G$-equivariant point count}, denoted $\#^GX(\ff_q)$, is the element of the representation ring of $G$ associated to the class function $\sigma \mapsto \#X^\sigma(\ff_q)$.
By the Grothendeick--Lefschetz trace formula, $\#X(\ff_q)$ is determined by the virtual Galois representation $\chi_c(X, \qq_\ell) \colonequals \sum (-1)^i H_c^i(X, \qq_\ell)$.  Similarly, knowing the $G$-equivariant point counts amount to knowing $\chi_c(X, \qq_\ell)$ as a $G$-equivariant Galois representation.

The Getzler--Kapranov formula \cite{gk} gives a recipe for computing one of the sets of equivariant point counts below from the other:
\[\{\#^{\mathbb{S}_{n'}}\M_{g',n'}(\ff_q) : 2g'+n' \leq c\} \longleftrightarrow \{\#^{\mathbb{S}_{n'}}\Mb_{g',n'}(\ff_q) : 2g'+n' \leq c\}.\]
The Getzler--Kapranov formula is essentially a combinatorial version of the spectral sequence discussed in Section \ref{wt}.

\section{When are the Chow or cohomology rings tautological?} \label{sec:taut}

In this section, we survey some results on the Chow and cohomology rings of $\Mb_{g,n}$ and $\M_{g,n}$, with particular attention to when the ring is generated by tautological classes or not. We start with the case of the Chow ring, summarizing the results with the chart below.
We then note where some stronger results are known in cohomology.
In the chart below, we summarize the known results by placing a symbol from the key at position $(g, n)$ when the corresponding condition is known to hold for $\Mb_{g,n}$ or $\M_{g,n}$.

\begin{figure}
\begin{center}
\begin{tikzpicture}[scale=.8]
\filldraw (0, 0) circle (4pt);
\node at (4,0) {$A^*(\Mb_{g,n}) = R^*(\Mb_{g,n})$};
\draw[ultra thick] (0, -1) circle (4pt);
\node at (4,-1) {$A^*(\M_{g,n}) = R^*(\M_{g,n})$};
\filldraw (10, -.5) circle (4pt);
\node at (12,-.5) {$\Longrightarrow$};
\draw[ultra thick] (14, -.5) circle (4pt);

\node[scale = 1.3, color = red] at (10, -2.5) {$\boldsymbol{\times}$};
\node at (12,-2.5) {$\Longrightarrow$};
\node[scale = .75, color = red] at (14, -2.5) {$\boldsymbol{\times}$};

\node[scale = 1.3, color = red] at (0, -2) {$\boldsymbol{\times}$};
\node at (4,-2) {$A^*(\M_{g,n}) \neq R^*(\M_{g,n})$};
\node[scale = .75, color = red] at (0, -3) {$\boldsymbol{\times}$};
\node at (4,-3) {$A^*(\Mb_{g,n}) \neq R^*(\Mb_{g,n})$};
\end{tikzpicture}
\begin{tikzpicture}[scale = .8]

\filldraw[color=white] (3, 11) circle (4pt);
\filldraw[color=white] (4, 9) circle (4pt);
\filldraw[color=white] (5, 7) circle (4pt);
\filldraw[color=white] (6, 5) circle (4pt);

\filldraw[color=white] (7, 1) circle (4pt);
\filldraw[color=white] (7, 2) circle (4pt);
\filldraw[color=white] (7, 3) circle (4pt);
\draw[color=black,ultra thick] (7, 1) circle (4pt);
\draw[color=black,ultra thick] (7, 2) circle (4pt);
\draw[color=black,ultra thick] (7, 3) circle (4pt);

\node[scale = .85] at (1, -.7) {$1$};
\node[scale = .85] at (2, -.7) {$2$};
\node[scale = .85] at (3, -.7) {$3$};
\node[scale = .85] at (4, -.7) {$4$};
\node[scale = .85] at (5, -.7) {$5$};
\node[scale = .85] at (6, -.7) {$6$};
\node[scale = .85] at (7, -.7) {$7$};
\node[scale = .85] at (8, -.7) {$8$};
\node[scale = .85] at (9, -.7) {$9$};
\node[scale = .85] at (10, -.7) {$10$};
\node[scale = .85] at (11, -.7) {$11$};
\node[scale = .85] at (12, -.7) {$12$};
\node[scale = .85] at (13, -.7) {$13$};
\node[scale = .85] at (14, -.7) {$14$};
\node[scale = .85] at (15, -.7) {$15$};
\node[scale = .85] at (16, -.7) {$16$};
\node[scale = .85] at (17, -.7) {$17$};

\node[scale = .85] at (-.7, 1) {$1$};
\node[scale = .85] at (-.7, 2) {$2$};
\node[scale = .85] at (-.7, 3) {$3$};
\node[scale = .85] at (-.7, 4) {$4$};
\node[scale = .85] at (-.7, 5) {$5$};
\node[scale = .85] at (-.7, 6) {$6$};
\node[scale = .85] at (-.7, 7) {$7$};
\node[scale = .85] at (-.7, 8) {$8$};
\node[scale = .85] at (-.7, 9) {$9$};
\node[scale = .85] at (-.7, 10) {$10$};
\node[scale = .85] at (-.7, 11) {$11$};
\node[scale = .85] at (-.7, 12) {$12$};
\node[scale = .85] at (-.7, 13) {$13$};
\node[scale = .85] at (-.7, 14) {$14$};
\node[scale = .85] at (-.7, 15) {$15$};
\node[scale = .85] at (-.7, 16) {$16$};
\node[scale = .85] at (-.7, 17) {$17$};
\node[scale = .85] at (-.7, 18) {$18$};
\node[scale = .85] at (-.7, 19) {$19$};
\node[scale = .85] at (-.7, 20) {$20$};

\draw[->] (0, 0) -- (18, 0);
\draw[->] (0, 0) -- (0, 21);
\node[scale=1.2] at (18.4,0) {$g$};
\node[scale=1.2] at (0, 21.4) {$n$};
\draw (-.1, 1) -- (.1, 1);
\draw (-.1, 2) -- (.1, 2);
\draw (1, -.1) -- (1, .1);
\draw (2, -.1) -- (2, .1);
\draw (3, -.1) -- (3, .1);
\filldraw (0, 3) circle (4pt);
\filldraw (0, 4) circle (4pt);
\filldraw (0, 5) circle (4pt);
\filldraw (0, 6) circle (4pt);
\filldraw (0, 7) circle (4pt);
\filldraw (0, 8) circle (4pt);
\filldraw (0, 9) circle (4pt);
\filldraw (0, 10) circle (4pt);
\filldraw (0, 11) circle (4pt);
\filldraw (0, 12) circle (4pt);
\filldraw (0, 13) circle (4pt);
\filldraw (0, 14) circle (4pt);
\filldraw (0, 15) circle (4pt);
\filldraw (0, 16) circle (4pt);
\filldraw (0, 17) circle (4pt);
\filldraw (0, 18) circle (4pt);
\filldraw (0, 19) circle (4pt);
\filldraw (0, 20) circle (4pt);
\filldraw (1, 1) circle (4pt);
\filldraw (1, 2) circle (4pt);
\filldraw (1, 3) circle (4pt);
\filldraw (1, 4) circle (4pt);
\filldraw (1, 5) circle (4pt);
\filldraw (1, 6) circle (4pt);
\filldraw (1, 7) circle (4pt);
\filldraw (1, 8) circle (4pt);
\filldraw (1, 9) circle (4pt);
\filldraw (1, 10) circle (4pt);

\filldraw (2, 0) circle (4pt);
\filldraw (2, 1) circle (4pt);
\filldraw (3, 0) circle (4pt);

\node[scale = 1.3, color = red] at (1, 11) {$\boldsymbol{\times}$};
\node[scale = 1.3, color = red] at (1, 12) {$\boldsymbol{\times}$};
\node[scale = 1.3, color = red] at (1, 13) {$\boldsymbol{\times}$};
\node[scale = 1.3, color = red] at (1, 14) {$\boldsymbol{\times}$};
\node[scale = 1.3, color = red] at (1, 15) {$\boldsymbol{\times}$};
\node[scale = 1.3, color = red] at (1, 16) {$\boldsymbol{\times}$};
\node[scale = 1.3, color = red] at (1, 17) {$\boldsymbol{\times}$};
\node[scale = 1.3, color = red] at (1, 18) {$\boldsymbol{\times}$};
\node[scale = 1.3, color = red] at (1, 19) {$\boldsymbol{\times}$};
\node[scale = 1.3, color = red] at (1, 20) {$\boldsymbol{\times}$};

\node[scale = .75, color = red] at (2, 10) {$\boldsymbol{\times}$};
\node[scale = .75, color = red] at (2, 11) {$\boldsymbol{\times}$};
\node[scale = .75, color = red] at (2, 12) {$\boldsymbol{\times}$};
\node[scale = .75, color = red] at (2, 13) {$\boldsymbol{\times}$};
\node[scale = .75, color = red] at (2, 14) {$\boldsymbol{\times}$};
\node[scale = .75, color = red] at (2, 15) {$\boldsymbol{\times}$};
\node[scale = .75, color = red] at (2, 16) {$\boldsymbol{\times}$};
\node[scale = .75, color = red] at (2, 17) {$\boldsymbol{\times}$};
\node[scale = .75, color = red] at (2, 18) {$\boldsymbol{\times}$};
\node[scale = .75, color = red] at (2, 19) {$\boldsymbol{\times}$};
\node[scale = 1.3, color = red] at (2, 20) {$\boldsymbol{\times}$};

\node[scale = .75, color = red] at (3, 10) {$\boldsymbol{\times}$};
\node[scale = .75, color = red] at (3, 11) {$\boldsymbol{\times}$};
\node[scale = .75, color = red] at (3, 12) {$\boldsymbol{\times}$};
\node[scale = .75, color = red] at (3, 13) {$\boldsymbol{\times}$};
\node[scale = .75, color = red] at (3, 14) {$\boldsymbol{\times}$};
\node[scale = .75, color = red] at (3, 15) {$\boldsymbol{\times}$};
\node[scale = .75, color = red] at (3, 16) {$\boldsymbol{\times}$};
\node[scale = .75, color = red] at (3, 17) {$\boldsymbol{\times}$};
\node[scale = 1.3, color = red] at (3, 18) {$\boldsymbol{\times}$};
\node[scale = 1.3, color = red] at (3, 19) {$\boldsymbol{\times}$};
\node[scale = 1.3, color = red] at (3, 20) {$\boldsymbol{\times}$};

\node[scale = .75, color = red] at (4, 9) {$\boldsymbol{\times}$};
\node[scale = .75, color = red] at (4, 10) {$\boldsymbol{\times}$};
\node[scale = .75, color = red] at (4, 11) {$\boldsymbol{\times}$};
\node[scale = .75, color = red] at (4, 12) {$\boldsymbol{\times}$};
\node[scale = .75, color = red] at (4, 13) {$\boldsymbol{\times}$};
\node[scale = .75, color = red] at (4, 14) {$\boldsymbol{\times}$};
\node[scale = .75, color = red] at (4, 15) {$\boldsymbol{\times}$};
\node[scale = 1.3, color = red] at (4, 16) {$\boldsymbol{\times}$};
\node[scale = 1.3, color = red] at (4, 17) {$\boldsymbol{\times}$};
\node[scale = 1.3, color = red] at (4, 18) {$\boldsymbol{\times}$};
\node[scale = 1.3, color = red] at (4, 19) {$\boldsymbol{\times}$};
\node[scale = 1.3, color = red] at (4, 20) {$\boldsymbol{\times}$};

\node[scale = .75, color = red] at (5, 9) {$\boldsymbol{\times}$};
\node[scale = .75, color = red] at (5, 10) {$\boldsymbol{\times}$};
\node[scale = .75, color = red] at (5, 11) {$\boldsymbol{\times}$};
\node[scale = .75, color = red] at (5, 12) {$\boldsymbol{\times}$};
\node[scale = .75, color = red] at (5, 13) {$\boldsymbol{\times}$};
\node[scale = 1.3, color = red] at (5, 14) {$\boldsymbol{\times}$};
\node[scale = 1.3, color = red] at (5, 15) {$\boldsymbol{\times}$};
\node[scale = 1.3, color = red] at (5, 16) {$\boldsymbol{\times}$};
\node[scale = 1.3, color = red] at (5, 17) {$\boldsymbol{\times}$};
\node[scale = 1.3, color = red] at (5, 18) {$\boldsymbol{\times}$};
\node[scale = 1.3, color = red] at (5, 19) {$\boldsymbol{\times}$};
\node[scale = 1.3, color = red] at (5, 20) {$\boldsymbol{\times}$};

\node[scale = .75, color = red] at (6, 9) {$\boldsymbol{\times}$};
\node[scale = .75, color = red] at (6, 10) {$\boldsymbol{\times}$};
\node[scale = .75, color = red] at (6, 11) {$\boldsymbol{\times}$};
\node[scale = 1.3, color = red] at (6, 12) {$\boldsymbol{\times}$};
\node[scale = 1.3, color = red] at (6, 13) {$\boldsymbol{\times}$};
\node[scale = 1.3, color = red] at (6, 14) {$\boldsymbol{\times}$};
\node[scale = 1.3, color = red] at (6, 15) {$\boldsymbol{\times}$};
\node[scale = 1.3, color = red] at (6, 16) {$\boldsymbol{\times}$};
\node[scale = 1.3, color = red] at (6, 17) {$\boldsymbol{\times}$};
\node[scale = 1.3, color = red] at (6, 18) {$\boldsymbol{\times}$};
\node[scale = 1.3, color = red] at (6, 19) {$\boldsymbol{\times}$};
\node[scale = 1.3, color = red] at (6, 20) {$\boldsymbol{\times}$};

\node[scale = .75, color = red] at (7, 8) {$\boldsymbol{\times}$};
\node[scale = .75, color = red] at (7, 9) {$\boldsymbol{\times}$};
\node[scale = 1.3, color = red] at (7, 10) {$\boldsymbol{\times}$};
\node[scale = 1.3, color = red] at (7, 11) {$\boldsymbol{\times}$};
\node[scale = 1.3, color = red] at (7, 12) {$\boldsymbol{\times}$};
\node[scale = 1.3, color = red] at (7, 13) {$\boldsymbol{\times}$};
\node[scale = 1.3, color = red] at (7, 14) {$\boldsymbol{\times}$};
\node[scale = 1.3, color = red] at (7, 15) {$\boldsymbol{\times}$};
\node[scale = 1.3, color = red] at (7, 16) {$\boldsymbol{\times}$};
\node[scale = 1.3, color = red] at (7, 17) {$\boldsymbol{\times}$};
\node[scale = 1.3, color = red] at (7, 18) {$\boldsymbol{\times}$};
\node[scale = 1.3, color = red] at (7, 19) {$\boldsymbol{\times}$};
\node[scale = 1.3, color = red] at (7, 20) {$\boldsymbol{\times}$};

\node[scale =  1.3, color = red] at (8, 8) {$\boldsymbol{\times}$};
\node[scale = 1.3, color = red] at (8, 9) {$\boldsymbol{\times}$};
\node[scale = 1.3, color = red] at (8, 10) {$\boldsymbol{\times}$};
\node[scale = 1.3, color = red] at (8, 11) {$\boldsymbol{\times}$};
\node[scale = 1.3, color = red] at (8, 12) {$\boldsymbol{\times}$};
\node[scale = 1.3, color = red] at (8, 13) {$\boldsymbol{\times}$};
\node[scale = 1.3, color = red] at (8, 14) {$\boldsymbol{\times}$};
\node[scale = 1.3, color = red] at (8, 15) {$\boldsymbol{\times}$};
\node[scale = 1.3, color = red] at (8, 16) {$\boldsymbol{\times}$};
\node[scale = 1.3, color = red] at (8, 17) {$\boldsymbol{\times}$};
\node[scale = 1.3, color = red] at (8, 18) {$\boldsymbol{\times}$};
\node[scale = 1.3, color = red] at (8, 19) {$\boldsymbol{\times}$};
\node[scale = 1.3, color = red] at (8, 20) {$\boldsymbol{\times}$};

\node[scale = 1.3, color = red] at (9, 6) {$\boldsymbol{\times}$};
\node[scale = 1.3, color = red] at (9, 7) {$\boldsymbol{\times}$};
\node[scale = 1.3, color = red] at (9, 8) {$\boldsymbol{\times}$};
\node[scale = 1.3, color = red] at (9, 9) {$\boldsymbol{\times}$};
\node[scale = 1.3, color = red] at (9, 10) {$\boldsymbol{\times}$};
\node[scale = 1.3, color = red] at (9, 11) {$\boldsymbol{\times}$};
\node[scale = 1.3, color = red] at (9, 12) {$\boldsymbol{\times}$};
\node[scale = 1.3, color = red] at (9, 13) {$\boldsymbol{\times}$};
\node[scale = 1.3, color = red] at (9, 14) {$\boldsymbol{\times}$};
\node[scale = 1.3, color = red] at (9, 15) {$\boldsymbol{\times}$};
\node[scale = 1.3, color = red] at (9, 16) {$\boldsymbol{\times}$};
\node[scale = 1.3, color = red] at (9, 17) {$\boldsymbol{\times}$};
\node[scale = 1.3, color = red] at (9, 18) {$\boldsymbol{\times}$};
\node[scale = 1.3, color = red] at (9, 19) {$\boldsymbol{\times}$};
\node[scale = 1.3, color = red] at (9, 20) {$\boldsymbol{\times}$};

\node[scale = 1.3, color = red] at (10, 4) {$\boldsymbol{\times}$};
\node[scale = 1.3, color = red] at (10, 5) {$\boldsymbol{\times}$};
\node[scale = 1.3, color = red] at (10, 6) {$\boldsymbol{\times}$};
\node[scale = 1.3, color = red] at (10, 7) {$\boldsymbol{\times}$};
\node[scale = 1.3, color = red] at (10, 8) {$\boldsymbol{\times}$};
\node[scale = 1.3, color = red] at (10, 9) {$\boldsymbol{\times}$};
\node[scale = 1.3, color = red] at (10, 10) {$\boldsymbol{\times}$};
\node[scale = 1.3, color = red] at (10, 11) {$\boldsymbol{\times}$};
\node[scale = 1.3, color = red] at (10, 12) {$\boldsymbol{\times}$};
\node[scale = 1.3, color = red] at (10, 13) {$\boldsymbol{\times}$};
\node[scale = 1.3, color = red] at (10, 14) {$\boldsymbol{\times}$};
\node[scale = 1.3, color = red] at (10, 15) {$\boldsymbol{\times}$};
\node[scale = 1.3, color = red] at (10, 16) {$\boldsymbol{\times}$};
\node[scale = 1.3, color = red] at (10, 17) {$\boldsymbol{\times}$};
\node[scale = 1.3, color = red] at (10, 18) {$\boldsymbol{\times}$};
\node[scale = 1.3, color = red] at (10, 19) {$\boldsymbol{\times}$};

\node[scale = 1.3, color = red] at (11, 2) {$\boldsymbol{\times}$};
\node[scale = 1.3, color = red] at (11, 3) {$\boldsymbol{\times}$};
\node[scale = 1.3, color = red] at (11, 4) {$\boldsymbol{\times}$};
\node[scale = 1.3, color = red] at (11, 5) {$\boldsymbol{\times}$};
\node[scale = 1.3, color = red] at (11, 6) {$\boldsymbol{\times}$};
\node[scale = 1.3, color = red] at (11, 7) {$\boldsymbol{\times}$};
\node[scale = 1.3, color = red] at (11, 8) {$\boldsymbol{\times}$};
\node[scale = 1.3, color = red] at (11, 9) {$\boldsymbol{\times}$};
\node[scale = 1.3, color = red] at (11, 10) {$\boldsymbol{\times}$};
\node[scale = 1.3, color = red] at (11, 11) {$\boldsymbol{\times}$};
\node[scale = 1.3, color = red] at (11, 12) {$\boldsymbol{\times}$};
\node[scale = 1.3, color = red] at (11, 13) {$\boldsymbol{\times}$};
\node[scale = 1.3, color = red] at (11, 14) {$\boldsymbol{\times}$};
\node[scale = 1.3, color = red] at (11, 15) {$\boldsymbol{\times}$};
\node[scale = 1.3, color = red] at (11, 16) {$\boldsymbol{\times}$};
\node[scale = 1.3, color = red] at (11, 17) {$\boldsymbol{\times}$};
\node[scale = 1.3, color = red] at (11, 18) {$\boldsymbol{\times}$};

\node[scale = 1.3, color = red] at (12, 0) {$\boldsymbol{\times}$};
\node[scale = 1.3, color = red] at (12, 1) {$\boldsymbol{\times}$};
\node[scale = 1.3, color = red] at (12, 2) {$\boldsymbol{\times}$};
\node[scale = 1.3, color = red] at (12, 3) {$\boldsymbol{\times}$};
\node[scale = 1.3, color = red] at (12, 4) {$\boldsymbol{\times}$};
\node[scale = 1.3, color = red] at (12, 5) {$\boldsymbol{\times}$};
\node[scale = 1.3, color = red] at (12, 6) {$\boldsymbol{\times}$};
\node[scale = 1.3, color = red] at (12, 7) {$\boldsymbol{\times}$};
\node[scale = 1.3, color = red] at (12, 8) {$\boldsymbol{\times}$};
\node[scale = 1.3, color = red] at (12, 9) {$\boldsymbol{\times}$};
\node[scale = 1.3, color = red] at (12, 10) {$\boldsymbol{\times}$};
\node[scale = 1.3, color = red] at (12, 11) {$\boldsymbol{\times}$};
\node[scale = 1.3, color = red] at (12, 12) {$\boldsymbol{\times}$};
\node[scale = 1.3, color = red] at (12, 13) {$\boldsymbol{\times}$};
\node[scale = 1.3, color = red] at (12, 14) {$\boldsymbol{\times}$};
\node[scale = 1.3, color = red] at (12, 15) {$\boldsymbol{\times}$};
\node[scale = 1.3, color = red] at (12, 16) {$\boldsymbol{\times}$};
\node[scale = 1.3, color = red] at (12, 17) {$\boldsymbol{\times}$};

\node[scale = .75, color = red] at (13, 0) {$\boldsymbol{\times}$};
\node[scale = .75, color = red] at (13, 1) {$\boldsymbol{\times}$};
\node[scale = .75, color = red] at (13, 2) {$\boldsymbol{\times}$};
\node[scale = .75, color = red] at (13, 3) {$\boldsymbol{\times}$};
\node[scale = .75, color = red] at (13, 4) {$\boldsymbol{\times}$};
\node[scale = .75, color = red] at (13, 5) {$\boldsymbol{\times}$};
\node[scale = 1.3, color = red] at (13, 6) {$\boldsymbol{\times}$};
\node[scale = 1.3, color = red] at (13, 7) {$\boldsymbol{\times}$};
\node[scale = 1.3, color = red] at (13, 8) {$\boldsymbol{\times}$};
\node[scale = 1.3, color = red] at (13, 9) {$\boldsymbol{\times}$};
\node[scale = 1.3, color = red] at (13, 10) {$\boldsymbol{\times}$};
\node[scale = 1.3, color = red] at (13, 11) {$\boldsymbol{\times}$};
\node[scale = 1.3, color = red] at (13, 12) {$\boldsymbol{\times}$};
\node[scale = 1.3, color = red] at (13, 13) {$\boldsymbol{\times}$};
\node[scale = 1.3, color = red] at (13, 14) {$\boldsymbol{\times}$};
\node[scale = 1.3, color = red] at (13, 15) {$\boldsymbol{\times}$};
\node[scale = 1.3, color = red] at (13, 16) {$\boldsymbol{\times}$};

\node[scale = .75, color = red] at (14, 0) {$\boldsymbol{\times}$};
\node[scale = .75, color = red] at (14, 1) {$\boldsymbol{\times}$};
\node[scale = .75, color = red] at (14, 2) {$\boldsymbol{\times}$};
\node[scale = .75, color = red] at (14, 3) {$\boldsymbol{\times}$};
\node[scale = 1.3, color = red] at (14, 4) {$\boldsymbol{\times}$};
\node[scale =  1.3, color = red] at (14, 5) {$\boldsymbol{\times}$};
\node[scale = 1.3, color = red] at (14, 6) {$\boldsymbol{\times}$};
\node[scale = 1.3, color = red] at (14, 7) {$\boldsymbol{\times}$};
\node[scale = 1.3, color = red] at (14, 8) {$\boldsymbol{\times}$};
\node[scale = 1.3, color = red] at (14, 9) {$\boldsymbol{\times}$};
\node[scale = 1.3, color = red] at (14, 10) {$\boldsymbol{\times}$};
\node[scale = 1.3, color = red] at (14, 11) {$\boldsymbol{\times}$};
\node[scale = 1.3, color = red] at (14, 12) {$\boldsymbol{\times}$};
\node[scale = 1.3, color = red] at (14, 13) {$\boldsymbol{\times}$};
\node[scale = 1.3, color = red] at (14, 14) {$\boldsymbol{\times}$};
\node[scale = 1.3, color = red] at (14, 15) {$\boldsymbol{\times}$};

\node[scale = .75, color = red] at (15, 0) {$\boldsymbol{\times}$};
\node[scale = .75, color = red] at (15, 1) {$\boldsymbol{\times}$};
\node[scale = 1.3, color = red] at (15, 2) {$\boldsymbol{\times}$};
\node[scale = 1.3, color = red] at (15, 3) {$\boldsymbol{\times}$};
\node[scale = 1.3, color = red] at (15, 4) {$\boldsymbol{\times}$};
\node[scale =  1.3, color = red] at (15, 5) {$\boldsymbol{\times}$};
\node[scale = 1.3, color = red] at (15, 6) {$\boldsymbol{\times}$};
\node[scale = 1.3, color = red] at (15, 7) {$\boldsymbol{\times}$};
\node[scale = 1.3, color = red] at (15, 8) {$\boldsymbol{\times}$};
\node[scale = 1.3, color = red] at (15, 9) {$\boldsymbol{\times}$};
\node[scale = 1.3, color = red] at (15, 10) {$\boldsymbol{\times}$};
\node[scale = 1.3, color = red] at (15, 11) {$\boldsymbol{\times}$};
\node[scale = 1.3, color = red] at (15, 12) {$\boldsymbol{\times}$};
\node[scale = 1.3, color = red] at (15, 13) {$\boldsymbol{\times}$};
\node[scale = 1.3, color = red] at (15, 14) {$\boldsymbol{\times}$};

\node[scale = 1.3, color = red] at (16, 0) {$\boldsymbol{\times}$};
\node[scale = 1.3, color = red] at (16, 1) {$\boldsymbol{\times}$};
\node[scale = 1.3, color = red] at (16, 2) {$\boldsymbol{\times}$};
\node[scale = 1.3, color = red] at (16, 3) {$\boldsymbol{\times}$};
\node[scale = 1.3, color = red] at (16, 4) {$\boldsymbol{\times}$};
\node[scale =  1.3, color = red] at (16, 5) {$\boldsymbol{\times}$};
\node[scale = 1.3, color = red] at (16, 6) {$\boldsymbol{\times}$};
\node[scale = 1.3, color = red] at (16, 7) {$\boldsymbol{\times}$};
\node[scale = 1.3, color = red] at (16, 8) {$\boldsymbol{\times}$};
\node[scale = 1.3, color = red] at (16, 9) {$\boldsymbol{\times}$};
\node[scale = 1.3, color = red] at (16, 10) {$\boldsymbol{\times}$};
\node[scale = 1.3, color = red] at (16, 11) {$\boldsymbol{\times}$};
\node[scale = 1.3, color = red] at (16, 12) {$\boldsymbol{\times}$};
\node[scale = 1.3, color = red] at (16, 13) {$\boldsymbol{\times}$};

\node[scale = 1.3, color = red] at (17, 0) {$\boldsymbol{\times}$};
\node[scale = 1.3, color = red] at (17, 1) {$\boldsymbol{\times}$};
\node[scale = 1.3, color = red] at (17, 2) {$\boldsymbol{\times}$};
\node[scale = 1.3, color = red] at (17, 3) {$\boldsymbol{\times}$};
\node[scale = 1.3, color = red] at (17, 4) {$\boldsymbol{\times}$};
\node[scale =  1.3, color = red] at (17, 5) {$\boldsymbol{\times}$};
\node[scale = 1.3, color = red] at (17, 6) {$\boldsymbol{\times}$};
\node[scale = 1.3, color = red] at (17, 7) {$\boldsymbol{\times}$};
\node[scale = 1.3, color = red] at (17, 8) {$\boldsymbol{\times}$};
\node[scale = 1.3, color = red] at (17, 9) {$\boldsymbol{\times}$};
\node[scale = 1.3, color = red] at (17, 10) {$\boldsymbol{\times}$};
\node[scale = 1.3, color = red] at (17, 11) {$\boldsymbol{\times}$};
\node[scale = 1.3, color = red] at (17, 12) {$\boldsymbol{\times}$};

\draw[color=red, ultra thick, ->] (13, 17) -- (14.5, 18.5);

\filldraw[color=black] (2,2) circle (4pt);
\filldraw[color=black] (2,3) circle (4pt);
\filldraw[color=black] (2,4) circle (4pt);
\filldraw[color=black] (2,5) circle (4pt);
\filldraw[color=black] (2,6) circle (4pt);
\filldraw[color=black] (2,7) circle (4pt);
\filldraw[color=black] (2,8) circle (4pt);
\filldraw[color=black] (2,9) circle (4pt);
\draw[color=black,ultra thick] (2,10) circle (4pt);

\filldraw[color=black] (3,1) circle (4pt);
\filldraw[color=black] (3,2) circle (4pt);
\filldraw[color=black] (3,3) circle (4pt);
\filldraw[color=black] (3,4) circle (4pt);
\filldraw[color=black] (3,5) circle (4pt);
\filldraw[color=black] (3,6) circle (4pt);
\filldraw[color=black] (3,7) circle (4pt);
\filldraw[color=black] (3,8) circle (4pt);
\draw[color=black,ultra thick] (3,9) circle (4pt);
\draw[color=black,ultra thick] (3,10) circle (4pt);
\draw[color=black,ultra thick] (3,11) circle (4pt);

\filldraw[color=black] (4,0) circle (4pt);
\filldraw[color=black] (4,1) circle (4pt);
\filldraw[color=black] (4,2) circle (4pt);
\filldraw[color=black] (4,3) circle (4pt);
\filldraw[color=black] (4,4) circle (4pt);
\filldraw[color=black] (4,5) circle (4pt);
\filldraw[color=black] (4,6) circle (4pt);
\draw[color=black,ultra thick] (4,7) circle (4pt);
\draw[color=black,ultra thick] (4,8) circle (4pt);
\draw[color=black,ultra thick] (4,9) circle (4pt);
\draw[color=black,ultra thick] (4,10) circle (4pt);
\draw[color=black,ultra thick] (4,11) circle (4pt);

\filldraw[color=black] (5,0) circle (4pt);
\filldraw[color=black] (5,1) circle (4pt);
\filldraw[color=black] (5,2) circle (4pt);
\filldraw[color=black] (5,3) circle (4pt);
\filldraw[color=black] (5,4) circle (4pt);
\draw[color=black,ultra thick] (5,5) circle (4pt);
\draw[color=black,ultra thick] (5,6) circle (4pt);
\draw[color=black,ultra thick] (5,7) circle (4pt);
\draw[color=black,ultra thick] (5,8) circle (4pt);
\draw[color=black,ultra thick] (5,9) circle (4pt);

\filldraw[color=black] (6,0) circle (4pt);
\filldraw[color=black] (6,1) circle (4pt);
\filldraw[color=black] (6,2) circle (4pt);
\draw[color=black,ultra thick] (6,3) circle (4pt);
\draw[color=black,ultra thick] (6,4) circle (4pt);
\draw[color=black,ultra thick] (6,5) circle (4pt);

\filldraw[color=black] (7,0) circle (4pt);

\filldraw[color=white] (8, 0) circle (4pt);
\filldraw[color=white] (9, 0) circle (4pt);
\draw[ultra thick] (8, 0) circle (4pt);
\draw[ultra thick] (9, 0) circle (4pt);

\end{tikzpicture}
\end{center}
\end{figure}

The implications next to the symbol key are from Lemma \ref{easy} and its contrapositive.
When the chart is completed, every position will be filled with exactly one of the following symbols

\begin{center}
\begin{tikzpicture}[scale=.8]
\filldraw (0, 0) circle (4pt);
\node[scale = .75, color = red] at (4, 0) {$\boldsymbol{\times}$};
\draw[ultra thick] (4,0) circle (4pt);
\node[scale = 1.3, color = red] at (8, 0) {$\boldsymbol{\times}$};
\end{tikzpicture}
\end{center}

\begin{rem} \label{rem:ckgp}
Currently, in every case where a filled circle appears in the chart, $\Mb_{g,n}$ is known to have the CKgP. Similarly, in every instance where an open circle appears in the chart, $\M_{g,n}$ is known to have the CKgP.
\end{rem}

\subsection{Genus 0}
In genus $0$, we have that $\M_{0,n}$ is an open subset of affine space, so the Chow ring is trivial $A^*(\M_{0,n})  = \qq$. It follows quickly from excision and the structure of the boundary that the Chow groups
$A^*(\Mb_{0,n})$ are generated by fundamental classes of boundary strata and are hence tautological. The relations among boundary strata and the ring structure of $A^*(\Mb_{0,n})$ is computed in work of Keel \cite{Keel}.

\subsection{Genus 1}
The fact that $A^*(\Mb_{1,n}) = R^*(\Mb_{1,n})$ for $n \leq 10$ was established in the thesis of Belorousski \cite{Bel}. Meanwhile, it is known that $H^{11}(\Mb_{1,11}) \neq 0$. This means that $H^*(\Mb_{1,11}) \neq RH^*(\Mb_{1,11})$ and so by Corollary \ref{rhcor}, $A^*(\Mb_{1,11}) \neq R^*(\Mb_{1,11})$. Moreover, the presence of odd cohomology means that the cycle class map cannot be an isomorphism, so by Lemma \ref{cl_iso}, we conclude that $A^*(\Mb_{1,11})$ is an uncountably infinite dimensional vector space over $\qq$. 

\begin{rem}
Going further, it is known that $H^{11,0}(\Mb_{1,11}) \neq 0$, see \cite[p. 306]{FP} for an explicit construction of a holomorphic form.
On a smooth, proper variety $X$, the presence of a holomorphic $p$ form for $p > 1$ implies that the Chow group of zero cycles is so large it cannot even be dominated by a finite-dimensional variety.
References and an explanation are given in \cite[Remark 1.1]{GraberVakil} for why the same conclusion holds for the smooth, proper Deligne--Mumford stack $\Mb_{1,11}$.
\end{rem}

Because the push forwards along forgetful maps are surjective, the fact that
$A^*(\Mb_{1,11})$ is infinite-dimensional implies that $A^*(\Mb_{1,n})$ is infinite-dimensional for all $n \geq 11$.
Some additional care is needed to argue that the Chow rings of the open moduli spaces are also infinite-dimensional, thus giving the large red exes in the rest of the genus $1$ column.

\begin{lem}
For all $n \geq 11$, we have $A^*(\M_{1,n})$ is an infinite-dimensional vector space. In particular, $A^*(\M_{1,n}) \neq R^*(\M_{1,n})$.
\end{lem}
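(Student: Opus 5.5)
The plan is to first handle $n = 11$ by excision from $\Mb_{1,11}$, then propagate to all $n \geq 11$ by an injectivity argument along the forgetful maps.

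\textbf{Step 1: the case $n=11$.} Consider the excision sequence
\[A^{*-1}(\widetilde{\partial \M_{1,11}}) \to A^*(\Mb_{1,11}) \to A^*(\M_{1,11}) \to 0.\]
The normalization of the boundary is a disjoint union of two kinds of spaces. The first is $\Mb_{0,13}$. The second is the products $\Mb_{1,m+1} \times \Mb_{0,12-m}$ with $m+1 \leq 10$, together with $\Mb_{0,a}\times\Mb_{1,b}$-type products with $b\leq 10$.

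Every genus $1$ factor has at most $10$ markings. So by Belorousski and Keel these factors have tautological Chow rings. By the filled circles in the chart and Remark \ref{rem:ckgp}, they also have the CKgP. Hence the Chow ring of each product is spanned by products of tautological classes, as in the proof of Lemma \ref{assemble}, and is therefore finite-dimensional, since tautological rings are spanned by finitely many decorated boundary strata.

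It follows that the image of the boundary in $A^*(\Mb_{1,11})$ is finite-dimensional. On the other hand, $A^*(\Mb_{1,11})$ is infinite-dimensional, as shown above via $H^{11}(\Mb_{1,11})\neq 0$ and Lemma \ref{cl_iso}. The quotient $A^*(\M_{1,11})$ is therefore infinite-dimensional.

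\textbf{Step 2: induction on $n$.} I will show that for $n \geq 1$ the flat pullback $\pi^*\colon A^*(\M_{1,n}) \to A^*(\M_{1,n+1})$ along the forgetful map is injective.

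Let $f\colon C \to \M_{1,n}$ be the universal (smooth, proper) curve with sections $\sigma_1,\dots,\sigma_n$. Then $\M_{1,n+1} = C \smallsetminus \bigcup_i \sigma_i(\M_{1,n})$. By excision, the kernel of restriction $A^*(C)\to A^*(\M_{1,n+1})$ is spanned by classes $\sigma_{i*}\beta_i$. So suppose $f^*\alpha = \sum_i \sigma_{i*}\beta_i$ in $A^*(C)$.

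Take $D = c_1(\omega_f(\sum_i \sigma_i))$. By the residue isomorphism, $\sigma_i^*\omega_f(\sigma_i)$ is trivial, and the sections are disjoint, so $\sigma_i^*D = 0$. Hence $D\cdot \sigma_{i*}\beta_i = \sigma_{i*}(\beta_i\cdot\sigma_i^*D) = 0$. Meanwhile the projection formula gives
\[f_*(D \cdot f^*\alpha) = \deg(D|_{\text{fiber}})\,\alpha = (2g-2+n)\,\alpha = n\,\alpha.\]
So $n\alpha = 0$, hence $\alpha = 0$ with rational coefficients. Thus $\pi^*$ is injective, and infinite-dimensionality propagates from $\M_{1,11}$ to all $\M_{1,n}$ with $n \geq 11$.

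\textbf{Final deduction.} $R^*(\M_{1,n})$ is a quotient of the finite-dimensional $R^*(\Mb_{1,n})$, so it is finite-dimensional and cannot equal $A^*(\M_{1,n})$.

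\textbf{Main obstacle.} The delicate point is Step 1: I need the CKgP and tautological Chow rings for all $\Mb_{1,m}$ with $m \leq 10$ and all $\Mb_{0,m}$, and I am citing these as known, recorded by the filled circles in the chart and Remark \ref{rem:ckgp}. The residue computation in Step 2 is routine once the correct twisted class $D$ is chosen.
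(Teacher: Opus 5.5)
Your proof is correct. Step 1 is essentially the paper's argument for $n=11$. The paper only uses the CKgP of the genus $0$ factors $\Mb_{0,m}$, which already makes $A^*(\Mb_{1,A\cup p})\otimes A^*(\Mb_{0,A^c\cup p'})\to A^*(\Mb_{1,A\cup p}\times\Mb_{0,A^c\cup p'})$ surjective. Also citing it for the genus $1$ factors is harmless.

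Your Step 2 takes a genuinely different route. The paper never proves injectivity on the open spaces. It argues by contradiction on the compactifications:
\begin{itemize}
\item it takes the minimal $i$ with $A^i(\Mb_{1,n})$ infinite-dimensional;
\item it uses that pullback along $\Mb_{1,n+1}\to\Mb_{1,n}$ is injective, because the map has a section;
\item it reruns the boundary/CKgP estimate of Step 1 for $\partial\M_{1,n+1}$ in codimension $i-1$, using minimality of $i$ and injectivity of forgetful pullbacks to keep every $A^j(\Mb_{1,A\cup p})$ with $j\le i-1$ finite-dimensional.
\end{itemize}

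You instead prove directly that $\pi^*\colon A^*(\M_{1,n})\to A^*(\M_{1,n+1})$ is injective. You work on the proper universal curve $C$ and use excision to write the kernel of $A^*(C)\to A^*(\M_{1,n+1})$ as $\sum_i\sigma_{i*}(\cdot)$. You then kill it with $D=c_1(\omega_f(\sum_i\sigma_i))$: its restriction to each section vanishes by the residue isomorphism and disjointness of the sections, while $f_*D=n\cdot[\M_{1,n}]$.

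Your route is cleaner and more self-contained. The boundary only enters once, at $n=11$, and the same argument shows injectivity for every forgetful map $\M_{g,n+1}\to\M_{g,n}$, since $2g-2+n>0$. So it would propagate infinite-dimensionality of $A^*(\M_{g,n})$ in any genus. The paper's route uses only the tools already set up in the survey (excision, CKgP, the section of the forgetful map) and stays on the compactified spaces. The cost is a somewhat roundabout minimal-degree contradiction that has to re-examine the boundary at every step.
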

\begin{proof}
We first prove the claim for $n = 11$. We know that $A^*(\Mb_{1,11})$ is infinite-dimensional, so by excision, it suffices to show that $A^*(\partial \M_{1,11})$ is finite-dimensional. The boundary 
of $\Mb_{1,11}$ is the image of gluing maps from products of genus $0$ moduli spaces and genus $1$ moduli spaces with at most $10$ markings. All genus $0$ moduli spaces have the CKgP, we have the following sequence of surjective maps
\begin{center}
\begin{tikzcd}
A^*(\Mb_{0,13}) \oplus  \bigoplus_{A \subset \{1,\ldots, 11\}} A^*(\Mb_{1,A \cup p}) \otimes A^*(\Mb_{0,A^c \cup p'}) \arrow{d} \\
A^*(\Mb_{0,13}) \oplus  \bigoplus_{A \subset \{1,\ldots, 11\}} A^*(\Mb_{1,A \cup p} \times \Mb_{0,A^c \cup p'}) \arrow{d} \\
A^*(\partial \M_{1,11}). 
\end{tikzcd}
\end{center}
In the sum in the top row, we sum over $A$ such that $|A^c| \geq 2$ so that the genus $0$ curve that is glued to the genus $1$ curve is stable.
Thus, $|A \cup p| \leq 10$, so  $A^*(\Mb_{1,A \cup p}) = R^*(\Mb_{1,A \cup p})$ is finite-dimensional. Hence, the 
vector space in the top row above is finite-dimensional. It follows that $A^*(\partial \M_{1,11})$ is finite-dimensional, which completes the claim for $n = 11$.

Now assume for contradiction that the lemma is false. Then, there exists some $n \geq 11$ such that $A^*(\M_{1,n})$ is an infinite-dimensional vector space but $A^*(\M_{1,n+1})$ is finite-dimensional.
Because $A^*(\M_{1,n})$ is infinite-dimensional, so is $A^*(\Mb_{1,n})$. Let $i$ be the minimal index such that $A^i(\Mb_{1,n})$ is infinite-dimensional. Because $\Mb_{1,n+1} \to \Mb_{1,n}$ has a section, the pullback map is injective. Hence, $A^i(\Mb_{1,n+1})$ is infinite-dimensional. By excision, it suffices to show that $A^{i-1}(\partial \M_{1,n+1})$ is finite-dimensional, as this will contradict the assumption that $A^*(\M_{1,n+1})$ is finite-dimensional.

Because all $\Mb_{0,m}$ have the CKgP, we have a sequence of surjective maps
\begin{center}
\begin{tikzcd}
A^{i-1}(\Mb_{0,n+3}) \oplus  \bigoplus_{A \subset \{1,\ldots, n+1\}} \bigoplus_{j+k = i-1} A^j(\Mb_{1,A \cup p}) \otimes A^k(\Mb_{0,A^c \cup p'}) \arrow{d} \\
A^{i-1}(\Mb_{0,n+3}) \oplus  \bigoplus_{A \subset \{1,\ldots, n+1\}} A^{i-1}(\Mb_{1,A \cup p} \times \Mb_{0,A^c \cup p'}) \arrow{d} \\
A^{i-1}(\partial \M_{1,n+1}). 
\end{tikzcd}
\end{center}
Here, the sum runs over $A$ such that $|A^c| \geq 2$, so $|A \cup p| \leq n$.
By the minimality of $i$, we know that $A^j(\Mb_{1,n})$ is finite-dimensional for all $j \leq i - 1$. 
Because pullbacks along the forgetful maps are injective, it follows that $A^j(\Mb_{1,A \cup p})$ is finite dimensional for all $|A \cup p| \leq n$ and $j \leq i - 1$. 
Thus, the vector space at the top of the previous display is finite-dimensional. We conclude that $A^{i-1}(\partial \M_{1,n+1})$ is finite-dimensional, as desired.
\end{proof}

Since the Chow groups are so large, there are plenty of non-tautological classes in $A^*(\Mb_{1,n})$. Nevertheless, the following result implies that their images under the cycle class map are all tautological in cohomology.

\begin{thm}[Petersen \cite{Pg1}] \label{g1}
We have $H^{2k}(\Mb_{1,n}) = RH^{2k}(\Mb_{1,n})$, which is spanned by fundamental classes of boundary strata.
\end{thm}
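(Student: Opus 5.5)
We argue by induction on $n$, using the weight spectral sequence of Section \ref{wt} together with knowledge of the cohomology of the open moduli spaces $\M_{1,n}$. The aim is to show that every class in $H^{2k}(\Mb_{1,n})$ is either supported on the boundary and pushed forward from even cohomology of smaller moduli spaces, or comes from the interior in a way that forces it to be the fundamental class $1 \in H^0$. The base cases $\Mb_{1,1}$ and $\Mb_{1,2}$ are handled directly: their cohomology is known to be spanned by $1$, boundary points, and boundary divisors. Genus $0$ factors are covered by Keel's theorem, since $H^*(\Mb_{0,m})$ is spanned by boundary strata classes.

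\textbf{Step 1: interior contributions.} Use the Gysin/excision sequence
\[\bigoplus_{\Gamma,\ |E(\Gamma)|=1} H^{2k-2}(\Mb_\Gamma) \to H^{2k}(\Mb_{1,n}) \to W_{2k}H^{2k}(\M_{1,n}) \to 0.\]
Its exactness at the middle term follows from purity: the image of $H^{2k}(\Mb_{1,n})$ in $H^{2k}(\M_{1,n})$ is exactly $W_{2k}$, and the kernel is the image of the Gysin map from the normalized boundary. The key input is the vanishing of $W_{2k}H^{2k}(\M_{1,n})$ for $k>0$. To see this, study $\M_{1,n}$ via the fibration $\M_{1,n}\to\M_{1,1}$, whose fibers are configuration spaces of $n-1$ points on a punctured elliptic curve $E$. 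The cohomology of $\M_{1,n}$ then comes from $H^*(\M_{1,1}, V)$ for local systems $V$ built from $\Sym^a H^1(E)$. By Eichler--Shimura, $H^1(\M_{1,1},\Sym^{a}\mathbb{V})$ consists of cusp form parts of weight $a+1$, which live only in odd degree $1$, together with Eisenstein parts of weight $2a+2$ strictly above the lowest weight. Combining this with the Leray/Totaro spectral sequence for configuration spaces, whose nontrivial differentials only raise weight relative to degree, should show that the lowest weight part of $H^{2k}(\M_{1,n})$ vanishes for $k\ge1$.

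I expect this to be the main obstacle. One must control weights carefully through the configuration-space spectral sequence and check that no pure-weight even-degree class survives. A cleaner route I would try first is to show that $\M_{1,n}$ is stratified by pieces that are quotients of tori over $\M_{1,1}$-type bases. One could then adapt Lemma \ref{open-ckgp}-style arguments to show that $W_{2k}H^{2k}(\M_{1,n})$ is spanned by algebraic classes. Those algebraic classes come from $A^*(\M_{1,n})$, which is trivial in positive degree after rationally killing $\lambda_1$.

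\textbf{Step 2: boundary contributions.} Given Step 1, every class in $H^{2k}(\Mb_{1,n})$ with $k>0$ is a Gysin pushforward of classes in $H^{2k-2}$ of products $\Mb_{1,A\cup p}\times\Mb_{0,A^c\cup p'}$ or of $\Mb_{0,n+2}$. Even cohomology of such a product decomposes by Künneth into pieces $H^{i}\otimes H^{j}$ with $i+j = 2k-2$. Since $H^{\mathrm{odd}}(\Mb_{0,m})=0$, only even $i$ survive on the genus $1$ factor. By induction on $n$, those classes are spanned by boundary strata classes. Pushforward of products of strata classes under gluing maps is again a boundary stratum class, which concludes the proof, and shows in addition that no $\psi$ or $\kappa$ decorations are needed.
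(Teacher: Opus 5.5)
Your architecture is sound. By Deligne's weight argument, the restriction $H^{2k}(\Mb_{1,n})\to H^{2k}(\M_{1,n})$ has image $W_{2k}H^{2k}(\M_{1,n})$ and kernel equal to the Gysin image from the normalized boundary. Your Step 2 (Künneth, $H^{\mathrm{odd}}(\Mb_{0,m})=0$, Keel, induction on $n$) is fine. But this makes the theorem \emph{equivalent} to the vanishing $W_{2k}H^{2k}(\M_{1,n})=0$ for $k\geq 1$, and that is exactly the step you leave at ``should show.''

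Your sketch also misses the term where the difficulty lies. Since $\SL_2(\zz)$ is virtually free, the Leray spectral sequence for $\pi\colon\M_{1,n}\to\M_{1,1}$ degenerates. So $H^{2k}(\M_{1,n})$ is an extension of $H^0(\M_{1,1},R^{2k}\pi_*\qq)$ by $H^1(\M_{1,1},R^{2k-1}\pi_*\qq)$.

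Eichler--Shimura only controls the $H^1$ term, and even there you need a parity argument you do not make. A class of weight $q+1$ in $H^1(\M_{1,1},R^q\pi_*\qq)$ must be cuspidal and come from a summand $\mathbb{V}_a(-b)$ of $\gr^W_q R^q\pi_*\qq$ with $a+2b=q$. Since $-1\in\SL_2(\zz)$ acts by $(-1)^a$, only even $a$ contribute, so $q$ is even and these classes sit in odd degree $q+1$. Eisenstein classes have weight $q+a+2>q+1$.

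The $H^0$ term is the monodromy invariants of the pure part $W_{2k}H^{2k}$ of the fibre $F(E\smallsetminus 0,n-1)$. This is precisely where pure even-degree classes could live, and you do not address it. An argument along these lines is needed:
\begin{enumerate}
\item The pure part is the image of $H^{2k}(E^{n-1})$, and by semisimplicity, invariants of this image are the image of invariants.
\item By the first fundamental theorem for $\SL_2$, the invariants in $H^*(E^{n-1})$ are generated by the point classes $[x_i=0]$ and the symplectic classes in $H^1(E_i)\otimes H^1(E_j)$.
\item The symplectic classes equal $\pm([\Delta_{ij}]-[x_i=0]-[x_j=0])$.
\item All of these are classes of divisors deleted in $F(E\smallsetminus 0,n-1)$, so they restrict to zero, giving the vanishing for $k\geq 1$.
\end{enumerate}
With this and the parity argument, Step 1 is complete.

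Your ``cleaner route'' fails, and in exactly the range that matters. You conflate $A^*(\M_{1,n})$ with $R^*(\M_{1,n})=\qq$: the paper shows $A^*(\M_{1,n})$ is infinite-dimensional for every $n\geq 11$, so it is not trivial in positive degree. Nor can $\M_{1,n}$ have the CKgP for $n\geq 11$, whether via a stratification into tractable pieces or otherwise. By Lemma \ref{open-ckgp}, the CKgP would force $W_{11}H^{11}(\M_{1,n})=0$. But $H^{11}(\Mb_{1,n})\neq 0$ by Theorem \ref{clp11}, and it injects into $W_{11}H^{11}(\M_{1,n})$. Indeed, the kernel of restriction is the Gysin image of $H^9$ of the normalized boundary, which vanishes by Theorem \ref{bfp}. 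So the weight argument on the interior, including the invariant-theory step above, is the one you actually have to carry out.
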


\subsection{Tautological classes in genus 2 through 9} \label{29}
The filled and open circles in genus $2$ through $9$ were established  \cite{ckgp} with the following exceptions (some of the open circles referenced below were established earlier, and are then covered in the chart with a filled circle from \cite{ckgp}):
\begin{itemize}
\item $g = 2$ and $n = 0$ by Mumford \cite{M}, $g = 2$ and $n = 1$ by Faber \cite{FaberThesis}
\item $g = 3$ and $n = 0$ by Faber \cite{F2}
\item[${\boldmath{\circ}}$] $g = 4$ and $n = 0$ by Faber \cite{F3}
\item [${\boldmath{\circ}}$] $g = 5$ and $n = 0$ by Izadi \cite{Iz}
\item[${\boldmath{\circ}}$] $g = 5$ and $n = 8, 9$ by Liu \cite{Yuhan}
\item [${\boldmath{\circ}}$] $g = 6$ and $n = 0$ by Penev--Vakil \cite{PV}
\item[${\boldmath{\circ}}$] $g = 7$ and $1 \leq n \leq 3$ by  Canning--Larson--Payne \cite{ste}
\item[${\boldmath{\circ}}$] $g = 7, 8, 9$ and $n=0$ by Canning--Larson \cite{789}
\end{itemize}

Each of these works involves explicitly constructing stratifications of $\M_{g,n}$ and then determining the Chow rings of each stratum by using convenient models of the pointed curve. For example, in genus $3$, we stratify $\M_{3,n}$ into the hyperelliptic and non-hyperelliptic curves. The non-hyperelliptic curves are smooth plane quartics. It turns out that if $n \leq 11$, then $n$ points lying on a smooth plane quartic always impose independent conditions on smooth plane quartics. In this way, the non-hyperelliptic locus of $\M_{3,n}$ is realized as an open substack of a vector bundle over an open substack of the moduli space of $n$ distinct points on $\pp^2$. This sort of description furnishes generators for the Chow ring of the stratum. Finally, one checks that these generators are tautological.

The case $g = 5$ and $n = 8,9$ is particularly interesting. As above, one first separates out the curves of gonality $2$ and $3$.
Then, the tetragonal curves can be modeled as the complete intersection of three quadrics in $\pp^3$. If $n \leq 7$, then in \cite{ckgp}, it was shown that any $n$ points on a canonical genus $5$ curve impose independent conditions on quadrics in $\pp^3$. However, this breaks down when $n \geq 8$. The innovation of Liu in \cite{Yuhan} is to prove that when $n = 8,9$ the points fail to impose independent conditions on quadrics if and only if $8$ of them lie in a hyperplane. Liu then stratifies the tetragonal locus according to whether or not the $n$ points impose independent conditions on quadrics, realizing each piece as an open substack of a vector bundle over a better understood moduli space of points.

Once the Chow ring of $\M_{g,n}$ is shown to be tautological, one can try to assemble the pieces of the boundary using Lemma \ref{assemble}. However, it is not necessary that $A^*(\M_{g',n'}) = R^*(\M_{g',n'})$ for all $2g' + n' \leq 2g + n$.  
Indeed, one interesting example is that, although $A^*(\Mb_{1,11})$ is infinite-dimensional, we have
 $A^*(\Mb_{2,9}) = R^*(\Mb_{2,9})$. This is proved in \cite{ckgp} by modeling pointed genus $2$ curves on $\pp^1 \times \pp^1$ and allowing for the possibility that the curves are simply nodal. In this way, we gain a handle on the Chow ring of $\Mb_{2,9}$ minus the boundary divisors with a separating node, and show it is generated by tautological classes. Then, the boundary divisors with a separating node are the images of 
gluing maps whose sources are products of smaller moduli spaces with the CKgP whose Chow rings are known to be tautological.

\subsection{Non-tautological classes} \label{non-taut}
Outside of genus $1$, the main known source of non-tautological classes are loci of double covers. Let
\[\B_{g,2m} = \{C \in \M_g : \exists \ \pi: C \to E \text{ for $E$ a genus $1$ curve and $\pi(p_{2i}) = \pi(p_{2i-1})$}\]
be the locus of pointed bielliptic curves where the marked points are constrained to be conjugate under the degree $2$ cover. Write $\overline{\B}_{g,2m}$ for the closure of $\B_{g,2m}$ in $\Mb_{g,2m}$.

\begin{thm}[van Zelm \cite{VZ}] \label{vzx}
For $g + m \geq 12$, we have $[\overline{\B}_{g,2m}] \notin R^*(\Mb_{g,2m})$. Its image under the cycle class map is also non-tautological.
\end{thm}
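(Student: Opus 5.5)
The strategy is the one of Graber--Pandharipande for the bielliptic locus in $\Mb_{2,20}$: pull back $[\overline{\B}_{g,2m}]$ along a gluing map to a product, and show the result has a non-tautological Künneth component. It suffices to prove the cohomological statement, since if $[\overline{\B}_{g,2m}]$ were in $R^*$, its cycle class would be in $RH^*$.

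\emph{Step 1: choose the gluing map.} Take a boundary gluing map
\[\xi\colon \Mb_{1,a}\times \Mb_{g-1,b}\to \Mb_{g,2m},\]
chosen so that the first factor can be arranged to contain $\Mb_{1,11}$ (or a space with nonzero $H^{11}$). Tautological classes are closed under pullback along gluing maps. Moreover, the Künneth components of tautological classes on a product lie in $RH^*\otimes RH^*$, which follows from the Graber--Pandharipande result that pullbacks of decorated strata are sums of products of decorated strata. Theorem~\ref{g1} gives that even cohomology of $\Mb_{1,n}$ is tautological, but $RH^*$ has no odd part. So it is enough to show that $\xi^*[\overline{\B}_{g,2m}]$ has a nonzero component in $H^{11}(\Mb_{1,11})\otimes H^{odd}(\cdots)$, or more generally a component not lying in $RH^*\otimes RH^*$.

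\emph{Step 2: compute the pullback.} Describe $\xi^{-1}(\overline{\B}_{g,2m})$ via admissible double covers. The relevant components are those where the genus-$1$ piece (after a further degeneration) is a double cover of a genus-$0$ curve branched appropriately, glued to a bielliptic piece. Concretely, one reduces to the base case $g+m=12$ through a chain of degenerations. The component of interest is of the form
\[\overline{\B}_{1,\ast}\times\overline{\B}_{\ast}\quad\text{or}\quad \text{(diagonal-type loci in } \Mb_{1,11}\times\Mb_{1,11}).\]
The count $g+m\ge 12$ is exactly what guarantees that $11$ points on the genus-$1$ components, namely the paired markings and branch/node points, are available. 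Graber--Pandharipande's trick is that the locus of pairs $\{(E,p_1,\dots,p_{11}),(E,p_1',\dots)\}$ with isomorphic underlying pointed elliptic curves contains the diagonal $\Delta\subset \Mb_{1,11}\times\Mb_{1,11}$. The Künneth decomposition of $[\Delta]$ involves $\sum \alpha_i\otimes\alpha_i^\vee$, including the odd classes in $H^{11}\otimes H^{11}$, which are nonzero.

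\emph{Step 3: isolate the odd part.} Show that after pushing forward or forgetting extraneous markings, which commutes with tautological-ness, the class $\xi^*[\overline{\B}_{g,2m}]$ maps to a positive multiple of $[\Delta_{\Mb_{1,11}}]$ plus tautological terms. Excess-intersection contributions and other components must be tracked and shown to be of product tautological type, via Theorem~\ref{g1} and the known tautological-ness in low genus and marking. Then the $H^{11}\otimes H^{11}$ component is nonzero, contradicting tautological-ness.

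The main obstacle is the induction from the base cases to all $g+m\ge12$. This means choosing, for each $(g,m)$, a degeneration whose boundary restriction recovers a smaller bielliptic locus times the diagonal-type locus, and verifying that no other components of $\xi^{-1}(\overline{\B}_{g,2m})$ cancel the odd Künneth term. I would first handle $g=2$, $m=10$ and the genus-$g$, $m$ small cases directly, then run the induction by gluing a genus-$1$ tail or pair of conjugate points.
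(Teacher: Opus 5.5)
\textbf{There is a genuine gap.} Your general mechanism is the right one: pull back along a gluing map, find a multiple of the diagonal $\Delta\subset\Mb_{1,11}\times\Mb_{1,11}$ (whose $H^{11}\otimes H^{11}$ K\"unneth component is nonzero), and use \cite[Appendix A]{GP} together with Theorem~\ref{g1} to see that everything else lies in $RH^*\otimes RH^*$. But the gluing map you fix in Step~1 cannot produce that diagonal unless $g=2$.

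\emph{Why Step~1 fails.} A point of $\Mb_{1,a}\times\Mb_{g-1,b}$ is a curve $E\cup_q C'$ with a single separating node and $g(C')=g-1$. For $g\ge 3$, any bielliptic involution must preserve both components, since components of different genus cannot be swapped. So over $\M_{1,a}\times\M_{g-1,b}$ there is no swap of two isomorphic genus-one curves and no diagonal. Worse, $\Mb_{1,a}$ carries at most $2m$ markings plus the node. For $m\le 4$ (including $\Mb_{12}$) this forces $a\le 9$, so all of $H^*(\Mb_{1,a})$ is tautological. Every K\"unneth component then lies in $RH^*(\Mb_{1,a})\otimes H^*(\Mb_{g-1,b})$, and you are reduced to finding non-tautological classes on $\Mb_{g-1,b}$, which is a problem of the same kind. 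Your Step~2 silently switches to $\Mb_{1,11}\times\Mb_{1,11}$, which matches Step~1 only for $g=2$.

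\emph{The missing idea} is the choice of map in the paper's sketch. For $g+m=12$, take $\xi\colon\Mb_{1,11}\times\Mb_{1,11}\to\Mb_{g,2m}$ gluing $g-1$ pairs of points \emph{between} the two genus-one components. Put $p_1,p_3,\ldots,p_{2m-1}$ on the first factor and $p_2,p_4,\ldots,p_{2m}$ on the second.
\begin{itemize}
\item The identity $11=(g-1)+m$ is exactly where the bound $g+m=12$ comes from.
\item The swap of two identical copies is a limit of bielliptic involutions: it comes from an admissible cover with a rational bridge over each node. Hence $\xi(\Delta)\subset\overline{\B}_{g,2m}$, with the conjugate pairs as required.
\item Since $\dim\Delta=11=\dim\overline{\B}_{g,2m}-(g-1)$, the diagonal $\Delta$ is a component of the expected dimension.
\item The remaining contributions come from the boundary of $\Mb_{1,11}\times\Mb_{1,11}$. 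That boundary is built from $\Mb_{1,k}$ with $k\le 10$ and genus-zero spaces, so these contributions have tautological K\"unneth components.
\end{itemize}

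\emph{The case $g+m>12$.} Your passage to $g+m>12$ is left as an unresolved ``main obstacle'', so as written the proposal establishes only $(g,m)=(2,10)$. The paper's sketch also treats only $g+m=12$ explicitly. Its forgetful-map remark propagates $A^*\neq R^*$, not the non-tautologicality of the specific class $[\overline{\B}_{g,2m}]$, so this step needs a genuine argument.

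One tempting fix is to replace $\Mb_{1,11}$ by $\Mb_{1,g+m-1}$, where $H^{11}\neq 0$ by Theorem~\ref{clp11}. This does not go through automatically. The boundary strata then involve $\Mb_{1,k}$ with $k\ge 11$, so the boundary terms are no longer automatically tautological. You would have to show they do not cancel the odd K\"unneth component of the diagonal.
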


In the case $g +m = 12$, the method for showing these cycles are non-tautological involves pulling back along appropriate gluing maps $\Mb_{1,11} \times \Mb_{1,11} \to \Mb_{g,2m}$. The pullback of $\overline{\B}_{g,2m}$ is expressible as a multiple of the diagonal plus some contribution from the boundary. The fact that $H^{11}(\Mb_{1,11}) \neq 0$ implies that the diagonal inside $\Mb_{1,11} \times \Mb_{1,11}$
has non-tautological K\"unneth components. However, the pullbacks of tautological classes always have tautological K\"unneth comonents by \cite[Appendix A]{GP}. 
Because pushforward along forgetful maps is surjective on Chow groups and sends tautological classes to tautological classes, if $A^*(\Mb_{g,n}) \neq R^*(\Mb_{g,n})$, then $A^*(\Mb_{g,n+1}) \neq R^*(\Mb_{g,n+1})$. Thus, we can draw small red exes in the region $2g + n \geq 24$.

Additional care is needed to show that these bielliptic cycles yield non-tautological classes on the interior $\M_{g,n}$, represented by large red exes.
These are a consequence of the following theorem.
\begin{thm}[see \cite{GP,VZ,cc,FT}] \label{mainx}
There exists a non-tautological class in $A^*(\M_{g,n})$ whose image in $H^*(\M_{g,n})$ is also non-tautological for $g$ and $n$ satisfying either of the following conditions
\begin{enumerate}
\item[{\color{red} \large $\times$}] $2 \leq g \leq 12$ and $2g + n \geq 24$
\item[{\color{red} \large $\times$}] $g \geq 13$ and $2g + n \geq 32$.
\end{enumerate}
\end{thm}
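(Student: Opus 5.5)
The plan is to produce, for each $(g,n)$ in the two ranges, an explicit cycle on $\M_{g,n}$ and detect its non-tautologicality in cohomology by pulling back along a gluing map that involves $\Mb_{1,11}$. Once the cohomology class is shown to be non-tautological, the Chow class is automatically non-tautological, since the cycle class map carries $R^*$ onto $RH^*$.

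The detection tool is the one used for Theorem \ref{vzx}. Suppose $\alpha \in H^*(\Mb_{g,n})$ restricts to a class on $\M_{g,n}$ that equals a tautological class $\tau|_{\M_{g,n}}$. By excision, $\alpha - \tau$ is then pushed forward from the normalization of the boundary, $\widetilde{\partial \M_{g,n}}$. Let $\xi\colon \Mb_{1,11}\times\Mb_{1,11}\to\Mb_{g,n}$, or a variant with a further factor, be a gluing map. Pulling back $\alpha - \tau$ along $\xi$ gives two kinds of terms:
\begin{itemize}
\item pullbacks of tautological classes, whose K\"unneth components are tautological by \cite[Appendix A]{GP};
\item pullbacks of classes pushed forward from boundary strata, computed by excess intersection along the strata meeting the image of $\xi$.
\end{itemize}
The goal is to show that every such term has K\"unneth components that do not involve $H^{11}(\Mb_{1,11})\otimes H^{11}(\Mb_{1,11})$. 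It then suffices to check that $\xi^*\alpha$ does have a nonzero component there, for instance a multiple of the diagonal of $\Mb_{1,11}$.

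The cycles I would use are the following.
\begin{itemize}
\item For $2\le g\le 12$ and $2g+n\ge 24$: the pointed bielliptic cycles $\overline{\B}_{g,2m}$ with $g+m=12$, together with variants that carry additional free markings or markings fixed by the involution. These variants let me cover odd $n$ and larger $n$, and push-pull along forgetful maps extends the result upward in $n$.
\item For $g\ge 13$ and $2g+n\ge 32$: loci of curves admitting a double cover of a genus-1 curve with prescribed branching, or cycles built from bielliptic loci of smaller genus glued to an extra genus-$(g-12)$ component. These are chosen so that the relevant gluing map still sees two copies of $\Mb_{1,11}$ interacting through a diagonal.
\end{itemize}

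The key bookkeeping step, which I expect to be the main obstacle, is controlling the boundary contributions. If the cycle is arbitrary, classes supported on $\partial\M_{g,n}$ can pull back to classes containing the diagonal of $\Mb_{1,11}$, and the argument fails. I would handle this by an induction on strata. Each boundary stratum meeting $\xi(\Mb_{1,11}\times\Mb_{1,11})$ is the image of a product of smaller moduli spaces. So I want cohomology of those smaller spaces in the relevant degrees to be tautological, or at least to have no $H^{11}\otimes H^{11}$ piece:
\begin{itemize}
\item via Theorem \ref{g1} for genus $1$ even cohomology;
\item via Lemma \ref{assemble} together with Corollary \ref{rhcor} for the low genus pieces covered in Section \ref{29}.
\end{itemize}
The numerical bounds $2g+n\ge 24$ and $2g+n\ge 32$ should be exactly what makes enough markings available to place two copies of $\Mb_{1,11}$ while keeping the complementary vertices in a range where this vanishing is known. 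I would first attempt the case $g+m=12$ directly, then extend in $n$ using forgetful and section maps, and in $g$ by gluing on elliptic tails or extra handles. At each step I would verify that the $H^{11}\otimes H^{11}$ component persists on the interior.
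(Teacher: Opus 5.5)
For $2\le g\le 12$ your outline (the $g+m=12$ bielliptic cycles, detected through the $H^{11}\otimes H^{11}$ Künneth component on $\Mb_{1,11}\times\Mb_{1,11}$, then pulled back along forgetful maps to increase $n$) is the cited approach. For $g\ge 13$, however, the proposal has a genuine gap. Your suggestions of bielliptic loci of smaller genus glued to an extra component, and of ``gluing on elliptic tails or extra handles'', produce cycles supported on $\partial\M_{g,n}$. These restrict to zero on $\M_{g,n}$. The two-vertex picture is also unavailable there: gluing $E_1,E_2\in\Mb_{1,11}$ along $k$ of their special points gives genus $k+1\le 12$. So a genus-$1$ double-cover locus in genus $g\ge 13$ splits as $E_1\cup E_2$ only with $E_i\in\Mb_{1,r}$, $r\ge g-1\ge 12$. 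If instead extra vertices carry the genus, the diagonal term acquires excess classes, and you give no argument that it can still be isolated. The paper attributes this range to loci of double covers of curves of genus $>1$ \cite{cc} and refinements \cite{FT}, detected by higher-weight non-tautological classes rather than $H^{11}(\Mb_{1,11})$. In particular, $32$ is not ``enough markings to fit two copies of $\Mb_{1,11}$''. If the target of a double cover of a genus-$1$ curve carries $r$ special points, the split degeneration has $2g+n=2r+2$. Thus $r=11$ gives $24$, and $32$ corresponds to $r=15$, where $H^{15}(\Mb_{1,15})$ contains $\mathsf{S}_{16}$; this matches the paper's description of the $g=2,3$ cases as $4\mid 2g+n\ge 32$. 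In that setting lower odd cohomology of $\Mb_{1,15}$ does not vanish, so boundary terms have to be separated from the diagonal by Hodge or Galois type.

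Even for $g\le 12$, the boundary bookkeeping you propose is not available. The strata meeting $\xi(\Mb_{1,11}\times\Mb_{1,11})$ include the irreducible divisor, with normalization $\Mb_{g-1,2m+2}$ (for example $\Mb_{3,18}$ when $g=4$, or $\Mb_{11,2}$ when $g=12$). These are far outside the reach of Theorem \ref{g1}, Lemma \ref{assemble} and Corollary \ref{rhcor}, and in fact carry non-tautological classes. What actually controls the boundary is a degree count special to $g+m=12$: $[\overline{\B}_{g,2m}]$ has codimension $g-1+m=11=\dim\Mb_{1,11}$. Suppose $\alpha-\tau=\iota_*\beta$ with $\beta$ of degree $20$ on $\widetilde{\partial\M_{g,n}}$. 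Excess intersection writes $\xi^*\iota_*\beta$ as a sum of two kinds of terms. The first kind is pushed forward from $\partial(\Mb_{1,11}\times\Mb_{1,11})$; its $H^{11}\otimes H^{11}$ components would have to come from odd classes of degree at most $9$ on boundary strata. The second kind is (pullback of $\beta$)$\cdot(-\psi_h-\psi_{h'})$; its $H^{11}\otimes H^{11}$ components would need a factor in $H^9(\Mb_{1,11})$. Both vanish by Theorems \ref{ac2} and \ref{bfp}, so no knowledge of the cohomology of the large strata is needed. The same count works for $\pi^*[\B_{g,2m}]$ on $\M_{g,2m+j}$, using $\Mb_{1,11+j_1}\times\Mb_{1,11+j_2}$: forgetful pullback is injective on $H^{11}$, and $H^9(\Mb_{1,11+j_i})=0$. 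This is what actually makes your extension to all $n\ge 24-2g$ work.
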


In each of these cases, it is shown that the fundamental class of a locus of double covers of curves is non-tautological.
The first case, treated in \cite{GP} was $(g, n) = (2,20)$. 
The cases with $2 \leq g \leq 12$ and $2g + n = 24$ were established by van Zelm \cite{VZ}. 
In these cases, the non-tautological cycle on $\M_{g,n}$ is $\B_{g,n}$.
In \cite{cc}, these ideas were generalized to double covers of curves of genus $> 1$, establishing the cases with $g \geq 4$ and $n$ is even and $2g + n \geq 32$, as well as $g = 2, 3$ and $n$ is even and $4 \mid 2g + n \geq 32$. The remaining cases, especially with $n$ odd, are treated by improving upon these ideas in \cite{FT}.

\smallskip
Finally, the small red exes, seen in the chart with $g < 12$, are cases where $H^k(\Mb_{g,n}) \neq 0$ for some odd $k$, which by Corollary \ref{rhcor}, implies that $A^*(\Mb_{g,n}) \neq R^*(\Mb_{g,n}$).
These odd cohomology classes come from pushing forward classes in $H^{11}(\Mb_{1,11})$ along gluing maps. One can verify that these push forwards are non-zero by pulling them back along the same gluing map and using the push-pull formula.
\begin{thm}[Canning--L.--Payne \cite{h11}] \label{odd1}
Assume $k \leq 11$. Let $g_1,\ldots, g_k$ be distinct positive integers, and set $g = 1 + g_1 + \ldots + g_k$. Then
\[H^{11+2k}(\Mb_{g,n}) \neq 0 \text{ for $n \geq 11 - k$}.\]
\end{thm}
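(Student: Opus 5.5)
The plan is to produce an explicit nonzero class by pushing forward along a gluing map, as indicated just before the statement. Take the stable graph $\Gamma$ with a central vertex $v_0$ of genus $1$ joined by single edges to $k$ outer vertices $v_1,\ldots,v_k$ of genera $g_1,\ldots,g_k$. Put all $n$ legs on $v_0$, so $v_0$ carries $n+k \geq 11$ half-edges. The total genus is $1+g_1+\cdots+g_k = g$, and the graph is a tree. Consider
\[\xi_\Gamma \colon \Mb_{1,n+k} \times \prod_{i=1}^k \Mb_{g_i,1} \to \Mb_{g,n}.\]
Let $\omega \in H^{11}(\Mb_{1,n+k})$ be the pullback of a nonzero class of $H^{11}(\Mb_{1,11})$ along a forgetful map; it is nonzero since forgetful pullbacks are injective. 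On each $\Mb_{g_i,1}$ take the class of a point $[\mathrm{pt}_i] \in H^{2(3g_i-2)}$. The candidate is
\[\alpha = \xi_{\Gamma*}\bigl(\omega \otimes [\mathrm{pt}_1]\otimes\cdots\otimes[\mathrm{pt}_k]\bigr).\]
Its degree is $11 + \sum_i 2(3g_i-2) + 2k$, since $\xi_\Gamma$ has codimension $k$. This is not $11+2k$, so the point classes are the wrong decoration. The degree count forces the classes on the outer vertices to have total degree $0$, so I would use the fundamental classes $1$ there instead. Then $\alpha=\xi_{\Gamma*}(\omega\otimes 1\otimes\cdots\otimes 1)\in H^{11+2k}(\Mb_{g,n})$, which matches the statement.

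To show $\alpha\neq 0$, pull it back along $\xi_\Gamma$ and pair the result against a suitable class in the source. I would pair $\xi_\Gamma^*\alpha$ with $\bar\omega \otimes [\mathrm{pt}]^{\otimes k}$, where $\bar\omega\in H^{11}(\Mb_{1,n+k})$ satisfies $\int \omega\bar\omega \neq 0$ after multiplying by point classes of the complementary forgotten factors. The pullback $\xi_\Gamma^*\xi_{\Gamma*}$ is given by the excess intersection formula of \cite[Appendix A]{GP}. It is a sum over generic $(\Gamma,\Gamma)$-structures, that is, over graphs $\Gamma'$ with two edge-contractions to $\Gamma$. The term with $\Gamma'=\Gamma$ and the identity contributes $\sum_{\sigma\in\Aut\Gamma}\sigma^*(\omega\otimes1)\cdot\prod_{e}(-\psi_e-\psi_e')$.

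The genera $g_i$ are distinct and positive, so $\Aut\Gamma$ fixes every vertex and is trivial. This is where the distinctness hypothesis enters. The diagonal term is then $(\omega\otimes 1)\cdot\prod_{i=1}^k(-\psi_{h_i}-\psi_{h_i'})$. Pairing with $\bar\omega\otimes[\mathrm{pt}]^{\otimes k}$ kills every $\psi_{h_i'}$ on the outer factors for degree reasons. What survives is $\pm\int_{\Mb_{1,n+k}}\omega\bar\omega\,\psi_{h_1}\cdots\psi_{h_k}$. I would arrange this to be nonzero by choosing $\bar\omega$ as the pullback of the dual class from $\Mb_{1,11}$ times an appropriate class, using $n+k\geq 11$ and $k\leq 11$. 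The point is that $\psi$ monomials on the fibres of $\Mb_{1,n+k}\to\Mb_{1,11}$ integrate nontrivially, for instance via the string and dilaton equations.

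The main obstacle is the remaining excess terms. These come from graphs $\Gamma'$ where the two copies of $\Gamma$ overlap nontrivially, so that some edges are shared and others are new. Each such term restricts $\omega$ to a boundary stratum of $\Mb_{1,n+k}$. The classes obtained there lie in $H^{11}$ of products of genus-$0$ spaces with $\Mb_{1,m}$, $m<11$, or in $H^{11}$ of an outer factor $\Mb_{g_i,*}$. I expect to show that these vanish or pair to zero. The first kind vanishes because $H^{11}(\Mb_{1,m})=0$ for $m\le 10$ and genus-$0$ spaces have only even, tautological cohomology. For the second kind I would use that the point classes on the outer factors annihilate every odd class. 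This bookkeeping is the delicate step, and it is also where $k\le 11$ should be needed to keep the relevant degrees in range.
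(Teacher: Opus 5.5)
Your construction is the right one, and it is the one the paper indicates: push $\omega\otimes1\otimes\cdots\otimes1$ forward along the star graph with a genus-$1$ center, then detect $\alpha$ through $\xi_\Gamma^*\alpha$ and the excess intersection formula. The gap is your test class, which cannot detect $\alpha$.

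Pairing with $\bar\omega\otimes[\mathrm{pt}]^{\otimes k}$ kills every monomial containing some $\psi_{h_i'}$. So the only surviving part of the diagonal term is $\omega\,\psi_{h_1}\cdots\psi_{h_k}$ on the central factor, and this class vanishes whenever $n\le 10$. Indeed, $\omega$ is pulled back from $H^{11}(\Mb_{1,11})\otimes\cc=H^{11,0}\oplus H^{0,11}$ and each $\psi$ has type $(1,1)$. The product therefore has type $(11+k,k)+(k,11+k)$, which is zero on a stack of dimension $n+k<11+k$. For $n\le 5$ your $\bar\omega$ would even need negative degree $2n-11$. No string or dilaton argument can rescue this, since the fibres of $\Mb_{1,n+k}\to\Mb_{1,11}$ have dimension $n+k-11<k$. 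So your argument gives nothing in the range $11-k\le n\le 10$. That range cannot be recovered from larger $n$, because injectivity of forgetful pullbacks only carries nonvanishing from $n$ to $n+1$.

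The fix is to keep the opposite extreme of the expansion of $\prod_i(-\psi_{h_i}-\psi_{h_i'})$.
\begin{itemize}
\item First reduce to $n=11-k$ (this is where $k\le 11$ enters), so that the central factor is $\Mb_{1,11}$ itself.
\item Then look at the K\"unneth component of $\xi_\Gamma^*\alpha$ in $H^{11}(\Mb_{1,11})\otimes\bigotimes_i H^2(\Mb_{g_i,1})$. The diagonal term contributes $(-1)^k\,\omega\otimes\psi_1\otimes\cdots\otimes\psi_1$. This is nonzero because $\int_{\Mb_{g_i,1}}\psi_1^{3g_i-2}=1/(24^{g_i}g_i!)\neq 0$ for $g_i\ge 1$.
\item Equivalently, pair with $\bar\omega\otimes\psi_1^{3g_1-3}\otimes\cdots\otimes\psi_1^{3g_k-3}$ rather than with point classes.
\end{itemize}

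The reduction also repairs your excess-term bookkeeping, which is incorrect as stated when $n+k>11$. In that case, boundary strata of $\Mb_{1,n+k}$ have genus-$1$ factors $\Mb_{1,m}$ with $m\ge 11$, on which the pulled-back $\omega$ need not restrict to zero. On $\Mb_{1,11}$, by contrast, the genus-$1$ vertex of any proper boundary stratum has at most $10$ half-edges. So every term that splits the center vanishes. Any remaining non-identity term leaves only an even-degree class on the center, so it cannot affect the K\"unneth component above.

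Finally, distinctness of the $g_i$ does more than make $\Aut\Gamma$ trivial. The group $\mathbb{S}_{11}$ acts on $H^{11}(\Mb_{1,11})$ by the sign character. So for $n=11-k$, if $g_i=g_j$, the automorphism exchanging the two edges negates $\omega$, and $\alpha$ itself vanishes.
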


The above result implies $H^{33}(\Mb_{g}) \neq 0$ for $g \geq 67$.
We have also used similar ideas to construct odd cohomology classes starting with known non-zero classes in $H^{17}(\Mb_{2,14})$. This produces odd cohomology classes in genus $16$ with no markings.

\begin{thm}[Canning--L.--Payne--Willwacher \cite{FAmodules}] \label{odd2}
For $g \geq 16$ and $n \geq 0$, we have $H^{45+2(g - 16)}(\Mb_{g,n}) \neq 0$.
\end{thm}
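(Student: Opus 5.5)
The plan is to mimic the construction behind Theorem \ref{odd1}, replacing $\Mb_{1,11}$ and $H^{11}$ by $\Mb_{2,14}$ and $H^{17}$. The input is the known nonvanishing $H^{17}(\Mb_{2,14}) \neq 0$. We push these classes forward along gluing maps into higher genus. The target degree is $45 + 2(g-16) = 17 + 2(g-2)$, so every additional unit of genus must raise the cohomological degree by exactly $2$.

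For the base case $g = 16$, $n = 0$, we need a stable graph $\Gamma$ with a genus $2$ vertex carrying $14$ half-edges, all connected to other vertices. A natural choice is to attach $14$ genus $1$ tails. This gives genus $2 + 14 = 16$ and $14$ edges, so $\xi_\Gamma$ has complex codimension $14$. Pushing forward $\alpha \otimes 1 \otimes \cdots \otimes 1$ along $\xi_\Gamma$ then lands in degree $17 + 28 = 45$, which is the required degree.

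To extend to $g > 16$, we can instead attach genus $1$ tails carrying extra genus elsewhere, for instance using genus $g_i$ vertices. A cleaner route is to increase $g$ by adding self-loops: each self-loop (a $\Mb_{g-1,n+2} \to \Mb_{g,n}$ gluing) adds one to the genus and, being one edge, adds $2$ to the degree. Markings $n$ can be placed on the tails. These steps give classes of the correct degree $45 + 2(g-16)$ on $\Mb_{g,n}$ for all $g \geq 16$, $n \geq 0$.

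The main obstacle is showing that the pushforward $\xi_{\Gamma*}(\alpha \otimes 1)$ is nonzero. Following the method for Theorem \ref{odd1}, we pull back along $\xi_\Gamma$ itself and use the excess intersection formula of \cite[Appendix A]{GP}. This expresses $\xi_\Gamma^*\xi_{\Gamma*}(\alpha\otimes1)$ as a sum over generic $\Gamma$-structures on $\Gamma$-graphs, i.e. over $\Aut(\Gamma)$ terms together with terms from degenerations.

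We then project to the Künneth component $H^{17}(\Mb_{2,14}) \otimes \bigotimes H^*(\text{tails})$ and argue that every contribution except the identity-type ones vanishes. Other terms either factor through the boundary of $\Mb_{2,14}$ or involve pulling back $\alpha$. Here we use that $\alpha$ can be chosen to restrict to zero on the boundary, since $H^{17}$ of all boundary strata of $\Mb_{2,14}$ vanishes for small $(g',n')$. The remaining terms are $\alpha$ multiplied by top Chern classes of normal bundles, namely products of $-\psi-\psi'$ on the tails. These are nonzero on the tail factors, for instance by pairing with suitable tautological classes on $\Mb_{1,1}$ or $\Mb_{g_i,1}$.

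The automorphisms permuting the $14$ tails act on $\alpha$ through the $\mathbb{S}_{14}$-representation $H^{17}(\Mb_{2,14})$. Cancellation must therefore be ruled out by checking that $\alpha$ has a nonzero symmetrization, i.e. that it is not killed by averaging. This $\mathbb{S}_{14}$ symmetrization is the step I expect to need the known structure of $H^{17}(\Mb_{2,14})$, and it may require choosing tails of distinct genera, as in Theorem \ref{odd1}, to kill the automorphisms.
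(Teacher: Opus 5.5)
Your overall strategy is the one the paper has in mind: push a class of $H^{17}(\Mb_{2,14})$ forward along a gluing map, then detect it by pulling back along the same map and projecting to a K\"unneth component. However, the base case breaks exactly where you feared, and your suggested remedy cannot reach genus $16$.

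\textbf{The base case gives zero.} Pushforward along your graph with fourteen genus-one tails factors through the $\mathbb{S}_{14}$-coinvariants of $H^{17}(\Mb_{2,14})$. The classes your boundary-vanishing step needs are those of type $\mathsf{S}_{18}$, spanned by $H^{17,0}(\Mb_{2,14})$ and its conjugate. They come from the Siegel cusp form $\chi_{10}$ (a Saito--Kurokawa lift) via the local system $\mathbb{V}_{7,7}$ inside $H^1(C)^{\otimes 14}$. Since $\mathbb{S}_{14}$ acts on $H^1(C)^{\otimes 14}$ with Koszul signs, Schur--Weyl pairs $\mathbb{V}_{7,7}$ with the transposed partition, so $H^{17,0}(\Mb_{2,14}) \cong V_{2^7}$. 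This is the same sign twist that makes $H^{11}(\Mb_{1,11})$ the sign representation. $V_{2^7}$ has no $\mathbb{S}_{14}$-invariants, so your class is zero.

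Tails of distinct genera would remove the symmetry, but they force $g \geq 2 + (1 + \cdots + 14) = 107$. At $g = 16$ with $14$ edges at the genus-two vertex, any tree is your $14$-tail graph, so the graph must contain cycles.

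\textbf{The boundary-vanishing justification is wrong.} You claim $H^{17}$ of all boundary strata of $\Mb_{2,14}$ vanishes. It does not: for example, $H^{11}(\Mb_{1,11}) \otimes H^{6}(\Mb_{1,5})$ lives on a separating stratum. The correct reason is Hodge-theoretic. Boundary strata have dimension at most $16$, so they carry no classes of type $(17,0)$, and $\alpha$ of that type restricts to zero.

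\textbf{The missing idea.} Choose $\Gamma$ so that $\Aut(\Gamma)$ acts on the $14$ legs through a subgroup fixing a vector of $V_{2^7}$. The hyperoctahedral group $\mathbb{S}_2 \wr \mathbb{S}_7$ works, since $\mathrm{Ind}_{\mathbb{S}_2 \wr \mathbb{S}_7}^{\mathbb{S}_{14}} \mathbf{1} = \bigoplus_{\lambda \vdash 7} V_{2\lambda}$ and $(2^7) = 2 \cdot (1^7)$. It is the automorphism group of the graph with a genus-two vertex joined by two edges to each of seven genus-one vertices. That graph has $14$ edges, genus $2 + 7 + 7 = 16$, and gives degree $17 + 28 = 45$.

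For an invariant $\alpha \in H^{17,0}(\Mb_{2,14})$, the argument then runs as follows. Terms of $\xi_\Gamma^*\xi_{\Gamma*}(\alpha \otimes 1)$ in which the genus-two vertex degenerates vanish. A genus and leg count shows the remaining terms are exactly the automorphism terms. The component in $H^{17}(\Mb_{2,14}) \otimes H^4(\Mb_{1,2})^{\otimes 7}$ is then a nonzero multiple of $\alpha \otimes (\psi_1\psi_2)^{\otimes 7}$, because $\int_{\Mb_{1,2}} \psi_1\psi_2 = 1/24$.

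\textbf{The step to $g > 16$.} Your passage to $g > 16$ via self-loops also lacks a nonvanishing argument. For a self-loop, the excess class $-\psi_h - \psi_{h'}$ lies on the same factor as $\alpha$, so the K\"unneth separation you rely on is unavailable. Markings can be handled by pulling back along forgetful maps, which is injective. Extra genus is better added through edges to new vertices, where the pullback-and-project argument can be rerun.
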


\subsection{Interpreting the chart in cohomology}
Considering the references we have given in Section \ref{non-taut}, every red ex in the chart is also an instance where $H^*(\Mb_{g,n}) \neq RH^*(\Mb_{g,n})$.
Moreover, by Corollary \ref{rhcor}, every filled circle in the chart is an instance where $H^*(\Mb_{g,n}) = RH^*(\Mb_{g,n})$. 
There are a few further instances where all cohomology is known to be tautological. For the pairs $(g, n)$ listed in the proposition below, this is a slightly stronger statement than in \cite[Theorem 8]{C}, where it is shown that all even degree cohomology is tautological. We defer the proof to the end of Section \ref{sec:pts} as it weaves together several results explained in the next two sections.
\begin{prop} \label{samir8}
We have $H^*(\Mb_{g,n}) = RH^*(\Mb_{g,n})$ for the following pairs of $(g, n)$:
\[(3, 9) \qquad (4, 7) \qquad (5,5) \qquad (6,3) \qquad (7,1).\]
\end{prop}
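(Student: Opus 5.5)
The pairs listed all satisfy $2g+n = 15$, and in each case the open moduli space $\M_{g,n}$ carries an open circle in the chart. Concretely, Section \ref{29} gives $A^*(\M_{g,n}) = R^*(\M_{g,n})$ together with the CKgP (Remark \ref{rem:ckgp}), with $(7,1)$ coming from \cite{ste}. I would split $H^*(\Mb_{g,n})$ into even and odd degrees. For the even part I would invoke \cite[Theorem 8]{C}, which already shows that all even-degree cohomology of these spaces is tautological. The remaining task is therefore to show that $H^k(\Mb_{g,n}) = 0$ for every odd $k$.

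For the odd vanishing I would use point counts and Theorem \ref{be}. Since $\Mb_{g,n}$ is smooth and proper over $\spec\zz$, it suffices to prove that $\#\Mb_{g,n}(\ff_q)$ is a polynomial in $q$. Theorem \ref{be} then forces all odd cohomology to vanish, and this holds in every degree, not only low ones.

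To get polynomiality I would use the stratification $\#\Mb_{g,n}(\ff_q) = \#\M_{g,n}(\ff_q) + \#\partial\M_{g,n}(\ff_q)$.

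\emph{Interior.} Because $\M_{g,n}$ has the CKgP and tautological Chow ring, Lemma \ref{open-ckgp} makes the lowest-weight parts of its cohomology Tate. That alone does not give a polynomial count. Instead I would return to the stratification proving $A^* = R^*$: each stratum is a quotient of an open piece of a vector or Grassmann bundle over a configuration space of points in projective space, and such pieces have polynomial point counts. Carrying this out $\mathbb{S}_n$-equivariantly gives a polynomial equivariant count for $\#^{\mathbb{S}_n}\M_{g,n}(\ff_q)$. Alternatively, I would take the equivariant counts from Bergstr\"om's computations for low genus.

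\emph{Boundary.} Each stratum is a finite quotient of a product of $\M_{g',n'}$ with $2g'+n' \le 15$ and $g' \le g$. I would feed the equivariant counts of all these spaces into the Getzler--Kapranov formula (Section \ref{equiv}).

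The delicate point, and the step I expect to be the main obstacle, is that the boundary involves spaces such as $\M_{1,n'}$ with $n' \ge 11$ and $\M_{2,n'}$ with $n'$ large, whose counts contain $\tau(p)$ and Siegel-type terms. To handle this I would check that stability forces $n' \le 15 - 2g'$ on every genus-$1$ and genus-$2$ vertex. For genus $1$ the largest case is $n' = 13$, whose $\mathsf{S}_{12}$ contributions are nontrivial, so I would argue that they cancel in the weight spectral sequence.

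If that cancellation proves hard to organize, I would fall back on the weight spectral sequence of Section \ref{wt} applied in reverse. Odd classes in $H^k(\Mb_{g,n})$ with $k \le 11$ vanish by the Hodge-theoretic vanishing used in \cite{h11} for low odd degree, and Poincar\'e duality removes $k \ge 2\dim - 11$. The leftover middle odd degrees can only carry $\mathsf{S}_{12}$-type pieces. I would rule these out by showing that the odd weight Euler characteristics $\chi_k$ from Lemma \ref{chik} vanish using the Bergstr\"om--Faber counts \cite{BF}, and then showing that the pullback of any such class to the boundary is injective. That pullback lands in products that contain $\Mb_{1,11}$ with too few remaining markings to support the class.
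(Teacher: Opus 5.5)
\textbf{There is a genuine gap.} Your reduction to odd vanishing is fine, and so is the use of Theorem \ref{be} once polynomiality of $\#\Mb_{g,n}(\ff_q)$ is known. The problem is that the route you propose to that polynomiality does not work.

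The interior step is false. For $(3,9)$, $(4,7)$ and $(5,5)$ one has $3g+2n\geq 25$, and these positions carry red exes in the chart for $\#\M_{g,n}(\ff_q)$. So no stratification, equivariant or not, can produce a polynomial count there. The stratifications behind $A^*(\M_{g,n})=R^*(\M_{g,n})$ realize strata as \emph{open} pieces of vector or Grassmann bundles. Excision makes the removed discriminant-type loci invisible in Chow, but not in point counts (compare $\pp^2\smallsetminus E$ in Example \ref{proj}).

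Polynomiality of $\#\Mb_{g,n}(\ff_q)$ would therefore have to come from cancellation between non-Tate contributions of the interior and of the boundary. You assert this cancellation but do not organize it. Moreover, the Getzler--Kapranov input it needs is not known in genus $4$ and $5$: equivariant counts of $\M_{5,5}$, $\M_{4,7}$, $\M_{3,9},\dots$, whose finite quotients appear as boundary strata of $\Mb_{7,1}$.

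The fallback also fails, for three reasons. Lemma \ref{ind} needs $d(g,n)>k$, but $d(g,n)\leq 14$ for all five pairs, whereas the odd degrees not already settled by Theorems \ref{ac2}, \ref{bfp}, \ref{clp11}, \ref{clpw13} and Poincar\'e duality are $k\geq 15$. Theorem \ref{lowwt} stops at $k=15$ and already permits $\mathsf{S}_{16}$ there, so nothing confines the middle odd degrees to $\mathsf{S}_{12}$-type pieces. And \cite{BF} concerns only $g\leq 3$.

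The missing idea is to use the open circle directly in cohomology rather than for point counts. In every degree $k$ there is an exact sequence
\[H^{k-2}(\widetilde{\partial \M_{g,n}}) \to H^k(\Mb_{g,n}) \to W_kH^k(\M_{g,n}) \to 0.\]
By Remark \ref{rem:ckgp} and Lemma \ref{open-ckgp}, the right-hand term is tautological, and it is zero for odd $k$. So it suffices that every Gysin pushforward from the boundary is tautological.
\begin{itemize}
\item Separating divisors are images of products of spaces with filled circles; these are handled by K\"unneth and Corollary \ref{rhcor}.
\item The irreducible divisor is $\Mb_{g-1,n+2}$. This is exactly the previous pair on your list, since $2g+n=15$ is preserved.
\end{itemize}
One therefore inducts along $(3,9)\to(4,7)\to(5,5)\to(6,3)\to(7,1)$. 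Only the base case $(3,9)$ uses point counts:
\begin{itemize}
\item \cite[Theorem 8]{C} gives the even degrees;
\item polynomiality of $\#\Mb_{3,9}(\ff_q)$ is taken directly from \cite{BF}, unconditionally because $\dim\Mb_{3,9}=15$, where Theorem \ref{lowwt} confirms Conjecture \ref{lc};
\item Theorem \ref{be} then gives odd vanishing.
\end{itemize}
For $g\geq 4$ this argument treats even and odd degrees at once, so \cite{C} is not needed there.
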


\begin{rem}
Although the references given in Section \ref{29} for $(g,n) = (2,0), (2,1), (3, 0)$ provide a presentation of the ring, in general even when
 $H^*(\Mb_{g,n}) = RH^*(\Mb_{g,n})$, we do not necessarily understand the cohomology ring, since the tautological ring is not known. 
However,
the fact that $H^*(\Mb_{g,n}) = RH^*(\Mb_{g,n})$ together with Poincar\'e duality implies that the pairing in the tautological ring is perfect. Moreover, there are known algorithms to compute the pairing in the tautological ring, which would then determine all relations (a class is zero if and only if it pairs to zero with all classes in complimentary degree). Such calculations are very computationally expensive, but progress in this direction, in particular determining the Betti numbers when $n = 0$ and $g \leq 6$, is the subject of forthcoming work \cite{S+}.
\end{rem}

\begin{rem}
The reader is warned that although open circles imply the \emph{pure weight} cohomology of $\M_{g,n}$ is tautological by Remark \ref{rem:ckgp} and Lemma \ref{open-ckgp}, they do not imply $H^*(\M_{g,n}) = RH^*(\M_{g,n})$. For example $H^*(\M_{0,4}) \neq RH^*(\M_{0,4})$.
\end{rem}

\subsection{The Madsen--Weiss theorem} We end this section with a celebrated result about the cohomology of the interior $\M_{g,n}$ and its tautological ring. Although we rarely expect the entire cohomology ring of $\M_{g,n}$ to be tautological, the Madsen--Weiss theorem says that the cohomology groups in low degrees relative to $g$ are all tautological. The extension to marked points is due to Looijenga. The precise range where the cohomology groups become tautological has improved over the years. The statement we give below can be found in \cite[Theorem 1.1]{Wahl}.
Recall that by excision and our definition of the tautological ring in the introduction, $R^*(\M_{g,n})$ is generated by the psi and kappa classes. Thus, $RH^*(\M_{g,n})$ is generated by their images under the cycle class map.

\begin{thm}[Harer, Madsen--Weiss, Looijenga]
Let $\qq[\kappa_1, \kappa_2, \ldots, \psi_1, \ldots, \psi_n]$ be the free polynomial algebra with generators $\psi_1, \ldots, \psi_n$ in degree $2$ and $\kappa_j$ in degree $2j$. The map
\[ \qq[\kappa_1, \kappa_2, \ldots, \psi_1, \ldots, \psi_n] \to H^*(\M_{g,n})\]
is an isomorphism  in degrees $* \leq \frac{2}{3}g - \frac{2}{3}$.
\end{thm}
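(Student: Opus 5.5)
The plan is to reduce the statement to three inputs: homological stability for mapping class groups, the identification of the stable cohomology (the Mumford conjecture), and a fibration argument that adds marked points. Throughout we use that $\M_{g,n}$ is, rationally, a classifying space for the mapping class group $\Gamma_{g,n}$ of a genus $g$ surface with $n$ marked points. So $H^*(\M_{g,n};\qq)\cong H^*(\Gamma_{g,n};\qq)$, and the question becomes one about group cohomology. It is convenient to also introduce $\Gamma_{g,1}^\partial$, the mapping class group of a surface with one boundary component.

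\textbf{Step 1 (Harer stability).} Consider the maps induced by gluing on a handle, $\Gamma^\partial_{g,1}\to\Gamma^\partial_{g+1,1}$, and by capping off the boundary, $\Gamma^\partial_{g,1}\to\Gamma_g$. We show these induce isomorphisms on rational (co)homology in degrees $*\le\frac23 g-\frac23$. The method is Harer's: let $\Gamma$ act on a highly connected simplicial complex of arcs or curves, run the spectral sequence of this action, and argue by induction on $g$. The sharp slope $\frac23$ comes from the refinements of Ivanov, Boldsen and Randal-Williams, and we would quote the form given in \cite[Theorem 1.1]{Wahl}.

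\textbf{Step 2 (stable cohomology).} We compute $\varinjlim_g H^*(\Gamma^\partial_{g,1};\qq)$. By group completion, $\zz\times B\Gamma_\infty^+$ is identified with $\Omega^\infty$ of the cobordism category spectrum. The Madsen--Weiss theorem identifies this spectrum with $\mathbf{MTSO}(2)=\mathrm{Th}(-\gamma_2\to BSO(2))$. The rational cohomology of $\Omega^\infty_0\mathbf{MTSO}(2)$ is then a free graded-commutative algebra on $H^{*+2}(BSO(2);\qq)$ shifted down by two, that is, a polynomial algebra on classes in degrees $2j$ for $j\ge1$. These classes are exactly $\kappa_j=\pi_*(e^{j+1})$. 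Combining with Step 1 gives $\qq[\kappa_1,\kappa_2,\ldots]\cong H^*(\M_g)$ in the stable range. This is by far the main obstacle: the identification of the cobordism category via sheaf models and $h$-principle and microflexibility arguments is the deep content. In a proof plan we would simply cite Madsen--Weiss rather than reprove it.

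\textbf{Step 3 (marked points, after Looijenga).} We induct on $n$. The forgetful map $\M_{g,n+1}\to\M_{g,n}$ is, up to homotopy, the universal curve with the $n$ sections removed. Alternatively we compare $\Gamma_{g,n}$ with the boundary version through the central extension $\zz^n\to\Gamma^\partial_{g,n}\to\Gamma_{g,n}$. Here $B\Gamma^\partial_{g,n}$ is a $(S^1)^n$-bundle over $B\Gamma_{g,n}$ whose Chern classes are the $\psi_i$. Stability for the surfaces with boundary identifies $H^*(\Gamma^\partial_{g,n})$ with $H^*(\Gamma^\partial_{g,1})$ in the stable range. The Gysin sequences of the circle bundles then show that $H^*(\Gamma_{g,n})$ is a free module over $H^*(\Gamma^\partial_{g,n})$ on the monomials in the $\psi_i$; the key point is that the $\psi_i$ are algebraically independent in the stable range. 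This gives $H^*(\M_{g,n})\cong H^*(\M_g)[\psi_1,\ldots,\psi_n]$ in degrees $\le\frac23g-\frac23$. Finally we check that the abstract generators coincide with the geometric classes $\kappa_j$ and $\psi_i$ defined via $c_1(\sigma_i^*\omega_f)$ and $f_*$. This follows from naturality, together with the fact that the $\kappa_j$ on $\M_{g,n}$ differ from pullbacks of those on $\M_g$ only by polynomials in the $\psi_i$.
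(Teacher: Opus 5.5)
The paper gives no proof of this statement. It quotes it from \cite[Theorem 1.1]{Wahl} as the combination of Harer stability, the Madsen--Weiss theorem, and Looijenga's extension to marked points. Your outline reconstructs that standard argument and is correct. Compared with the bare citation, it shows where each ingredient enters, in particular why the range for $\M_{g,n}$ is the same as for $\M_g$. The two deep inputs, stability and Madsen--Weiss, are still cited rather than proved.

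One point in Step 3 should be stated more carefully. Pullback along the circle bundles goes from $H^*(\Gamma_{g,n})$ to $H^*(\Gamma^\partial_{g,n})$, not the other way. So the module structure must come from $H^*(\Gamma_g)$ via the forgetful map $\Gamma_{g,n}\to\Gamma_g$. The real input is that the composite $H^*(\Gamma_g)\to H^*(\Gamma_{g,n})\to H^*(\Gamma^\partial_{g,n})$ is the capping isomorphism in the stable range. Hence, for each circle bundle $E\to B$ in the tower, pullback is surjective in that range. This kills the Gysin connecting maps and forces multiplication by the Euler class $\pm\psi_i$ to be injective, giving exact sequences $0\to H^{k-2}(B)\to H^k(B)\to H^k(E)\to 0$. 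So the algebraic independence of the $\psi_i$ is the output of the Gysin argument, not a separate fact you need to supply. For $n\ge 2$, run the argument one circle at a time through the groups with $k$ boundary components and $n-k$ marked points; surjectivity holds at each stage for the same reason.
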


\section{Cohomology in low weights} \label{sec:coh}

As we have seen in the previous section, for $g > 0$, there are only finitely many moduli spaces $\Mb_{g,n}$ whose entire cohomology ring is tautological. Nevertheless, we may restrict to a given cohomological degree $k$ and ask: Is $H^k(\Mb_{g,n})$ tautological, and if not, can we give generators?  
The first results in this direction are due to Arbarello and Cornalba in the 1990s.

\begin{thm}[Arbarello--Cornalba \cite{ArbarelloCornalba}] \label{ac2}
Let $k = 0, 1, 2, 3, 5$. Then $H^k(\Mb_{g,n}) = RH^k(\Mb_{g,n})$ for all $g$ and $n$.
\end{thm}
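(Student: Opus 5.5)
The plan is the Arbarello--Cornalba induction: combine the weight/excision sequence of Section \ref{wt} with vanishing results for the low-degree cohomology of the interiors $\M_{g,n}$. The argument is a double induction on $(g,n)$, ordered by $2g+n$. The key input is the dual form of the complex in Section \ref{wt}, which says that the kernel of the restriction $H^k(\Mb_{g,n}) \to H^k(\M_{g,n})$ is spanned by pushforwards of classes from the normalization of the boundary. More precisely, there is an exact sequence
\[ \bigoplus_{|E(\Gamma)|=1} H^{k-2}(\Mb_\Gamma) \xrightarrow{\xi_{\Gamma*}} H^k(\Mb_{g,n}) \to W_k H^k(\M_{g,n}) \to 0, \]
because the image of $H^k(\Mb_{g,n})$ in $H^k(\M_{g,n})$ is exactly the lowest-weight part $W_kH^k(\M_{g,n})$.

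\textbf{Step 1: boundary pushforwards.} Each $\Mb_\Gamma$ with one edge is either $\Mb_{g-1,n+2}$ or a product $\Mb_{g_1,n_1+1}\times\Mb_{g_2,n_2+1}$. Applying Künneth, $H^{k-2}$ of such a product decomposes into pieces $H^a\otimes H^b$ with $a+b=k-2\le 3$, so $a,b\in\{0,1,2,3\}$.

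The degrees that appear need care. For $k=5$ we need $H^3$ of the smaller spaces, and for $k=3$ we need $H^1$. Hence it is natural to prove the statement for $k\in\{0,1,3,5\}$ simultaneously with $k=2$, and to first show that odd cohomology vanishes: $H^1=H^3=H^5=0$. This vanishing is also the content of the statement for odd $k$, since odd tautological classes are zero.

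By induction, all the pieces are tautological, or zero in the odd case. Tautological classes are closed under pushforward along gluing maps, so the image of the left-hand map is tautological.

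\textbf{Step 2: the interior.} It remains to show that $W_kH^k(\M_{g,n})$ vanishes for $k=1,3,5$ and is spanned by $\kappa_1$ and the $\psi_i$ for $k=2$. For this I would use Harer's computations of the low-degree cohomology of $\M_{g,n}$, as in the Madsen--Weiss/Harer stability theorem stated above. These give $H^1(\M_{g,n})=0$ and $H^3(\M_{g,n})=0$, and $H^2(\M_{g,n})$ spanned by $\kappa_1,\psi_i$ in the stable range, which needs $g\ge 3$ or so with Harer's sharper bounds. For $H^5(\M_{g,n})$ I would use Harer's vanishing of $H^5$ for large $g$.

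For small $g$, where stability does not yet apply, I would handle the interiors directly via explicit models: $\M_{0,n}$ is a hyperplane arrangement complement, whose cohomology has $W_kH^k=0$ for $k>0$; genus $1$ uses the modular curve description; genus $2$ and $3$ use hyperelliptic and plane quartic models. Alternatively, for small genus I would invoke the Chow results and Lemma \ref{open-ckgp}, which give $W_kH^k(\M_{g,n})=0$ for odd $k$ and Tate/tautological for even $k$ whenever $\M_{g,n}$ has the CKgP.

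\textbf{Main obstacle.} I expect the hardest part to be the odd degrees $3$ and $5$ in low genus outside the stable range. In particular, showing $W_5H^5(\M_{g,n})=0$ for all small $g$ and all $n$ is delicate, since $n$ is unbounded and the Chow results only cover finitely many $(g,n)$.

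To handle it, I would first try to apply the same weight argument one level down. Since $W_5H^5$ is a quotient of $H^5(\overline{\M}_{g,n})$, and that group has pure Hodge structure of weight $5$, it suffices to show that $H^5(\Mb_{g,n})$ has no $(p,q)$ parts beyond those supported on the boundary. For $g\le 2$ I would try to get this from a Hodge-theoretic argument: holomorphic $p$-forms vanish for $p\le 5$, which follows from Petersen-type descriptions or from the vanishing of low-weight cusp forms (no cusp forms of weight $\le 6$ for $\SL_2\zz$). That covers the genus $1$ contributions, and genus $2$ Siegel forms of small weight also vanish.
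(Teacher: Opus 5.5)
There is a genuine gap, and it is in your Step~2. Step~1 is sound: the exact sequence is Deligne's, and the boundary term is handled by induction together with closure of tautological classes under gluing pushforwards. One small fix there: order the induction by $g$ first, since $\Mb_{g-1,n+2}$ has the same value of $2g+n$.

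Step~2 needs control of $W_kH^k(\M_{g,n})$ for \emph{every} $(g,n)$, and your inputs do not give it.
\begin{itemize}
\item The stable range depends only on $g$. With the range quoted in the paper, $k=5$ needs $g\ge 9$ and $k=3$ needs $g\ge 6$; Harer's $H^2$ computation needs $g\ge 3$.
\item The Chow/CKgP results of Section~\ref{sec:taut} cover only finitely many $(g,n)$.
\end{itemize}
So for each small $g$ infinitely many $n$ remain, e.g.\ $W_5H^5(\M_{5,n})$ or $W_2H^2(\M_{2,n})$ for all $n$. ``Explicit models'' for arbitrary $n$ would amount to much deeper results (e.g.\ Petersen's) in genus $1$ and $2$, and you offer nothing for $3\le g\le 8$.

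Your Hodge-theoretic repair does not close this either. Reducing to ``$H^5(\Mb_{g,n})$ has nothing beyond boundary classes'' is the statement being proved. Vanishing of holomorphic forms only kills $H^{5,0}\oplus H^{0,5}$ and says nothing about $H^{4,1}$ or $H^{3,2}$.

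The missing idea is the paper's Lemma~\ref{ind}, which moves the interior contribution to the complementary degree, where it vanishes. Harer's bound on the virtual cohomological dimension of $\M_{g,n}$ gives, by Poincar\'e duality, $H^k_c(\M_{g,n})=0$ for $k$ small relative to $2g-2+n$; this is what $d(g,n)$ records. Hence $H^k(\Mb_{g,n})\to H^k(\partial\M_{g,n})$ is injective. Since $H^k(\Mb_{g,n})$ is pure of weight $k$, and the kernel of $H^k(\partial\M_{g,n})\to H^k(\widetilde{\partial\M_{g,n}})$ has weights $<k$, the pullback to the normalization is injective as well.

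In your own language: apply your exact sequence in degree $2\dim\Mb_{g,n}-k$. There the interior term vanishes because that degree exceeds the vcd, and dualizing gives the injectivity above.

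Consequently the interior only has to be understood in the finitely many base cases $d(g,n)\le k$. For $k\le 5$ these are $g=0,\ n\le 9$; $g=1,\ n\le 5$; and $g=2,\ n\le 3$.
\begin{itemize}
\item For odd $k$, K\"unneth plus induction then kills $H^k$, since one tensor factor always has odd degree.
\item For $k=2$ you still have to show that the boundary restrictions of $H^2(\Mb_{g,n})$ are restrictions of tautological classes. This is the delicate part of Arbarello--Cornalba's argument.
\end{itemize}
Note that no low-degree vanishing of the interior is needed, and for $k=2$ none holds: $\kappa_1\neq 0$ in $W_2H^2(\M_{g,n})$ for $g\ge 3$. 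That is why attacking $W_kH^k(\M_{g,n})$ directly, as in your Step~2, puts the effort in the wrong place.
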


The tautological ring vanishes in odd degrees, so
when $k$ is odd, the theorem simply says that $H^k(\Mb_{g,n}) = 0$. The most difficult case of the theorem is when $k = 2$. In this case, Arbarello and Cornalba provide a basis for $RH^2(\Mb_{g,n})$. For $g \geq 3$, there are no relations among the generators $\kappa_1, \psi_1, \ldots, \psi_n$, and the fundamental classes of boundary strata. For $g \leq 2$, explicit relations are given, see \cite[Theorem 2.2]{ArbarelloCornalba}.

\medskip
Arbarello and Cornalba's proof utilizes pullbacks to the boundary to set up an inductive argument. The essential lemma is the following.

\begin{lem}[Lemma 2.5 \cite{ArbarelloCornalba}, Proposition 2.1 \cite{BFP}] \label{ind}
Define 
\[d(g,n) = \begin{cases} n - 4 & \text{if $g = 0$} \\ 2g - 2 + n & \text{if $g>0$ and $n \geq 2$} \\
2g. &\text{if $g > 0$ and $n = 0,1$.}\end{cases}\]
If $d(g,n) > k$, then the pullback map $H^k(\Mb_{g,n}) \to H^k(\widetilde{\partial \M_{g,n}})$ is injective.
\end{lem}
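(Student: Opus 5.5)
The plan is to use the weight spectral sequence of Section \ref{wt} in its dual form, and to argue via Hodge theory. The kernel of the pullback $d_0\colon H^k(\Mb_{g,n}) \to H^k(\widetilde{\partial \M_{g,n}})$ is the image of the compactly supported cohomology of the interior. More precisely, it is exactly the pure part
\[\gr_k H^k_c(\M_{g,n}) = W_k H^k_c(\M_{g,n}),\]
which is the image of $H^k_c(\M_{g,n}) \to H^k(\Mb_{g,n})$. This follows from the long exact sequence of the pair $(\Mb_{g,n}, \partial\M_{g,n})$, together with the fact that for the normal crossings boundary, the kernel of restriction to $\partial \M_{g,n}$ agrees with the kernel of pullback to its normalization on the pure weight $k$ part.

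So the task reduces to showing $W_k H^k_c(\M_{g,n}) = 0$, or at least that its image in $H^k(\Mb_{g,n})$ vanishes, whenever $k < d(g,n)$. By Poincaré duality on the smooth space $\M_{g,n}$ of complex dimension $3g-3+n$, the group $H^k_c(\M_{g,n})$ is dual to $H^{6g-6+2n-k}(\M_{g,n})$. It therefore suffices to show that $H^j(\M_{g,n}) = 0$ for
\[j > 6g-6+2n-d(g,n).\]

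This is precisely the virtual cohomological dimension bound of Harer. Using ribbon graph (Strebel) decompositions, $\M_{g,n}$ has the homotopy type of a complex of dimension $4g-4+n$ for $g>0$ and $n\ge1$, and $4g-5$ for $n=0$ (Harer). For $g=0$ the dimension is $n-3$, since $\M_{0,n}$ is an affine variety of dimension $n-3$ (Artin vanishing).

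The next step is to check the numerics case by case.
\begin{itemize}
\item For $g=0$: we need $H^j = 0$ for $j > 2(n-3) - (n-4) = n-2$, and Artin vanishing gives vanishing already for $j > n-3$.
\item For $g>0$ and $n\ge 2$ (in fact $n \geq 1$): $6g-6+2n - (2g-2+n) = 4g-4+n$, which matches the vcd exactly.
\item For $n=0,1$ with $d=2g$: the bound $6g-6+2n-2g = 4g-6+2n$ must dominate the vcd. This gives $4g-6 \ge 4g-5$ for $n=0$, which fails by one, so it needs care. The natural fix is a slightly sharper argument there, for instance an additional $H^{4g-5}$ vanishing from Church--Farb--Putman or Morita-type results, or handling it via the $n=1$ forgetful map.
\end{itemize}
These boundary cases for $n=0,1$ are where I expect the main obstacle: getting the exact threshold right.

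An alternative, closer to Arbarello--Cornalba, is an induction that uses forgetful maps to reduce to small $n$. I would try that if the vcd numerics do not close.

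Finally, to justify the identification $\ker d_0 = W_kH^k_c(\M_{g,n})$, I would use the complex displayed in Section \ref{wt}: the $j=0$ homology is $\ker d_0 = \gr_k H^k_c(\M_{g,n})$, and since $H^k_c$ has weights $\le k$, this is a quotient of $H^k_c(\M_{g,n})$. That quotient vanishes when $H^k_c(\M_{g,n})$ does.
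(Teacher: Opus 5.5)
The paper gives no proof of its own here, only the citations to Arbarello--Cornalba and Bergstr\"om--Faber--Payne. Your route is the argument behind those citations: identify $\ker d_0$ with $\gr_k H^k_c(\M_{g,n})$, dualize, and apply Harer's virtual cohomological dimension. Your numerics for $g=0$ and for $g>0$, $n\geq 2$ are right. One small slip: $W_kH^k_c(\M_{g,n})$ is all of $H^k_c$. The pure part is the quotient $\gr_k H^k_c$, which is dual to $W_jH^j(\M_{g,n})=\im\bigl(H^j(\Mb_{g,n})\to H^j(\M_{g,n})\bigr)$ with $j=2\dim\M_{g,n}-k$.

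The gap is that the proof is not finished for $n\in\{0,1\}$ in degree $k=2g-1$, and you underestimate the problem at $n=1$. For both $n=0$ and $n=1$ we have $2\dim\M_{g,n}-\mathrm{vcd}=2g-1$, so Harer gives injectivity only for $k\leq 2g-2$. The lemma's $d(g,0)=d(g,1)=2g$ demands the extra degree $k=2g-1$. This means $j$ equals the vcd itself ($4g-5$, resp.\ $4g-3$), and a dimension bound says nothing there. Your parenthetical ``in fact $n\geq 1$'' is correct only for the weaker threshold $2g-1$. For the same reason, reducing $n=0$ to $n=1$ along $\pi\colon\Mb_{g,1}\to\Mb_g$ gains nothing, even though the reduction itself is legitimate ($\pi^*$ is injective and $\partial\M_{g,1}=\pi^{-1}(\partial\M_g)$).

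The missing input is the one you mention but do not use: $H^{4g-5}(\M_g;\qq)=0$ for $g\geq 2$ (Church--Farb--Putman; Morita--Sakasai--Suzuki). This settles $n=0$. Then $n=1$ follows in the opposite direction, from the Leray spectral sequence of $\pi\colon\M_{g,1}\to\M_g$. The terms that could contribute to $H^{4g-3}(\M_{g,1};\qq)$ are:
\begin{itemize}
\item $H^{4g-3}(\M_g;\qq)$ and $H^{4g-4}(\M_g;R^1\pi_*\qq)$, which vanish by Harer, since the vcd bounds cohomology with any rational local system;
\item $H^{4g-5}(\M_g;R^2\pi_*\qq)\cong H^{4g-5}(\M_g;\qq)$, which is zero by the vanishing above.
\end{itemize}
The remaining case $(g,n)=(1,1)$ holds because $H^1(\M_{1,1};\qq)=0$. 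With these additions your argument is complete.
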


To see the utility of the lemma, we sketch how it can be used prove the vanishing of low-degree odd cohomology groups inductively. Fix some odd integer $k$. Suppose that we know $H^{k'}(\Mb_{g',n'}) = 0$ for odd $k' \leq k$ and the finitely many $g'$ and $n'$ with $d(g',n') \leq k$. These are the base cases of the induction. Now suppose we are given some $g, n$ with $d(g,n) > k$. By Lemma \ref{ind}, we have an injection
\begin{align*}
H^k(\Mb_{g,n}) \hookrightarrow \ &H^k(\widetilde{\partial \M_{g,n}}) = H^k(\Mb_{g-1,n+2}) \oplus \bigoplus_{\substack{A_1 \sqcup A_2 = \{1, \ldots, n\} \\ g_1 + g_2 = g }} H^{k}(\Mb_{g_1,A_1\cup p} \times \Mb_{g_2, A_2 \cup p'})
 \\
&=H^k(\Mb_{g-1,n+2}) \oplus \bigoplus_{\substack{A_1 \sqcup A_2= \{1, \ldots, n\} \\ g_1 + g_2 = g }} \bigoplus_{k_1+k_2 = k} H^{k_1}(\Mb_{g_1,A_1\cup p}) \otimes H^{k_2}(\Mb_{g_2, A_2 \cup p'})
\end{align*}
where we have used the K\"unneth formula in the second row. By induction on $g$, we may assume $H^k(\Mb_{g-1,n+2}) = 0$.
Since $k$ is odd, one of $k_1$ or $k_2$ is odd, say $k_i$. Additionally, either $g_i < g$ or if $g_i = g$ then $|A_i| + 1 < n$. Thus, by induction, we may assume that $H^{k_i}(\Mb_{g_i,A_i\cup p}) = 0$. This shows that the right-hand side vanishes. Since the map is injective, we conclude that $H^k(\Mb_{g,n}) = 0$ for all $k$.

\medskip
In order to improve upon Arbarello and Cornalba's results and run this strategy for odd values of $k$ greater than $5$, more base cases were needed, particularly the cases $g = 4$ and $n \leq 3$. In \cite{BFP}, Bergstr\"om--Faber--Payne proved the required odd cohomology vanishing for these base cases via point counting. They use models of pointed genus $4$ cures as complete intersections of a quadric and a cubic in $\pp^3$ and an intricate sieve to count the smooth such intersections. In the end, they find that point count is given by a polynomial in $q$. By Lemma \ref{be}, it follows that the odd cohomology groups of $\Mb_{4,n}$ for $n \leq 3$ vanish and the even cohomology groups are pure Tate, with ranks given by corresponding coefficients of the polynomial. Using Arbarello and Cornalba's inductive method, they obtain the following theorem.

\begin{thm}[Bergstr\"om--Faber--Payne \cite{BFP}] \label{bfp}
Let $k = 7, 9$. Then $H^k(\Mb_{g,n}) = 0$ for all $g$ and $n$.
\end{thm}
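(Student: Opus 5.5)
The plan is to run the Arbarello--Cornalba induction sketched after Lemma \ref{ind}, separately for $k=7$ and $k=9$. Fix an odd $k\in\{7,9\}$. We already know by Theorem \ref{ac2} that $H^{k'}(\Mb_{g,n})=0$ for all $g,n$ when $k'\in\{1,3,5\}$. Assume inductively that $H^{k'}(\Mb_{g',n'})=0$ for every odd $k'<k$ and all $(g',n')$; for $k=7$ this is Theorem \ref{ac2}, and for $k=9$ it is Theorem \ref{ac2} together with the case $k=7$. The induction on $(g,n)$, ordered by $g$ and then by $n$, then works as in the sketch. If $d(g,n)>k$, Lemma \ref{ind} embeds $H^k(\Mb_{g,n})$ into $H^k(\widetilde{\partial\M_{g,n}})$. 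By the K\"unneth formula, each summand of the target is either $H^k(\Mb_{g-1,n+2})$ or a sum of terms $H^{k_1}\otimes H^{k_2}$ with some $k_i$ odd and $k_i\le k$, on a moduli space that is smaller in the ordering. All of these vanish, either by induction on $(g,n)$ or by the hypothesis on smaller odd degrees.

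The remaining work is the base cases, the finitely many $(g,n)$ with $d(g,n)\le k\le 9$. In genus $0$, $H^{\mathrm{odd}}(\Mb_{0,n})=0$ for all $n$ by Keel's results (the Chow ring is tautological and Corollary \ref{rhcor} applies). For $g\ge 1$ with $n\ge 2$ we need $2g-2+n\le 9$, and for $n\le 1$ we need $2g\le 9$, so $g\le 4$. Whenever the chart gives a filled circle, i.e.\ $A^*(\Mb_{g,n})=R^*(\Mb_{g,n})$, Corollary \ref{rhcor} and Lemma \ref{cl_iso} force all odd cohomology to vanish. The relevant cases are:
\begin{itemize}
\item genus $1$ with $n\le 10$, from Belorousski;
\item genus $2$ with $n\le 9$;
\item genus $3$ with $n\le 8$.
\end{itemize}
This covers every base case with $g\le 3$. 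Note that $\Mb_{1,11}$ has $d=11>9$, so it is not a base case. That is consistent with $H^{11}\ne 0$, since $k=11$ is outside our range.

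The main obstacle is genus $4$ with $n\le 3$, where $d(4,n)\le 9$. The chart gives $A^*(\Mb_{4,n})=R^*$ for $n\le 6$, which would settle these cases directly. In keeping with the original argument, however, I would instead follow Bergstr\"om--Faber--Payne and count points. A non-hyperelliptic, non-special pointed genus $4$ curve is modeled as a smooth complete intersection of a quadric and a cubic in $\pp^3$, and the hyperelliptic and boundary strata are handled separately. A sieve over $\ff_q$ then shows that $\#\Mb_{4,n}(\ff_q)$ is a polynomial in $q$ for $n\le 3$; the boundary contributions are computed via the Getzler--Kapranov formula from lower-genus equivariant counts. Theorem \ref{be} then yields vanishing of all odd cohomology of $\Mb_{4,n}$ for $n\le 3$. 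I expect the sieve for smooth quadric--cubic intersections with marked points to be the hardest step. With the base cases established, the induction finishes the proof for both $k=7$ and $k=9$.
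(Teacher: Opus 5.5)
Your proposal is correct and follows the paper's argument: the Arbarello--Cornalba induction via Lemma \ref{ind}, with the base cases $d(g,n)\le 9$ settled by the chart for $g\le 3$, and for $\Mb_{4,n}$ with $n\le 3$ by the Bergstr\"om--Faber--Payne polynomial point count combined with Theorem \ref{be}. You also note, as the paper's subsequent remark does, that the filled circles for $g=4$, $n\le 6$ would settle those genus $4$ base cases directly via Corollary \ref{rhcor}.
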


\begin{rem}
The results of \cite{ckgp} give an alternative approach to 
proving the vanishing of odd cohomology for the base cases. Indeed, for every $g$ and $n$ with $d(g,n) \leq 9$, there is a filled circle in the chart in Section \ref{sec:taut}. Thus, by Corollary \ref{rhcor}, $H^*(\Mb_{g,n}) = RH^*(\Mb_{g,n})$, so all odd cohomology vanishes. Note however, that the genus $4$ cases lie beyond the range where computational techniques can determine the ranks of $RH^*(\Mb_{g,n})$, so this method does not determine the exact polynomial for the point counts, as was done in \cite{BFP} for $g = 4$ and $n \leq 3$.
\end{rem}

For $k = 11$, as we have seen earlier, $H^{11}(\Mb_{1,11}) \neq 0$, so there will be no uniform vanishing result. Nevertheless, for all $(g, n) \neq (1, 11)$ with $d(g,n) \leq 11$, there is a filled circle in the chart in Section \ref{sec:taut}, which means in particular that all odd cohomology vanishes. Thus, the only non-vanishing odd cohomology group in the base case region is $H^{11}(\Mb_{1,11})$. The following result is proved using improvements upon Arbarello and Cornalba's original induction. Below, for a partition $\lambda$ of $n$, $V_\lambda$ denotes the associated irreducible $\mathbb{S}_n$-representation.

\begin{thm}[Canning--L.--Payne \cite{h11}] \label{clp11}
We have $H^{11}(\Mb_{g,n}) = 0$ for all $g$ and $n$ unless $g = 1$ and $n \geq 11$ in which case $H^{11}(\Mb_{1,n})$ is generated by pullbacks along the forgetful maps $\Mb_{1,n} \to \Mb_{1,11}$. In fact, we have an $\mathbb{S}_n$-equivariant isomorphism of Galois representations
\[H^{11}(\Mb_{1,n}) = V_{n-10,1^{10}} \otimes H^{11}(\Mb_{1,11}) = V_{n-10,1^{10}} \otimes \mathsf{S}_{12}.\]
\end{thm}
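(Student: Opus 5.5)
The plan is to run the Arbarello--Cornalba induction of Lemma \ref{ind} in degree $k=11$, now tracking a nonzero contribution instead of proving vanishing. All odd cohomology in degrees below $11$ vanishes by Theorems \ref{ac2} and \ref{bfp}. The base cases are the finitely many $(g,n)$ with $d(g,n)\le 11$. For $(g,n)\neq(1,11)$ these carry filled circles in the chart, so $H^{11}=0$ by Corollary \ref{rhcor}. For $(1,11)$, $H^{11}(\Mb_{1,11})=\mathsf{S}_{12}$ is two-dimensional, and as an $\mathbb{S}_{11}$-representation it is the sign representation $V_{1^{11}}$ (from the explicit holomorphic form, which is alternating in the markings).

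For the inductive step, take $d(g,n)>11$. Lemma \ref{ind} gives an injection of $H^{11}(\Mb_{g,n})$ into $H^{11}$ of the normalized boundary. Expand the right-hand side by Künneth. Because odd cohomology vanishes below degree $11$, each product term $H^{k_1}\otimes H^{k_2}$ with $k_1+k_2=11$ survives only if one factor is $H^{11}(\Mb_{1,m})$ and the other is $H^0$. So the target consists of $H^{11}(\Mb_{g-1,n+2})$ together with terms $H^{11}(\Mb_{g_1,A_1\cup p})\otimes H^0(\Mb_{g_2,A_2\cup p'})$, and by induction these vanish unless the relevant genus is $1$. This immediately gives $H^{11}(\Mb_{g,n})=0$ for $g\ge 2$. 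The nonseparating term $H^{11}(\Mb_{g-1,n+2})$ does have $g-1=1$ when $g=2$, so there I need an extra argument. For $g=2$, I will use the weight complex of Section \ref{wt}: it expresses the kernel of $d_0$ in terms of $\gr_{11}H^{11}_c(\M_{2,n})$. I would then argue that any class injected into the boundary must be compatible on overlaps (the $d_1$ condition), and that the image of $H^{11}(\Mb_{1,n+2})$ restricted to the codimension-two strata is incompatible with the separating strata, which contain no genus-one $H^{11}$ with the right markings. This compatibility check is the step I expect to be the main obstacle. My first attempt would be to show $\gr_{11}H^{11}_c(\M_{2,n})$ and the relevant $W_{11}$ pieces vanish using known point counts for $\M_{2,n}$ (the chart plus Lemma \ref{chik}).

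For $g=1$, the argument identifies a generating set and then computes the representation. Write $H^{11}(\Mb_{1,n})\hookrightarrow\bigoplus_{A}H^{11}(\Mb_{1,A\cup p})$, summed over $|A^c|\ge2$. By induction each summand is generated by pullbacks from $\Mb_{1,11}$. I will then compare with the subspace $P\subset H^{11}(\Mb_{1,n})$ spanned by pullbacks under all forgetful maps $\Mb_{1,n}\to\Mb_{1,S}$ with $|S|=11$. The goal is to show that restriction to the boundary maps $P$ onto the image of all of $H^{11}(\Mb_{1,n})$, which forces $P=H^{11}(\Mb_{1,n})$.

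To identify the representation, I will compute $P$ as a quotient of $\mathrm{Ind}$-type sums of $\mathsf{S}_{12}\otimes\mathrm{sgn}_{11}$ over $11$-subsets $S$, and find the relations among these pullbacks. These come from the fact that pulling back and then pushing forward along a forgetful map is zero, because $f_*$ kills pullbacks in odd degree and fibers are $1$-dimensional. I expect these relations to cut $\bigoplus_{|S|=11}\mathrm{sgn}_S$ down to $V_{n-10,1^{10}}$. This is consistent with $n=11$, where the result is $V_{1^{11}}$, and with branching, since $V_{n-10,1^{10}}\downarrow$ counts match the inductive boundary description. A final dimension count of the boundary image pins down equality.
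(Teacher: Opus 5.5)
\textbf{There is a genuine gap in the inductive step for $g\ge 2$.} Your base cases and the Künneth reduction are fine. The problem is the sentence claiming that, once every product term is reduced to $H^{11}(\Mb_{g_1,A_1\cup p})\otimes H^0(\Mb_{g_2,A_2\cup p'})$, injectivity ``immediately gives'' $H^{11}(\Mb_{g,n})=0$ for $g\ge 2$.

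It does not. For every $g\ge 2$ and $n\ge 10$, the normalized boundary contains separating divisors $\Mb_{1,A\cup p}\times\Mb_{g-1,A^c\cup p'}$ with $|A|\ge 10$. Their $H^{11}$ contains $H^{11}(\Mb_{1,A\cup p})\otimes H^0\neq 0$; for example $\Mb_{1,11}\times\Mb_{1,1}\to\Mb_{2,10}$, where $d(2,10)=12>11$. So the target of the injection in Lemma~\ref{ind} is nonzero, and injectivity alone proves nothing in any genus $g\ge2$ once $n\ge 10$. For the same reason, your claim that the separating strata of $\Mb_{2,n}$ ``contain no genus-one $H^{11}$ with the right markings'' is false.

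The missing ingredient is the actual improvement over Arbarello--Cornalba: show that the pullback of $\alpha$ to these divisors vanishes by comparing pullbacks on codimension-two strata. For instance, restrict the pullback on $\Mb_{1,A\cup p}\times\Mb_{g-1,A^c\cup p'}$ to the stratum $\Mb_{1,A\cup p}\times\Mb_{g-2,A^c\cup\{p',q,q'\}}$. This stratum also lies in the nonseparating divisor, so its genus-one part $\beta_A$ is determined by the pullback $\gamma\in H^{11}(\Mb_{g-1,n+2})$. For $g\ge 3$ that class vanishes by induction, so everything hinges on genus $2$. There you must kill $\gamma\in H^{11}(\Mb_{1,n+2})$. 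Two facts are available for this. First, $\gamma$ is invariant under swapping the two glued points. Second, on the stratum where a genus-one vertex is joined by two edges to a genus-zero vertex, the two factorizations through the nonseparating divisor force the restriction of $\gamma$ to be symmetric in two markings of the genus-one factor. This is exactly where the fact that $H^{11}(\Mb_{1,11})$ is the sign representation (so $H^{11}(\Mb_{1,11})^{\mathbb{S}_2}=0$) has to enter.

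\textbf{Your genus-$2$ fallback cannot supply this.}
\begin{itemize}
\item Lemma~\ref{chik} only gives vanishing of the alternating sum $\chi_{11}$, never of an individual graded piece.
\item Its hypothesis fails in the range you need: the chart shows $\#\M_{2,n}(\ff_q)$ is not polynomial for $n\ge 10$ in the computed range, while $n\le 9$ are already base cases.
\item $\gr_{11}H^{11}_c(\M_{2,n})=\ker d_0$ is already zero by Lemma~\ref{ind} for $n\ge 10$. The object you must control is the image of $d_0$, i.e.\ $H^{11}(\Mb_{2,n})$ itself.
\end{itemize}

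\textbf{The genus-$1$ part has two further soft spots.}
\begin{itemize}
\item You assert, but do not argue, that $P$ surjects onto the boundary image of $H^{11}(\Mb_{1,n})$.
\item The identity $f_*f^*\beta=0$ constrains pushforwards; it is not a linear relation among the classes $\pi_S^*\omega$ in $H^{11}(\Mb_{1,n})$. By Pieri, $\bigoplus_{|S|=11}\mathrm{sgn}_S\cong V_{n-10,1^{10}}\oplus V_{n-11,1^{11}}$ for $n\ge 12$. The second summand has to be killed by a genuine mechanism, such as an explicit identity among the pulled-back holomorphic $11$-forms, or a careful analysis of their boundary restrictions combined with Lemma~\ref{ind}.
\end{itemize}
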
 

Continuing on to $k = 13$, there are several $(g, n)$ with $d(g, n) \leq 13$ that lie beyond the filled circles in Section \ref{sec:taut}. Nevertheless, for every $(g,n)$ with $d(g,n) \leq 13$ and no filled circle, we have either $g \leq 2$ or an open circle at $(g,n)$. For $g \leq 2$ work of Petersen shows that all degree $13$ cohomology is pushed forward from the boundary. For the cases with $g \geq 3$, the presence of an open circle together with Lemma \ref{open-ckgp}, also implies that all degree $13$ cohomology is pushed forward from the boundary.
Since $H^{11}$ is understood by Theorem \ref{clp11}, this gives adequate understanding of these base cases to run an inductive argument in the style of Arbarello and Cornalba's $k = 2$ case.
This succeeds in determining $H^{13}(\Mb_{g,n})$ with generators and relations. 

\begin{thm}[Canning--L.--Payne--Willwacher \cite{h13}] \label{clpw13}
The cohomology group $H^{13}(\Mb_{g,n})$ is spanned by the images of the pushforward maps
\[\iota_A: H^{11}(\Mb_{1,A \cup p}) \otimes H^0(\Mb_{g-1,A^c \cup p'}) \rightarrow H^{13}(\Mb_{g,n}).\]
If $g \geq 2$ then the sum $\bigoplus_{A} \iota_{A*}$ is also injective. In fact, for $g \geq 2$, we have an $\mathbb{S}_n$-equivariant isomorphism of Galois representations
\[H^{13}(\Mb_{g,n}) = \left(\bigoplus_{m=10}^n \mathrm{Ind}_{\mathbb{S}_m \times \mathbb{S}_{n-m}}^{\mathbb{S}_n} \left( (V_{m-9,1^9} \oplus V_{m-10,1^{10}}) \boxtimes \mathbf{1}\right)\right)\otimes \mathsf{LS}_{12}.\]
\end{thm}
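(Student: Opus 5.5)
The plan is an induction in the style of Arbarello--Cornalba, with Lemma \ref{ind} as the engine and Theorem \ref{clp11} as the input for the boundary terms.

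\textbf{Base cases.} These are the pairs $(g,n)$ with $d(g,n)\le 13$.
\begin{itemize}
\item For every such pair with a filled circle in the chart, Corollary \ref{rhcor} gives $H^{13}(\Mb_{g,n})=0$.
\item For the remaining pairs we show that $H^{13}(\Mb_{g,n})$ is pushed forward from the boundary.
\begin{itemize}
\item When $g\le 2$, we cite Petersen's results.
\item When $g\ge 3$, there is an open circle at $(g,n)$, so $\M_{g,n}$ has the CKgP by Remark \ref{rem:ckgp}. Lemma \ref{open-ckgp} then gives $W_{13}H^{13}(\M_{g,n})=0$. Dually, via the complex of Section \ref{wt}, $\gr_{13}H^{13}_c(\M_{g,n})=\ker d_0$. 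Using $H^{13}_c(\M_{g,n}) \cong H^{2\dim-13}(\M_{g,n})^\vee$, the vanishing of the lowest-weight piece of the corresponding ordinary cohomology means $\ker d_0$ vanishes on the dual side. Since $H^{13}(\Mb_{g,n})$ is self-dual under Poincar\'e duality, the restriction map $H^{13}(\Mb_{g,n})\to H^{13}(\M_{g,n})$ is zero. The Gysin sequence then shows that $H^{13}(\Mb_{g,n})$ is the image of $H^{11}(\widetilde{\partial\M_{g,n}})$.
\end{itemize}
\item By the K\"unneth formula and Theorem \ref{clp11}, $H^{11}$ of each boundary component is a sum of terms $H^{11}(\Mb_{1,m})\otimes H^0$, with the other factors carrying no odd cohomology in degree $\le 11$. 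The components $\Mb_{g-1,n+2}$ contribute nothing unless $g-1=1$; those classes can be rewritten through separating gluings, since pullbacks from $\Mb_{1,11}$ factor.
\end{itemize}
In every case, then, $H^{13}$ is spanned by the images $\iota_{A*}$ of the statement.

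\textbf{Inductive step.} For $d(g,n)>13$, Lemma \ref{ind} embeds $H^{13}(\Mb_{g,n})$ into $H^{13}(\widetilde{\partial\M_{g,n}})$. By K\"unneth and Theorems \ref{ac2}, \ref{bfp}, \ref{clp11}, this target decomposes into:
\begin{itemize}
\item $H^{13}$ of smaller spaces, which are known by induction;
\item $H^{11}\otimes H^2$ terms;
\item $H^{13}\otimes H^0$ terms.
\end{itemize}
The pullbacks of the classes $\iota_{A*}(\alpha\otimes 1)$ to these pieces are computed by the excess intersection formula of \cite[Appendix A]{GP}. To show spanning, we take a class $x\in H^{13}(\Mb_{g,n})$. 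We first show that its boundary restrictions are forced to be compatible, using agreement on double intersections, i.e. $d_1$-type compatibility. We then subtract a combination of the $\iota_{A*}$ whose pullbacks match those restrictions. The difference has zero pullback, so it vanishes by injectivity.

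\textbf{Main obstacle.} The hard part is this matching: proving that any compatible system of boundary restrictions comes from the span of the $\iota_{A*}$. This is essentially the Arbarello--Cornalba $k=2$ argument with coefficients in $\mathsf{S}_{12}$. I would organize the bookkeeping $\mathbb S_n$-equivariantly and handle the $H^{11}\otimes H^2$ terms by using the known description of $RH^2$.

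\textbf{Injectivity and the representation formula.} For $g\ge2$, injectivity comes from the same pullbacks: restricting to the divisor of type $\Mb_{1,A\cup p}\times\Mb_{g-1,A^c\cup p'}$ isolates the $\iota_A$ summand, up to a self-intersection term $-\psi-\psi'$ that lands in $H^{11}\otimes H^2$. This is nonzero because $g-1\ge1$ makes $\psi'$ nonzero there. The representation formula then follows from
\[
\mathrm{Ind}\bigl(V_{m-10,1^{10}}\otimes(\text{rep of }H^{11}(\Mb_{1,m+1})\text{ restricted})\bigr).
\]
Branching $V_{m-9,1^{10}}$ from $\mathbb S_{m+1}$ to $\mathbb S_m$ gives $V_{m-9,1^9}\oplus V_{m-10,1^{10}}$. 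Finally, the pushforward contributes the Tate twist $\mathsf L$.
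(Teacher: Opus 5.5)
Your plan follows the paper's outline: vanishing at the filled circles via Corollary \ref{rhcor}, Petersen for $g\le 2$, Lemma \ref{open-ckgp} at the open circles, then an Arbarello--Cornalba induction through Lemma \ref{ind}, with Theorem \ref{clp11} controlling the boundary terms. Like the paper, you defer the actual matching argument. The step you do carry out, injectivity for $g\ge 2$, is wrong as written.

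\textbf{Injectivity.} Pull a relation $\sum_A\iota_{A*}(\alpha_A\otimes 1)=0$ back along $\xi_B\colon\Mb_{1,B\cup p}\times\Mb_{g-1,B^c\cup p'}\to\Mb_{g,n}$. This does not isolate the $B$-summand.
\begin{itemize}
\item For each $A\supsetneq B$, set $T=A\smallsetminus B$. The divisor $D_A$ meets $D_B$ transversally along $\Mb_{1,B\cup p}\times\delta_{0,T\cup\{p'\}}$. Here $\delta_{0,T\cup\{p'\}}$ is the divisor whose general member has a rational tail carrying $T\cup\{p'\}$.
\item This intersection contributes $\beta_A\otimes\delta_{0,T\cup\{p'\}}$, where $\beta_A\in H^{11}(\Mb_{1,B\cup p})$ is obtained by restricting $\alpha_A$.
\item These terms lie in the same K\"unneth component $H^{11}(\Mb_{1,B\cup p})\otimes H^2(\Mb_{g-1,B^c\cup p'})$ as your excess term $-\alpha_B\otimes\psi_{p'}$.
\end{itemize}
So $\psi_{p'}\neq 0$ only shows that each $\iota_{B*}$ is injective on its own. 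It is also not what singles out $g\ge2$: on $\Mb_{0,m}$ with $m\ge 4$ one has $\psi_{p'}\neq0$, yet $\psi_{p'}$ is a combination of exactly these divisors, so cancellation can occur.

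The missing input is that $\psi_{p'}$ is not in the span of the $\delta_{0,T\cup\{p'\}}$ in $H^2(\Mb_{g-1,B^c\cup p'})$.
\begin{itemize}
\item For $g-1\ge2$, this is Arbarello--Cornalba's independence of $\psi$ classes and boundary divisors.
\item For $g-1=1$, it holds because $\psi_{p'}=\frac{1}{12}\delta_{\mathrm{irr}}+\sum_{S\ni p'}\delta_{0,S}$ has nonzero $\delta_{\mathrm{irr}}$-coefficient.
\end{itemize}
You must also check that every other contribution lies in $H^{13}\otimes H^0$, $H^0\otimes H^{13}$ or $H^2\otimes H^{11}$. These come from $A\subsetneq B$, from $A\cap B=\emptyset$, and, when $g=2$, from $A\supseteq B^c$. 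Then apply a functional on $H^2$ that kills every $\delta_{0,T\cup\{p'\}}$ and is $1$ on $\psi_{p'}$; this gives $\alpha_B=0$.

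\textbf{Base cases.} There are two smaller problems here.

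First, $H^{13}(\Mb_{g,n})$ is Poincar\'e dual to $H^{2d-13}$, not to itself. Your $\ker d_0$ computation proves that $d_0$ is injective in degree $13$. That does not imply that restriction to $\M_{g,n}$ vanishes. The vanishing is immediate anyway: the image of $H^{13}(\Mb_{g,n})\to H^{13}(\M_{g,n})$ is $W_{13}H^{13}(\M_{g,n})$, which is $0$ by Lemma \ref{open-ckgp}. The exact sequence in the proof of Proposition \ref{samir8} then gives surjectivity from $H^{11}(\widetilde{\partial \M_{g,n}})$. The injectivity of $d_0$ is still worth keeping, since it extends Lemma \ref{ind} to the open-circle base cases.

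Second, for $g=2$, the gluing map $\Mb_{1,n+2}\to\Mb_{2,n}$ onto the irreducible divisor carries $\mathbb{S}_2$-invariant classes in $H^{11}$. Already for $n=10$ these form $V_{1^{10}}\otimes\mathsf{S}_{12}$. The remark ``pullbacks from $\Mb_{1,11}$ factor'' does not show that their pushforwards lie in the span of the separating $\iota_{A*}$. That is a genuine relation in $H^{13}(\Mb_{2,n})$ and must be proved. One route is to compare boundary pullbacks where $d_0$ is injective; another is to use finer genus-two input.
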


Although similarly explicit presentations of $H^k(\Mb_{g,n})$ are not known for other values of $k$, 
using variations on Arbarello and Cornalba's induction together with the chart in Section \ref{sec:taut} to treat bases cases,
Conjecture \ref{lc} has been confirmed for $X = \Mb_{g,n}$ for $k \leq 15$.

\begin{thm}[Canning--L.--Payne \cite{h11,ste}] \label{lowwt}
For all $g$ and $n$, and for any $k \leq 15$, we have isomorphisms of Galois representations
\[H^{k}(\Mb_{g,n},\qq_\ell)^{\mathrm{ss}} = \begin{cases} \bigoplus \mathsf{L}^{k/2} & \text{if $k$ even} \\
\bigoplus \mathsf{S}_{12} & \text{if $k = 11$} \\
\bigoplus \mathsf{LS}_{12}  &\text{if $k = 13$} \\
\bigoplus \mathsf{L^2S}_{12} \oplus \bigoplus \mathsf{S}_{16} & \text{if $k = 15$} \\
0 & \text{otherwise}.
\end{cases}\]
\end{thm}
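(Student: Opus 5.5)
The plan is an induction on $(g,n)$ in the style of Arbarello--Cornalba, using Lemma \ref{ind} to reduce to finitely many base cases and then handling those base cases with the chart of Section \ref{sec:taut}.

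\textbf{Inductive step.} Fix $k \leq 15$ and suppose $d(g,n) > k$. By Lemma \ref{ind}, the pullback $H^k(\Mb_{g,n}) \hookrightarrow H^k(\widetilde{\partial \M_{g,n}})$ is injective. This map is a morphism of Galois representations, since the gluing maps are defined over $\zz$. Hence $H^k(\Mb_{g,n})^{\mathrm{ss}}$ is a subquotient of the semisimplification of the right-hand side. By the Künneth formula, the right-hand side is
\[H^k(\Mb_{g-1,n+2}) \oplus \bigoplus H^{k_1}(\Mb_{g_1,A_1\cup p}) \otimes H^{k_2}(\Mb_{g_2,A_2\cup p'}), \qquad k_1+k_2=k.\]
Every space appearing is smaller in the induction order, so its semisimplification is known. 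Suppose $k_1 \leq k_2$ and write the factors as in the claimed list. Tensor products of the allowed pieces stay in the list:
\begin{itemize}
\item $\mathsf{L}^a \otimes \mathsf{L}^b$ is $\mathsf{L}^{a+b}$, of even weight;
\item $\mathsf{L}^a \otimes \mathsf{S}_{12}$ twisted into weight $11, 13, 15$ gives $\mathsf{S}_{12}, \mathsf{LS}_{12}, \mathsf{L}^2\mathsf{S}_{12}$;
\item a product of two odd-weight pieces has weight at least $22 > 15$, so it never occurs;
\item $\mathsf{S}_{16}$ only occurs in weight exactly $15$, so it can only be tensored with $\mathsf{L}^0$.
\end{itemize}
So the semisimplified right-hand side lies in the claimed list, and so does its subrepresentation $H^k(\Mb_{g,n})$. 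The vanishing for odd $k \notin \{11,13,15\}$ follows by the same argument, matching the sketch after Lemma \ref{ind} and Theorem \ref{bfp}.

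\textbf{Base cases $d(g,n) \leq k \leq 15$.} This is the main obstacle. There are three situations.
\begin{itemize}
\item \emph{Filled circle.} Corollary \ref{rhcor} gives $H^* = RH^*$. Algebraic cycles are Tate, so $H^k$ is a sum of copies of $\mathsf{L}^{k/2}$, and it vanishes for $k$ odd.
\item \emph{Open circle.} Use the weight spectral sequence of Section \ref{wt} in the dual form: $H^k(\Mb_{g,n})$ has a filtration whose pieces are $\gr_k H^k_c(\M_{g,n})$ and quotients of $H^{k-2}$ of normalized boundary strata, Tate twisted by $\mathsf{L}$ via Gysin pushforward. Remark \ref{rem:ckgp} and Lemma \ref{open-ckgp} show that the pure weight part $W_kH^k(\M_{g,n})$ is Tate, with zero odd part. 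Poincaré duality converts this to the statement that $\gr_k H^k_c(\M_{g,n})$ is Tate: the dual of $W_{k'}H^{k'}$ with $k' = 2\dim - k$ is $\gr_{k}H^{k}_c$ up to twist. The pushed-forward pieces are $\mathsf{L}\otimes H^{k-2}$ of smaller spaces, which are known by induction in weight $\leq 13$. This produces exactly the twists $\mathsf{LS}_{12}$ and $\mathsf{L}^2\mathsf{S}_{12}$.
\item \emph{Genus $g \leq 2$.} Here I would use Petersen's results, Theorem \ref{g1} together with the stated genus $\leq 2$ facts, and the known point counts of Bergström--Faber. These are where $\mathsf{S}_{12}$ appears, from $\Mb_{1,11}$, and where $\mathsf{S}_{16}$ appears, from $H^{15}(\Mb_{1,15})$.
\end{itemize}

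The hardest part is the weight-$15$ base cases of low genus, which are not covered by circles. For $g = 1$, I would first try to use Petersen's description of $H^*(\Mb_{1,n})$ through cusp forms of weight $k+1$; for weight $15$ only $\mathsf{S}_{16}$ and twists of $\mathsf{S}_{12}$ arise. For $g = 2$, I would verify that no Siegel form contributes below weight $17$, since $\mathsf{S}_{6,8}$ has weight $17$.
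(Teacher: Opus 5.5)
Your argument follows the route the paper indicates: the Arbarello--Cornalba induction via Lemma \ref{ind}. The base cases $d(g,n)\le 15$ are covered by filled circles, by open circles together with Remark \ref{rem:ckgp} and Lemma \ref{open-ckgp}, and by Petersen's results for $g\le 2$. Every base case with $g\ge 3$, up to $(3,11),(4,9),(5,7),(6,5),(7,3)$, does carry a circle.

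One step is misstated, though. In the open-circle case you combine $\gr_kH^k_c(\M_{g,n})$ with quotients of $\mathsf{L}\otimes H^{k-2}(\widetilde{\partial\M_{g,n}})$, but these pieces come from two different filtrations. The cokernel of $H^k_c(\M_{g,n})\to H^k(\Mb_{g,n})$ embeds in $H^k(\widetilde{\partial\M_{g,n}})$; it is not a Gysin image. Already $H^0(\Mb_{0,4})\neq 0$ is not accounted for by your pieces. The clean fix, which the paper uses in the proof of Proposition \ref{samir8}, is the exact sequence
\[H^{k-2}(\widetilde{\partial\M_{g,n}})\otimes\mathsf{L}\to H^k(\Mb_{g,n})\to W_kH^k(\M_{g,n})\to 0.\]
Here $W_kH^k(\M_{g,n})$ is Tate (and zero for odd $k$) directly by Lemma \ref{open-ckgp}, so no Poincar\'e duality is needed. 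Alternatively, keep $\gr_kH^k_c(\M_{g,n})$, which is Tate by your duality argument, but pair it with a subspace of $H^k(\widetilde{\partial\M_{g,n}})$, known by induction on $(g,n)$.

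Finally, $\mathsf{S}_{6,8}$ has weight $6+2\cdot 8-3=19$, not $17$. This slip is harmless for $k\le 15$.
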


At present, we do not have sufficient information about base cases needed to push Theorem \ref{lowwt} beyond weight $15$. However, known rationality results for small moduli spaces  (which imply vanishing of the spaces of holomorphic forms) have enabled a description of
the Hodge component $H^{17,0}(\Mb_{g,n})$ and, conditional upon the conjectured vanishing of some spaces of holomorphic forms in genus $3$, a description of $H^{19,0}(\Mb_{g,n})$ as well, see \cite{FAmodules}.

\medskip
Conjecture \ref{lc} predicts that for even $k \leq 20$, all cohomology of $\Mb_{g,n}$ is pure Tate. By the Tate conjecture, such classes are expected to be algebraic. Meanwhile, all known examples of non-tautological algebraic cycles, such as those in Theorems \ref{vzx} and \ref{mainx},
occur in degrees $k \geq 22$. This suggests the following conjecutre.
\begin{conj} \label{lowk}
For even $k \leq 20$ and any $g, n$, we have $H^{k}(\Mb_{g,n}) = RH^k(\Mb_{g,n})$.
\end{conj}

The conjecture is known to hold for $k = 2$ \cite[Theorem 1.1]{ArbarelloCornalba}, for $k = 4$, and for $k = 6$ if $g \geq 10$ \cite[Theorem 1.5(1) and (2)]{ste}. The conjecture is also known when $g = 1$ (see Theorem \ref{g1}) and when $g = 2$ \cite[Corollary 2.7]{ste}.
By Hard Lefschetz, Conjecture \ref{lowk} would also imply that $H_k(\Mb_{g,n}) \colonequals H^{2(3g - 3 + n) - k}(\Mb_{g,n})$ is tautological for even $k \leq 20$. As further evidence towards Conjecture \ref{lowk}, \cite[Theorem 1.5(3)]{ste} shows that $H_k(\Mb_{g,n}) = RH_k(\Mb_{g,n})$ for even $k \leq 14$. In these cases where $H_k(\Mb_{g,n}) = RH_k(\Mb_{g,n})$, Poincar\'e duality shows that
Conjecture \ref{lowk} holds if and only the pairing $RH^k(\Mb_{g,n}) \times RH_k(\Mb_{g,n}) \to \qq$ is perfect.

\subsection{Low weight compactly supported cohomology of $\M_{g,n}$} \label{41}
Once $H^k(\Mb_{g,n})$ is understood for all $g$ and $n$, one might hope to use Deligne's weight spectral sequence to calculate $\gr_k H_c^*(\M_{g,n})$ as discussed in Section \ref{wt}. 
Since $H^0(\Mb_{g,n}) = \qq$ for all $g$ and $n$, the weight $0$ cohomology reflects purely combinatorial aspects of the boundary stratification.
This idea was pioneered in the case $k = 0$ by Chan--Galatius--Payne \cite{CGP,CGP2}, where they find that $\gr_0 H_c^*(\M_{g,n})= W_0H^*_c(\M_{g,n})$ is computed by  the cohomology of the genus $g$ part of the standard commutative graph complex with $n$ marked external vertices. Dualizing, they use this to prove at least exponential growth of the top weight cohomology of $\M_g$.

Building upon Theorems \ref{ac2} and \ref{clp11}, the weight $k$ compactly supported cohomology of $\M_{g,n}$ for
 $k = 2$ and $k = 11$ are studied by Payne and Willwacher in \cite{PW} and \cite{PW2} respectively. Additional progress for $k = 11$ was made in \cite{ThomasStudent}. Finally, building upon Theorem \ref{clpw13}, some results on the weight $k = 13$ cohomology are given in \cite[Proposition 5.1]{h13} and \cite{lstw}.

\section{Applications to point counts} \label{sec:pts}

\begin{figure}
\begin{center}
\begin{tikzpicture}[scale=.6]
\filldraw (0, 0) circle (4pt);
\node at (6,0) {$\#\Mb_{g,n}(\ff_q)$ is a polynomial in $q$};
\node[scale = .9, color = red] at (0, -1) {$\boldsymbol{\times}$};
\node at (6.5,-1) {$\#\Mb_{g,n}(\ff_q)$ is not a polynomial in $q$};
\end{tikzpicture}

\vspace{-.2in}
\begin{tikzpicture}[scale = .6]

\node[scale = .85] at (1, -.7) {$1$};
\node[scale = .85] at (2, -.7) {$2$};
\node[scale = .85] at (3, -.7) {$3$};
\node[scale = .85] at (4, -.7) {$4$};
\node[scale = .85] at (5, -.7) {$5$};
\node[scale = .85] at (6, -.7) {$6$};
\node[scale = .85] at (7, -.7) {$7$};
\node[scale = .85] at (8, -.7) {$8$};
\node[scale = .85] at (9, -.7) {$9$};
\node[scale = .85] at (10, -.7) {$10$};
\node[scale = .85] at (11, -.7) {$11$};
\node[scale = .85] at (12, -.7) {$12$};
\node[scale = .85] at (13, -.7) {$13$};
\node[scale = .85] at (14, -.7) {$14$};
\node[scale = .85] at (15, -.7) {$15$};
\node[scale = .85] at (16, -.7) {$16$};
\node[scale = .85] at (17, -.7) {$17$};

\node[scale = .85] at (-.7, 1) {$1$};
\node[scale = .85] at (-.7, 2) {$2$};
\node[scale = .85] at (-.7, 3) {$3$};
\node[scale = .85] at (-.7, 4) {$4$};
\node[scale = .85] at (-.7, 5) {$5$};
\node[scale = .85] at (-.7, 6) {$6$};
\node[scale = .85] at (-.7, 7) {$7$};
\node[scale = .85] at (-.7, 8) {$8$};
\node[scale = .85] at (-.7, 9) {$9$};
\node[scale = .85] at (-.7, 10) {$10$};
\node[scale = .85] at (-.7, 11) {$11$};
\node[scale = .85] at (-.7, 12) {$12$};
\node[scale = .85] at (-.7, 13) {$13$};

\draw[->] (0, 0) -- (18, 0);
\draw[->] (0, 0) -- (0, 14);
\node[scale=1.2] at (18.4,0) {$g$};
\node[scale=1.2] at (0, 14.4) {$n$};
\draw (-.1, 1) -- (.1, 1);
\draw (-.1, 2) -- (.1, 2);
\draw (1, -.1) -- (1, .1);
\draw (2, -.1) -- (2, .1);
\draw (3, -.1) -- (3, .1);
\filldraw (0, 3) circle (4pt);
\filldraw (0, 4) circle (4pt);
\filldraw (0, 5) circle (4pt);
\filldraw (0, 6) circle (4pt);
\filldraw (0, 7) circle (4pt);
\filldraw (0, 8) circle (4pt);
\filldraw (0, 9) circle (4pt);
\filldraw (0, 10) circle (4pt);
\filldraw (0, 11) circle (4pt);
\filldraw (0, 12) circle (4pt);
\filldraw (0, 13) circle (4pt);

\filldraw (1, 1) circle (4pt);
\filldraw (1, 2) circle (4pt);
\filldraw (1, 3) circle (4pt);
\filldraw (1, 4) circle (4pt);
\filldraw (1, 5) circle (4pt);
\filldraw (1, 6) circle (4pt);
\filldraw (1, 7) circle (4pt);
\filldraw (1, 8) circle (4pt);
\filldraw (1, 9) circle (4pt);
\filldraw (1, 10) circle (4pt);

\filldraw (2, 0) circle (4pt);
\filldraw (2, 1) circle (4pt);
\filldraw (3, 0) circle (4pt);

\node[scale = .9, color = red] at (1, 11) {$\boldsymbol{\times}$};
\node[scale = .9, color = red] at (1, 12) {$\boldsymbol{\times}$};
\node[scale = .9, color = red] at (1, 13) {$\boldsymbol{\times}$};

\node[scale = .9, color = red] at (2, 10) {$\boldsymbol{\times}$};
\node[scale = .9, color = red] at (2, 11) {$\boldsymbol{\times}$};
\node[scale = .9, color = red] at (2, 12) {$\boldsymbol{\times}$};
\node[scale = .9, color = red] at (2, 13) {$\boldsymbol{\times}$};

\node[scale = .9, color = red] at (3, 10) {$\boldsymbol{\times}$};
\node[scale = .9, color = red] at (3, 11) {$\boldsymbol{\times}$};
\node[scale = .9, color = red] at (3, 12) {$\boldsymbol{\times}$};
\node[scale = .9, color = red] at (3, 13) {$\boldsymbol{\times}$};

\node[scale = .9, color = red] at (4, 9) {$\boldsymbol{\times}$};
\node[scale = .9, color = red] at (4, 10) {$\boldsymbol{\times}$};
\node[scale = .9, color = red] at (4, 11) {$\boldsymbol{\times}$};
\node[scale = .9, color = red] at (4, 12) {$\boldsymbol{\times}$};
\node[scale = .9, color = red] at (4, 13) {$\boldsymbol{\times}$};

\node[scale = .9, color = red] at (5, 9) {$\boldsymbol{\times}$};
\node[scale = .9, color = red] at (5, 10) {$\boldsymbol{\times}$};
\node[scale = .9, color = red] at (5, 11) {$\boldsymbol{\times}$};
\node[scale = .9, color = red] at (5, 12) {$\boldsymbol{\times}$};
\node[scale = .9, color = red] at (5, 13) {$\boldsymbol{\times}$};

\node[scale = .9, color = red] at (6, 9) {$\boldsymbol{\times}$};
\node[scale = .9, color = red] at (6, 10) {$\boldsymbol{\times}$};
\node[scale = .9, color = red] at (6, 11) {$\boldsymbol{\times}$};
\node[scale = .9, color = red] at (6, 12) {$\boldsymbol{\times}$};
\node[scale = .9, color = red] at (6, 13) {$\boldsymbol{\times}$};

\node[scale = .9, color = red] at (7, 8) {$\boldsymbol{\times}$};
\node[scale = .9, color = red] at (7, 9) {$\boldsymbol{\times}$};
\node[scale = .9, color = red] at (7, 10) {$\boldsymbol{\times}$};
\node[scale = .9, color = red] at (7, 11) {$\boldsymbol{\times}$};
\node[scale = .9, color = red] at (7, 12) {$\boldsymbol{\times}$};
\node[scale = .9, color = red] at (7, 13) {$\boldsymbol{\times}$};

\node[scale =  .9, color = red] at (8, 8) {$\boldsymbol{\times}$};
\node[scale = .9, color = red] at (8, 9) {$\boldsymbol{\times}$};
\node[scale = .9, color = red] at (8, 10) {$\boldsymbol{\times}$};
\node[scale = .9, color = red] at (8, 11) {$\boldsymbol{\times}$};
\node[scale = .9, color = red] at (8, 12) {$\boldsymbol{\times}$};
\node[scale = .9, color = red] at (8, 13) {$\boldsymbol{\times}$};

\node[scale = .9, color = red] at (9, 8) {$\boldsymbol{\times}$};
\node[scale = .9, color = red] at (9, 9) {$\boldsymbol{\times}$};
\node[scale = .9, color = red] at (9, 10) {$\boldsymbol{\times}$};
\node[scale = .9, color = red] at (9, 11) {$\boldsymbol{\times}$};
\node[scale = .9, color = red] at (9, 12) {$\boldsymbol{\times}$};
\node[scale = .9, color = red] at (9, 13) {$\boldsymbol{\times}$};

\node[scale = .9, color = red] at (10, 8) {$\boldsymbol{\times}$};
\node[scale = .9, color = red] at (10, 9) {$\boldsymbol{\times}$};
\node[scale = .9, color = red] at (10, 10) {$\boldsymbol{\times}$};
\node[scale = .9, color = red] at (10, 11) {$\boldsymbol{\times}$};
\node[scale = .9, color = red] at (10, 12) {$\boldsymbol{\times}$};
\node[scale = .9, color = red] at (10, 13) {$\boldsymbol{\times}$};

\node[scale = .9, color = red] at (11, 7) {$\boldsymbol{\times}$};
\node[scale = .9, color = red] at (11, 8) {$\boldsymbol{\times}$};
\node[scale = .9, color = red] at (11, 9) {$\boldsymbol{\times}$};
\node[scale = .9, color = red] at (11, 10) {$\boldsymbol{\times}$};
\node[scale = .9, color = red] at (11, 11) {$\boldsymbol{\times}$};
\node[scale = .9, color = red] at (11, 12) {$\boldsymbol{\times}$};
\node[scale = .9, color = red] at (11, 13) {$\boldsymbol{\times}$};

\node[scale = .9, color = red] at (12, 7) {$\boldsymbol{\times}$};
\node[scale = .9, color = red] at (12, 8) {$\boldsymbol{\times}$};
\node[scale = .9, color = red] at (12, 9) {$\boldsymbol{\times}$};
\node[scale = .9, color = red] at (12, 10) {$\boldsymbol{\times}$};
\node[scale = .9, color = red] at (12, 11) {$\boldsymbol{\times}$};
\node[scale = .9, color = red] at (12, 12) {$\boldsymbol{\times}$};
\node[scale = .9, color = red] at (12, 13) {$\boldsymbol{\times}$};

\node[scale = .9, color = red] at (13, 7) {$\boldsymbol{\times}$};
\node[scale = .9, color = red] at (13, 8) {$\boldsymbol{\times}$};
\node[scale = .9, color = red] at (13, 9) {$\boldsymbol{\times}$};
\node[scale = .9, color = red] at (13, 10) {$\boldsymbol{\times}$};
\node[scale = .9, color = red] at (13, 11) {$\boldsymbol{\times}$};
\node[scale = .9, color = red] at (13, 12) {$\boldsymbol{\times}$};
\node[scale = .9, color = red] at (13, 13) {$\boldsymbol{\times}$};

\node[scale = .9, color = red] at (14, 7) {$\boldsymbol{\times}$};
\node[scale = .9, color = red] at (14, 8) {$\boldsymbol{\times}$};
\node[scale = .9, color = red] at (14, 9) {$\boldsymbol{\times}$};
\node[scale = .9, color = red] at (14, 10) {$\boldsymbol{\times}$};
\node[scale = .9, color = red] at (14, 11) {$\boldsymbol{\times}$};
\node[scale = .9, color = red] at (14, 12) {$\boldsymbol{\times}$};
\node[scale = .9, color = red] at (14, 13) {$\boldsymbol{\times}$};

\node[scale = .9, color = red] at (15, 7) {$\boldsymbol{\times}$};
\node[scale = .9, color = red] at (15, 8) {$\boldsymbol{\times}$};
\node[scale = .9, color = red] at (15, 9) {$\boldsymbol{\times}$};
\node[scale = .9, color = red] at (15, 10) {$\boldsymbol{\times}$};
\node[scale = .9, color = red] at (15, 11) {$\boldsymbol{\times}$};
\node[scale = .9, color = red] at (15, 12) {$\boldsymbol{\times}$};
\node[scale = .9, color = red] at (15, 13) {$\boldsymbol{\times}$};

\node[scale = .9, color = red] at (16, 0) {$\boldsymbol{\times}$};
\node[scale = .9, color = red] at (16, 1) {$\boldsymbol{\times}$};
\node[scale = .9, color = red] at (16, 2) {$\boldsymbol{\times}$};
\node[scale = .9, color = red] at (16, 3) {$\boldsymbol{\times}$};
\node[scale = .9, color = red] at (16, 4) {$\boldsymbol{\times}$};
\node[scale = .9, color = red] at (16, 5) {$\boldsymbol{\times}$};
\node[scale = .9, color = red] at (16, 6) {$\boldsymbol{\times}$};
\node[scale = .9, color = red] at (16, 7) {$\boldsymbol{\times}$};
\node[scale = .9, color = red] at (16, 8) {$\boldsymbol{\times}$};
\node[scale = .9, color = red] at (16, 9) {$\boldsymbol{\times}$};
\node[scale = .9, color = red] at (16, 10) {$\boldsymbol{\times}$};
\node[scale = .9, color = red] at (16, 11) {$\boldsymbol{\times}$};
\node[scale = .9, color = red] at (16, 12) {$\boldsymbol{\times}$};
\node[scale = .9, color = red] at (16, 13) {$\boldsymbol{\times}$};

\node[scale = .9, color = red] at (17, 0) {$\boldsymbol{\times}$};
\node[scale = .9, color = red] at (17, 1) {$\boldsymbol{\times}$};
\node[scale = .9, color = red] at (17, 2) {$\boldsymbol{\times}$};
\node[scale = .9, color = red] at (17, 3) {$\boldsymbol{\times}$};
\node[scale = .9, color = red] at (17, 4) {$\boldsymbol{\times}$};
\node[scale = .9, color = red] at (17, 5) {$\boldsymbol{\times}$};
\node[scale = .9, color = red] at (17, 6) {$\boldsymbol{\times}$};
\node[scale = .9, color = red] at (17, 7) {$\boldsymbol{\times}$};
\node[scale = .9, color = red] at (17, 8) {$\boldsymbol{\times}$};
\node[scale = .9, color = red] at (17, 9) {$\boldsymbol{\times}$};
\node[scale = .9, color = red] at (17, 10) {$\boldsymbol{\times}$};
\node[scale = .9, color = red] at (17, 11) {$\boldsymbol{\times}$};
\node[scale = .9, color = red] at (17, 12) {$\boldsymbol{\times}$};
\node[scale = .9, color = red] at (17, 13) {$\boldsymbol{\times}$};

\filldraw[color=black] (2,2) circle (4pt);
\filldraw[color=black] (2,3) circle (4pt);
\filldraw[color=black] (2,4) circle (4pt);
\filldraw[color=black] (2,5) circle (4pt);
\filldraw[color=black] (2,6) circle (4pt);
\filldraw[color=black] (2,7) circle (4pt);
\filldraw[color=black] (2,8) circle (4pt);
\filldraw[color=black] (2,9) circle (4pt);

\filldraw[color=black] (3,1) circle (4pt);
\filldraw[color=black] (3,2) circle (4pt);
\filldraw[color=black] (3,3) circle (4pt);
\filldraw[color=black] (3,4) circle (4pt);
\filldraw[color=black] (3,5) circle (4pt);
\filldraw[color=black] (3,6) circle (4pt);
\filldraw[color=black] (3,7) circle (4pt);
\filldraw[color=black] (3,8) circle (4pt);
\filldraw[color=black] (3,9) circle (4pt);

\filldraw[color=black] (4,0) circle (4pt);
\filldraw[color=black] (4,1) circle (4pt);
\filldraw[color=black] (4,2) circle (4pt);
\filldraw[color=black] (4,3) circle (4pt);
\filldraw[color=black] (4,4) circle (4pt);
\filldraw[color=black] (4,5) circle (4pt);
\filldraw[color=black] (4,6) circle (4pt);
\filldraw[color=black] (4,7) circle (4pt);

\filldraw[color=black] (5,0) circle (4pt);
\filldraw[color=black] (5,1) circle (4pt);
\filldraw[color=black] (5,2) circle (4pt);
\filldraw[color=black] (5,3) circle (4pt);
\filldraw[color=black] (5,4) circle (4pt);
\filldraw[color=black] (5,5) circle (4pt);

\filldraw[color=black] (6,0) circle (4pt);
\filldraw[color=black] (6,1) circle (4pt);
\filldraw[color=black] (6,2) circle (4pt);
\filldraw[color=black] (6,3) circle (4pt);

\filldraw[color=black] (7,0) circle (4pt);
\filldraw[color=black] (7,1) circle (4pt);

\draw[color=red, thick, ->] (18,14) -- (19,15);
\end{tikzpicture}
\end{center}

\vspace{.3in}

\begin{center}
\begin{tikzpicture}[scale=.6]
\draw[ultra thick] (0, 0) circle (4pt);
\node at (6,0) {$\#\M_{g,n}(\ff_q)$ is a polynomial in $q$};
\node[scale = .9, color = red] at (0, -1) {$\boldsymbol{\times}$};
\node at (6.5,-1) {$\#\Mb_{g,n}(\ff_q)$ is not a polynomial in $q$};
\end{tikzpicture}

\vspace{-.2in}
\begin{tikzpicture}[scale = .6]

\node[scale = .85] at (1, -.7) {$1$};
\node[scale = .85] at (2, -.7) {$2$};
\node[scale = .85] at (3, -.7) {$3$};
\node[scale = .85] at (4, -.7) {$4$};
\node[scale = .85] at (5, -.7) {$5$};
\node[scale = .85] at (6, -.7) {$6$};
\node[scale = .85] at (7, -.7) {$7$};
\node[scale = .85] at (8, -.7) {$8$};
\node[scale = .85] at (9, -.7) {$9$};
\node[scale = .85] at (10, -.7) {$10$};
\node[scale = .85] at (11, -.7) {$11$};
\node[scale = .85] at (12, -.7) {$12$};
\node[scale = .85] at (13, -.7) {$13$};
\node[scale = .85] at (14, -.7) {$14$};
\node[scale = .85] at (15, -.7) {$15$};
\node[scale = .85] at (16, -.7) {$16$};
\node[scale = .85] at (17, -.7) {$17$};

\node[scale = .85] at (-.7, 1) {$1$};
\node[scale = .85] at (-.7, 2) {$2$};
\node[scale = .85] at (-.7, 3) {$3$};
\node[scale = .85] at (-.7, 4) {$4$};
\node[scale = .85] at (-.7, 5) {$5$};
\node[scale = .85] at (-.7, 6) {$6$};
\node[scale = .85] at (-.7, 7) {$7$};
\node[scale = .85] at (-.7, 8) {$8$};
\node[scale = .85] at (-.7, 9) {$9$};
\node[scale = .85] at (-.7, 10) {$10$};
\node[scale = .85] at (-.7, 11) {$11$};
\node[scale = .85] at (-.7, 12) {$12$};
\node[scale = .85] at (-.7, 13) {$13$};

\draw[->] (0, 0) -- (18, 0);
\draw[->] (0, 0) -- (0, 14);
\node[scale=1.2] at (18.4,0) {$g$};
\node[scale=1.2] at (0, 14.4) {$n$};
\draw (-.1, 1) -- (.1, 1);
\draw (-.1, 2) -- (.1, 2);
\draw (1, -.1) -- (1, .1);
\draw (2, -.1) -- (2, .1);
\draw (3, -.1) -- (3, .1);

\filldraw[color=white] (0, 3) circle (4pt);
\filldraw[color=white] (0, 4) circle (4pt);
\filldraw[color=white] (0, 5) circle (4pt);
\filldraw[color=white] (0, 6) circle (4pt);
\filldraw[color=white] (0, 7) circle (4pt);
\filldraw[color=white] (0, 8) circle (4pt);
\filldraw[color=white] (0, 9) circle (4pt);
\filldraw[color=white] (0, 10) circle (4pt);
\filldraw[color=white] (0, 11) circle (4pt);
\filldraw[color=white] (0, 12) circle (4pt);
\filldraw[color=white] (0, 13) circle (4pt);

\draw[ultra thick] (0, 3) circle (4pt);
\draw[ultra thick] (0, 4) circle (4pt);
\draw[ultra thick] (0, 5) circle (4pt);
\draw[ultra thick] (0, 6) circle (4pt);
\draw[ultra thick] (0, 7) circle (4pt);
\draw[ultra thick] (0, 8) circle (4pt);
\draw[ultra thick] (0, 9) circle (4pt);
\draw[ultra thick] (0, 10) circle (4pt);
\draw[ultra thick] (0, 11) circle (4pt);
\draw[ultra thick] (0, 12) circle (4pt);
\draw[ultra thick] (0, 13) circle (4pt);

\draw[ultra thick] (1, 1) circle (4pt);
\draw[ultra thick] (1, 2) circle (4pt);
\draw[ultra thick] (1, 3) circle (4pt);
\draw[ultra thick] (1, 4) circle (4pt);
\draw[ultra thick] (1, 5) circle (4pt);
\draw[ultra thick] (1, 6) circle (4pt);
\draw[ultra thick] (1, 7) circle (4pt);
\draw[ultra thick] (1, 8) circle (4pt);
\draw[ultra thick] (1, 9) circle (4pt);
\draw[ultra thick] (1, 10) circle (4pt);

\node[scale = .9, color = red] at (1, 11) {$\boldsymbol{\times}$};
\node[scale = .9, color = red] at (1, 12) {$\boldsymbol{\times}$};
\node[scale = .9, color = red] at (1, 13) {$\boldsymbol{\times}$};

\node[scale = .9, color = red] at (2, 10) {$\boldsymbol{\times}$};
\node[scale = .9, color = red] at (2, 11) {$\boldsymbol{\times}$};
\node[scale = .9, color = red] at (2, 12) {$\boldsymbol{\times}$};
\node[scale = .9, color = red] at (2, 13) {$\boldsymbol{\times}$};

\node[scale = .9, color = red] at (3, 8) {$\boldsymbol{\times}$};
\node[scale = .9, color = red] at (3, 9) {$\boldsymbol{\times}$};
\node[scale = .9, color = red] at (3, 10) {$\boldsymbol{\times}$};
\node[scale = .9, color = red] at (3, 11) {$\boldsymbol{\times}$};
\node[scale = .9, color = red] at (3, 12) {$\boldsymbol{\times}$};
\node[scale = .9, color = red] at (3, 13) {$\boldsymbol{\times}$};

\node[scale = .9, color = red] at (4, 7) {$\boldsymbol{\times}$};
\node[scale = .9, color = red] at (4, 8) {$\boldsymbol{\times}$};
\node[scale = .9, color = red] at (4, 9) {$\boldsymbol{\times}$};
\node[scale = .9, color = red] at (4, 10) {$\boldsymbol{\times}$};
\node[scale = .9, color = red] at (4, 11) {$\boldsymbol{\times}$};
\node[scale = .9, color = red] at (4, 12) {$\boldsymbol{\times}$};
\node[scale = .9, color = red] at (4, 13) {$\boldsymbol{\times}$};

\node[scale = .9, color = red] at (5, 5) {$\boldsymbol{\times}$};
\node[scale = .9, color = red] at (5, 6) {$\boldsymbol{\times}$};
\node[scale = .9, color = red] at (5, 7) {$\boldsymbol{\times}$};
\node[scale = .9, color = red] at (5, 8) {$\boldsymbol{\times}$};
\node[scale = .9, color = red] at (5, 9) {$\boldsymbol{\times}$};
\node[scale = .9, color = red] at (5, 10) {$\boldsymbol{\times}$};
\node[scale = .9, color = red] at (5, 11) {$\boldsymbol{\times}$};
\node[scale = .9, color = red] at (5, 12) {$\boldsymbol{\times}$};
\node[scale = .9, color = red] at (5, 13) {$\boldsymbol{\times}$};

\node[scale = .9, color = red] at (6, 4) {$\boldsymbol{\times}$};
\node[scale = .9, color = red] at (6, 5) {$\boldsymbol{\times}$};
\node[scale = .9, color = red] at (6, 6) {$\boldsymbol{\times}$};
\node[scale = .9, color = red] at (6, 7) {$\boldsymbol{\times}$};
\node[scale = .9, color = red] at (6, 8) {$\boldsymbol{\times}$};
\node[scale = .9, color = red] at (6, 9) {$\boldsymbol{\times}$};
\node[scale = .9, color = red] at (6, 10) {$\boldsymbol{\times}$};
\node[scale = .9, color = red] at (6, 11) {$\boldsymbol{\times}$};
\node[scale = .9, color = red] at (6, 12) {$\boldsymbol{\times}$};
\node[scale = .9, color = red] at (6, 13) {$\boldsymbol{\times}$};

\node[scale = .9, color = red] at (7, 2) {$\boldsymbol{\times}$};
\node[scale = .9, color = red] at (7, 3) {$\boldsymbol{\times}$};
\node[scale = .9, color = red] at (7, 4) {$\boldsymbol{\times}$};
\node[scale = .9, color = red] at (7, 5) {$\boldsymbol{\times}$};
\node[scale = .9, color = red] at (7, 6) {$\boldsymbol{\times}$};
\node[scale = .9, color = red] at (7, 7) {$\boldsymbol{\times}$};
\node[scale = .9, color = red] at (7, 8) {$\boldsymbol{\times}$};
\node[scale = .9, color = red] at (7, 9) {$\boldsymbol{\times}$};
\node[scale = .9, color = red] at (7, 10) {$\boldsymbol{\times}$};
\node[scale = .9, color = red] at (7, 11) {$\boldsymbol{\times}$};
\node[scale = .9, color = red] at (7, 12) {$\boldsymbol{\times}$};
\node[scale = .9, color = red] at (7, 13) {$\boldsymbol{\times}$};

\node[scale = .9, color = red] at (8, 1) {$\boldsymbol{\times}$};
\node[scale = .9, color = red] at (8, 2) {$\boldsymbol{\times}$};
\node[scale = .9, color = red] at (8, 3) {$\boldsymbol{\times}$};
\node[scale = .9, color = red] at (8, 4) {$\boldsymbol{\times}$};
\node[scale = .9, color = red] at (8, 5) {$\boldsymbol{\times}$};
\node[scale = .9, color = red] at (8, 6) {$\boldsymbol{\times}$};
\node[scale = .9, color = red] at (8, 7) {$\boldsymbol{\times}$};
\node[scale =  .9, color = red] at (8, 8) {$\boldsymbol{\times}$};
\node[scale = .9, color = red] at (8, 9) {$\boldsymbol{\times}$};
\node[scale = .9, color = red] at (8, 10) {$\boldsymbol{\times}$};
\node[scale = .9, color = red] at (8, 11) {$\boldsymbol{\times}$};
\node[scale = .9, color = red] at (8, 12) {$\boldsymbol{\times}$};
\node[scale = .9, color = red] at (8, 13) {$\boldsymbol{\times}$};

\node[scale = .9, color = red] at (9, 0) {$\boldsymbol{\times}$};
\node[scale = .9, color = red] at (9, 1) {$\boldsymbol{\times}$};
\node[scale = .9, color = red] at (9, 2) {$\boldsymbol{\times}$};
\node[scale = .9, color = red] at (9, 3) {$\boldsymbol{\times}$};
\node[scale = .9, color = red] at (9, 4) {$\boldsymbol{\times}$};
\node[scale = .9, color = red] at (9, 5) {$\boldsymbol{\times}$};
\node[scale = .9, color = red] at (9, 6) {$\boldsymbol{\times}$};
\node[scale = .9, color = red] at (9, 7) {$\boldsymbol{\times}$};
\node[scale = .9, color = red] at (9, 8) {$\boldsymbol{\times}$};
\node[scale = .9, color = red] at (9, 9) {$\boldsymbol{\times}$};
\node[scale = .9, color = red] at (9, 10) {$\boldsymbol{\times}$};
\node[scale = .9, color = red] at (9, 11) {$\boldsymbol{\times}$};
\node[scale = .9, color = red] at (9, 12) {$\boldsymbol{\times}$};
\node[scale = .9, color = red] at (9, 13) {$\boldsymbol{\times}$};

\node[scale = .9, color = red] at (10, 0) {$\boldsymbol{\times}$};
\node[scale = .9, color = red] at (10, 1) {$\boldsymbol{\times}$};
\node[scale = .9, color = red] at (10, 2) {$\boldsymbol{\times}$};
\node[scale = .9, color = red] at (10, 3) {$\boldsymbol{\times}$};
\node[scale = .9, color = red] at (10, 4) {$\boldsymbol{\times}$};
\node[scale = .9, color = red] at (10, 5) {$\boldsymbol{\times}$};
\node[scale = .9, color = red] at (10, 6) {$\boldsymbol{\times}$};
\node[scale = .9, color = red] at (10, 7) {$\boldsymbol{\times}$};
\node[scale = .9, color = red] at (10, 8) {$\boldsymbol{\times}$};
\node[scale = .9, color = red] at (10, 9) {$\boldsymbol{\times}$};
\node[scale = .9, color = red] at (10, 10) {$\boldsymbol{\times}$};
\node[scale = .9, color = red] at (10, 11) {$\boldsymbol{\times}$};
\node[scale = .9, color = red] at (10, 12) {$\boldsymbol{\times}$};
\node[scale = .9, color = red] at (10, 13) {$\boldsymbol{\times}$};

\node[scale = .9, color = red] at (11, 0) {$\boldsymbol{\times}$};
\node[scale = .9, color = red] at (11, 1) {$\boldsymbol{\times}$};
\node[scale = .9, color = red] at (11, 2) {$\boldsymbol{\times}$};
\node[scale = .9, color = red] at (11, 3) {$\boldsymbol{\times}$};
\node[scale = .9, color = red] at (11, 4) {$\boldsymbol{\times}$};
\node[scale = .9, color = red] at (11, 5) {$\boldsymbol{\times}$};
\node[scale = .9, color = red] at (11, 6) {$\boldsymbol{\times}$};
\node[scale = .9, color = red] at (11, 7) {$\boldsymbol{\times}$};
\node[scale = .9, color = red] at (11, 8) {$\boldsymbol{\times}$};
\node[scale = .9, color = red] at (11, 9) {$\boldsymbol{\times}$};
\node[scale = .9, color = red] at (11, 10) {$\boldsymbol{\times}$};
\node[scale = .9, color = red] at (11, 11) {$\boldsymbol{\times}$};
\node[scale = .9, color = red] at (11, 12) {$\boldsymbol{\times}$};
\node[scale = .9, color = red] at (11, 13) {$\boldsymbol{\times}$};

\node[scale = .9, color = red] at (12, 0) {$\boldsymbol{\times}$};
\node[scale = .9, color = red] at (12, 1) {$\boldsymbol{\times}$};
\node[scale = .9, color = red] at (12, 2) {$\boldsymbol{\times}$};
\node[scale = .9, color = red] at (12, 3) {$\boldsymbol{\times}$};
\node[scale = .9, color = red] at (12, 4) {$\boldsymbol{\times}$};
\node[scale = .9, color = red] at (12, 5) {$\boldsymbol{\times}$};
\node[scale = .9, color = red] at (12, 6) {$\boldsymbol{\times}$};
\node[scale = .9, color = red] at (12, 7) {$\boldsymbol{\times}$};
\node[scale = .9, color = red] at (12, 8) {$\boldsymbol{\times}$};
\node[scale = .9, color = red] at (12, 9) {$\boldsymbol{\times}$};
\node[scale = .9, color = red] at (12, 10) {$\boldsymbol{\times}$};
\node[scale = .9, color = red] at (12, 11) {$\boldsymbol{\times}$};
\node[scale = .9, color = red] at (12, 12) {$\boldsymbol{\times}$};
\node[scale = .9, color = red] at (12, 13) {$\boldsymbol{\times}$};

\node[scale = .9, color = red] at (13, 0) {$\boldsymbol{\times}$};
\node[scale = .9, color = red] at (13, 1) {$\boldsymbol{\times}$};
\node[scale = .9, color = red] at (13, 2) {$\boldsymbol{\times}$};
\node[scale = .9, color = red] at (13, 3) {$\boldsymbol{\times}$};
\node[scale = .9, color = red] at (13, 4) {$\boldsymbol{\times}$};
\node[scale = .9, color = red] at (13, 5) {$\boldsymbol{\times}$};
\node[scale = .9, color = red] at (13, 6) {$\boldsymbol{\times}$};
\node[scale = .9, color = red] at (13, 7) {$\boldsymbol{\times}$};
\node[scale = .9, color = red] at (13, 8) {$\boldsymbol{\times}$};
\node[scale = .9, color = red] at (13, 9) {$\boldsymbol{\times}$};
\node[scale = .9, color = red] at (13, 10) {$\boldsymbol{\times}$};
\node[scale = .9, color = red] at (13, 11) {$\boldsymbol{\times}$};
\node[scale = .9, color = red] at (13, 12) {$\boldsymbol{\times}$};
\node[scale = .9, color = red] at (13, 13) {$\boldsymbol{\times}$};

\node[scale = .9, color = red] at (14, 0) {$\boldsymbol{\times}$};
\node[scale = .9, color = red] at (14, 1) {$\boldsymbol{\times}$};
\node[scale = .9, color = red] at (14, 2) {$\boldsymbol{\times}$};
\node[scale = .9, color = red] at (14, 3) {$\boldsymbol{\times}$};
\node[scale = .9, color = red] at (14, 4) {$\boldsymbol{\times}$};
\node[scale = .9, color = red] at (14, 5) {$\boldsymbol{\times}$};
\node[scale = .9, color = red] at (14, 6) {$\boldsymbol{\times}$};
\node[scale = .9, color = red] at (14, 7) {$\boldsymbol{\times}$};
\node[scale = .9, color = red] at (14, 8) {$\boldsymbol{\times}$};
\node[scale = .9, color = red] at (14, 9) {$\boldsymbol{\times}$};
\node[scale = .9, color = red] at (14, 10) {$\boldsymbol{\times}$};
\node[scale = .9, color = red] at (14, 11) {$\boldsymbol{\times}$};
\node[scale = .9, color = red] at (14, 12) {$\boldsymbol{\times}$};
\node[scale = .9, color = red] at (14, 13) {$\boldsymbol{\times}$};

\node[scale = .9, color = red] at (15, 0) {$\boldsymbol{\times}$};
\node[scale = .9, color = red] at (15, 1) {$\boldsymbol{\times}$};
\node[scale = .9, color = red] at (15, 2) {$\boldsymbol{\times}$};
\node[scale = .9, color = red] at (15, 3) {$\boldsymbol{\times}$};
\node[scale = .9, color = red] at (15, 4) {$\boldsymbol{\times}$};
\node[scale = .9, color = red] at (15, 5) {$\boldsymbol{\times}$};
\node[scale = .9, color = red] at (15, 6) {$\boldsymbol{\times}$};
\node[scale = .9, color = red] at (15, 7) {$\boldsymbol{\times}$};
\node[scale = .9, color = red] at (15, 8) {$\boldsymbol{\times}$};
\node[scale = .9, color = red] at (15, 9) {$\boldsymbol{\times}$};
\node[scale = .9, color = red] at (15, 10) {$\boldsymbol{\times}$};
\node[scale = .9, color = red] at (15, 11) {$\boldsymbol{\times}$};
\node[scale = .9, color = red] at (15, 12) {$\boldsymbol{\times}$};
\node[scale = .9, color = red] at (15, 13) {$\boldsymbol{\times}$};

\node[scale = .9, color = red] at (16, 0) {$\boldsymbol{\times}$};
\node[scale = .9, color = red] at (16, 1) {$\boldsymbol{\times}$};
\node[scale = .9, color = red] at (16, 2) {$\boldsymbol{\times}$};
\node[scale = .9, color = red] at (16, 3) {$\boldsymbol{\times}$};
\node[scale = .9, color = red] at (16, 4) {$\boldsymbol{\times}$};
\node[scale = .9, color = red] at (16, 5) {$\boldsymbol{\times}$};
\node[scale = .9, color = red] at (16, 6) {$\boldsymbol{\times}$};
\node[scale = .9, color = red] at (16, 7) {$\boldsymbol{\times}$};
\node[scale = .9, color = red] at (16, 8) {$\boldsymbol{\times}$};
\node[scale = .9, color = red] at (16, 9) {$\boldsymbol{\times}$};
\node[scale = .9, color = red] at (16, 10) {$\boldsymbol{\times}$};
\node[scale = .9, color = red] at (16, 11) {$\boldsymbol{\times}$};
\node[scale = .9, color = red] at (16, 12) {$\boldsymbol{\times}$};
\node[scale = .9, color = red] at (16, 13) {$\boldsymbol{\times}$};

\node[scale = .9, color = red] at (17, 0) {$\boldsymbol{\times}$};
\node[scale = .9, color = red] at (17, 1) {$\boldsymbol{\times}$};
\node[scale = .9, color = red] at (17, 2) {$\boldsymbol{\times}$};
\node[scale = .9, color = red] at (17, 3) {$\boldsymbol{\times}$};
\node[scale = .9, color = red] at (17, 4) {$\boldsymbol{\times}$};
\node[scale = .9, color = red] at (17, 5) {$\boldsymbol{\times}$};
\node[scale = .9, color = red] at (17, 6) {$\boldsymbol{\times}$};
\node[scale = .9, color = red] at (17, 7) {$\boldsymbol{\times}$};
\node[scale = .9, color = red] at (17, 8) {$\boldsymbol{\times}$};
\node[scale = .9, color = red] at (17, 9) {$\boldsymbol{\times}$};
\node[scale = .9, color = red] at (17, 10) {$\boldsymbol{\times}$};
\node[scale = .9, color = red] at (17, 11) {$\boldsymbol{\times}$};
\node[scale = .9, color = red] at (17, 12) {$\boldsymbol{\times}$};
\node[scale = .9, color = red] at (17, 13) {$\boldsymbol{\times}$};

\filldraw[color=white] (2,0) circle (4pt);
\draw[ultra thick] (2, 0) circle (4pt);
\draw[ultra thick] (2, 1) circle (4pt);
\draw[ultra thick] (2,2) circle (4pt);
\draw[ultra thick] (2,3) circle (4pt);
\draw[ultra thick] (2,4) circle (4pt);
\draw[ultra thick] (2,5) circle (4pt);
\draw[ultra thick] (2,6) circle (4pt);
\draw[ultra thick] (2,7) circle (4pt);
\draw[ultra thick] (2,8) circle (4pt);
\draw[ultra thick] (2,9) circle (4pt);

\filldraw[color=white] (3,0) circle (4pt);
\draw[ultra thick] (3, 0) circle (4pt);
\draw[ultra thick] (3, 1) circle (4pt);
\draw[ultra thick] (3,2) circle (4pt);
\draw[ultra thick] (3,3) circle (4pt);
\draw[ultra thick] (3,4) circle (4pt);
\draw[ultra thick] (3,5) circle (4pt);
\draw[ultra thick] (3,6) circle (4pt);
\draw[ultra thick] (3,7) circle (4pt);

\filldraw[color=white] (4,0) circle (4pt);
\draw[ultra thick] (4, 0) circle (4pt);
\draw[ultra thick] (4, 1) circle (4pt);
\draw[ultra thick] (4,2) circle (4pt);
\draw[ultra thick] (4,3) circle (4pt);
\draw[ultra thick] (4,4) circle (4pt);
\draw[ultra thick] (4,5) circle (4pt);
\draw[ultra thick] (4,6) circle (4pt);

\filldraw[color=white] (5,0) circle (4pt);
\draw[ultra thick] (5, 0) circle (4pt);
\draw[ultra thick] (5, 1) circle (4pt);
\draw[ultra thick] (5,2) circle (4pt);
\draw[ultra thick] (5,3) circle (4pt);
\draw[ultra thick] (5,4) circle (4pt);

\filldraw[color=white] (6,0) circle (4pt);
\draw[ultra thick] (6, 0) circle (4pt);
\draw[ultra thick] (6, 1) circle (4pt);
\draw[ultra thick] (6,2) circle (4pt);
\draw[ultra thick] (6,3) circle (4pt);

\filldraw[color=white] (7,0) circle (4pt);
\draw[ultra thick] (7, 0) circle (4pt);
\draw[ultra thick] (7, 1) circle (4pt);

\filldraw[color=white] (8,0) circle (4pt);
\draw[ultra thick] (8, 0) circle (4pt);
\node[color = red, scale = 1.5] at (18.5, 14.5) {?};
\draw[->, color=red, thick] (18,0) -- (19,0);
\end{tikzpicture}
\end{center}

\end{figure}

We now turn to the question of when $\#\Mb_{g,n}(\ff_q)$ or $\#\M_{g,n}(\ff_q)$ is given by a polynomial in $q$. In both cases, the tool for showing the point count is polynomial is to show all cohomology is Tate type and apply Lemma \ref{tate}.
Meanwhile, the tool for showing that the point count is not polynomial is to find some odd $k$ for which $\chi_k(\M) \neq 0$ and apply Lemma \ref{chik}. 
Because of the vanishing results for odd $k$ in Theorems \ref{ac2} and \ref{bfp}, to obtain non-vanishing in odd weight we will of course need to take $k > 9$.

For $\M = \Mb_{g,n}$, polynomial point counts are implied by $H^*(\Mb_{g,n}) = RH^*(\Mb_{g,n})$. We represent this with filled circles in the chart below. The positions in the chart below for $\#\Mb_{g,n}(\ff_q)$ that receive filled circles are 
all positions that are filled circles in the chart in Section \ref{sec:taut} together with the pairs listed in Proposition \ref{samir8}.
Meanwhile, since $\Mb_{g,n}$ is smooth and proper, $\chi_k(\Mb_{g,n}) = (-1) \dim H^k(\Mb_{g,n})$.
The non-vanshing odd cohomology classes constructed in  Theorems \ref{odd1} and \ref{odd2} give rise to the red exes
in the first chart for $\#\Mb_{g,n}(\ff_q)$, representing cases where the point count is not polynomial. All positions off the page are also filled with red exes.

For $\M = \M_{g,n}$, it is shown in \cite[Theorem 1.5]{h13} that if $3g + 2n < 25$, then the cohomology of $\M_{g,n}$ is Tate type, and hence these spaces have polynomial point count. These are represented by open circles in the second chart.
Since $\M_{g,n}$ is not proper,
 the proof of this result requires some care. For example, although we know that $\Mb_{2,9}$ has polynomial point count, it does not follow immediately that $\M_{2,9}$ has polynomial point count, as the boundary could have some non-polynomial contribution. One may be concerned that $\Mb_{1,11}$ maps to the boundary of $\Mb_{2,9}$ and $\Mb_{1,11}$ has non-polynomial point count. However, as discussed in Section \ref{equiv}, what matters for such calculations are the \emph{equivariant} point counts of $\Mb_{1,11}$.  Indeed, 
 the gluing map $\Mb_{1,11} \to \Mb_{2,9}$ factors through the $\mathbb{S}_2$-quotient of $\Mb_{1,11}$ that identifies the two possible orders of the points being glued together.
  In this case, even though $H^{11}(\Mb_{1,11}) \neq 0$, it turns out that $H^{11}(\Mb_{1,11})^{\mathbb{S}_{2}} = 0$. This is the reason behind why $\M_{2,9}$ still has polynomial point count.
 In summary, to prove that the cohomology of $\M_{g,n}$ for $3g + 2n < 25$ is Tate type, it is essential to understand \emph{equivariantly} the non-Tate cohomology of the $\Mb_{g',n'}$ with $2g' + n' \leq 2g + n$. 
Another other important input is that $\M_{g,n}$ has the CKgP, which implies by Lemma \ref{open-ckgp} that these non-Tate parts of the cohomology are pushed forward from the boundary, allowing us to understand them explicitly in terms of decorated graphs (with possibly non-tautological decorations).
  
It is expected that the point counts of $\M_{g,n}$ are non-polynomial outside of this range.
\begin{conj} \label{ppc}
$\M_{g,n}$ has polynomial point count if and only if $3g + 2n < 25$.
\end{conj}

In \cite{h13}, Conjecture \ref{ppc} is proved in the case $n = 0$ and computationally verified for all $g + n < 150$. We place a red {\color{red} ?} in the upper right of the chart for $\#\M_{g,n}(\ff_q)$ to indicate that it remains unknown if all spaces off of the page should be given red exes, as predicted by Conjecture \ref{ppc}.

To prove the conjecture for $n = 0$, we study the asymptotic behavior of 
a generating function for $\chi_{11}(\M_{g,n})$ given in \cite{PW2}. In particular, we bound it away from zero for $g > 500$ and compute the rest on a computer to prove that
\[\chi_{11}(\M_{g,0}) \neq 0 \text{ if and only if $g \geq 9$ and $g \neq 12$}.\]
This proves all red exes besides $(g, n) = (12, 0)$ on the $n = 0$ axis of the chart for $\#\M_{g,n}(\ff_q)$. For $(g, n) = (12, 0)$, we have $\chi_{11}(\M_{12,0}) = 0$!
Nevertheless, building upon Theorem \ref{clpw13}, we prove that
 $\chi_{13}(\M_{12,0}) \neq 0$.
Similarly, it turns out that $\chi_{11}(\M_{8,1}) = 0$, but $\chi_{13}(\M_{8,1}) \neq 0$. All other know red exes with $g + n < 150$ are proved by showing $\chi_{11}(\M_{g,n}) \neq 0$ in a computer implementation.

\subsection{Approximate polynomiality}
As a consequence of the chart for $\#\Mb_{g,n}(\ff_q)$, we see that  there are only finitely many cases where $g > 0$ and $\#\Mb_{g,n}(\ff_q)$ is a polynomial in $q$. Nevertheless, we may ask about the overall shape and leading terms in the point count. 
This is determined by the Galois representations that appear in $H^k(\Mb_{g,n})$ for high values of $k$ (which differ from those in complementary low degrees by a Tate twist).
As a corollary of Theorem \ref{lowk}, the point counts of $\Mb_{g,n}$ are surprisingly well-approximated by a polynomial. The first corrections to polynomial behavior are in weight $2\dim \Mb_{g,n} - 11$, but by Theorem \ref{clp11}, only show up when $g = 1$. That is for $g \geq 2$, we have \cite[Corollary 1.3]{h11}
\[\#\Mb_{g,n}(\ff_q) = P(q) + O(q^{\dim \Mb_{g,n} - 13/2}).\]

\subsection{Further point counts in genus $2$ and $3$}
Going beyond the shape of the point counts, Bergstr\"om and Faber have determined a compiled a list of the exact point counts of $\Mb_{g,n}$ and $\M_{g,n}$ for $g  \leq 3$ and many small values of $n$, see \cite{github} and the references therein for an overview of the known results.

To illustrate one of the strategies, used in \cite{BF} in genus $3$, first suppose we knew that $\#\Mb_{g,n}(\ff_q)$ were given by a polynomial in $q$, say
\[\#\Mb_{g,n}(\ff_q) = P(q) = a_d q^d + \ldots + a_1 q + a_0,\]
but we did not know the coefficients $a_i$. 
We could determine the coefficients $a_i$ --- and thus know $\#\Mb_{g,n}(\ff_q)$ for all $q$ --- just
 by knowing $\#\Mb_{g,n}(\ff_q)$ for finitely many values of $q$, as we now explain. Indeed, for each value of $q$ where the point count is known, plugging it in to the equation above yields a linear relation on the $a_i$. With enough independent linear relations among the $a_i$, one can solve the linear system.
There are also some other helpful relations.
Namely, by Lemma \ref{be} and Poincar\'e duality, we have $a_i = a_{d-i}$. Moreover, $\sum_{i} a_i = \chi(\Mb_{g,n})$, and this integer-valued Euler characteristic is computable for small $g$ and $n$, giving another relation. 

In theory, for small $q$ and $g \leq 3$, one can explicitly compute $\#\Mb_{g,n}(\ff_q)$, relying on explicit models of the curves in low genus.
In order to work with contributions from the boundary, as in Section \ref{equiv}, one should actually work with equivariant point counts, which amounts to counting points on twisted forms as well.
 Inducting on the genus and number of markings, one can assume the contribution from the boundary is understood. The contribution from the interior $\M_{g,n}$ (and each twisted form) can be made very explicit in low genus.
For example, in genus $3$, every curve is either hyperelliptic or a smooth plane quartic. For each small $q$, there are only finitely many smooth plane quartics which can be enumerated and studied to obtain these point counts.

Even if we do not know the point count is a polynomial, a similar strategy works if $\dim \Mb_{g,n} \leq 22$ and Conjecture \ref{lc} holds. In this case, the Grothendieck--Lefschetz trace formula implies that $\#\Mb_{g,n}(\ff_q)$ has the shape
\[\#\Mb_{g,n}(\ff_q) = \sum_{\mathsf{V}} a_{\mathsf{V}} \tr(\mathrm{Frob}_q^*| \mathsf{V}),\]
where the sum runs over the Galois representations $\mathsf{V}$ allowed by Conjecture \ref{lc}. Here,
the integers $a_{\mathsf{V}}$ are given by the alternating sum of the multiplicities of $\mathsf{V}$ in the cohomology groups.
For the representations $\mathsf{V}$ allowed by Conjecture \ref{lc}, the trace of Frobenius is determined by coefficients of corresponding modular forms. Thus, in a similar fashion to the previous paragraph, each time $\#\Mb_{g,n}(\ff_q) $ is known for some small value of $q$, it gives rise to a linear relation among the coefficients $a_{\mathsf{V}}$. This strategy is carried out in \cite{BF} to determine the $a_{\mathsf{V}}$ for $g = 3$ and $n \leq 14$. In fact, the equivariant versions of $a_{\mathsf{V}}$ are determined. In the answers presented at the github link above,
the coefficients in the equivariant point count $\#^{\mathbb{S}_n}\Mb_{g,n}$ are sums of irreducible $\mathbb{S}_n$ representations; taking dimensions of these representations would give the coefficients $a_{\mathsf{V}}$.

\begin{rem}
The hypothesis $n \leq 14$ is used in \cite{BF} in order to know that $\Mb_{3,n}$ is unirational. This forces the relations $a_{\mathsf{V}} = 0$ for $\mathsf{V} = \mathsf{S}_k$ or $\mathsf{S}_{j,k}$. Instead, only the Tate twists $\mathsf{L}^i\mathsf{S}_k$ and $\mathsf{L}^i \mathsf{S}_{j,k}$ with $i > 0$ can appear, see \cite[Section 7]{BF}. This means fewer relations from point counts are needed to solve the linear system and determine the $a_{\mathsf{V}}$.
\end{rem}

Although these results rely on Conjecture \ref{lc}, they are known unconditionally in some cases. For example, in genus $2$, Conjecture \ref{lc} (and an appropriate generalization to all weights) follows from work of Petersen \cite{Pg2}. Moreover, by Theorem \ref{lowwt}, Conjecture \ref{lc} holds whenever $\dim \Mb_{g,n} \leq 15$.  In particular, the results for $\#\Mb_{3,n}$ are unconditional for $n \leq 9$. In fact this can be improved further to $n \leq 11$, as we now explain. Using the open circles in the chart in Section \ref{sec:taut} and Lemma \ref{open-ckgp}, all classes in $W_k H^k(\M_{3,n})$ are tautological, hence of Tate type, for $n \leq 11$. Then Petersen's results in genus $2$ ensures that the image of push forward from the boundary is also among the allowed Galois representations, see \cite[Theorem 1.9]{ste} for more details.

\medskip
As a final illustration of how the different invariants and results we have considered relate to each other, we give a proof of Proposition \ref{samir8}.
\begin{proof}[Proof of Proposition \ref{samir8}]
We first prove the claim that $H^*(\Mb_{3,9}) = RH^*(\Mb_{3,9})$.
In \cite[Theorem 8]{C}, it is shown that all even cohomology of $\Mb_{3,9}$ is tautological. However, the results of \cite{BF}, show that $\#\Mb_{3,9}(\ff_q)$ is a polynomial in $q$. As discussed above, this result is unconditonal when $n = 9$.
 Hence by Theorem \ref{be}, all odd cohomology vanishes.

Now consider $\Mb_{4,7}$. We have an exact sequence
\[H^{k-2}(\widetilde{\partial \M_{4,7}}) \to H^{k}(\Mb_{4,7}) \to W_k H^k(\M_{4,7}).\]
 By the open circle in the chart in Section \ref{sec:taut} and Remark \ref{rem:ckgp}, combined with Lemma \ref{open-ckgp}, we see that $W_k H^k(\M_{4,7})$ is generated by tautological classes. Thus, it suffices to show that all classes pushed forward from the boundary are tautological. We have just shown that $\Mb_{3,9}$  has tautological cohomology ring. Now consider a separating boundary divisor which is the image of a gluing map
 $\Mb_{g_1,n_1+1} \times \Mb_{g_2,n_2+1} \to \Mb_{4, 7}$. Every possible $(g_i, n_i+1)$ occurs in a position where there is already a filled circle in the chart in Section \ref{sec:taut}.
 
 The proofs for $(g,n) = (5,5), (6,3), (7,1)$ all proceed similarly. An open circle ensures all classes in $W_k H^k(\M_{g,n})$ are tautological. The previous case of the proposition
ensures classes from the irreducible boundary divisor are tautological. All separating boundary divisors are the images of products of moduli spaces that are already known to have tautological cohomology rings.
\end{proof}

\end{document}